\documentclass[10pt,a4paper]{article}
\usepackage[utf8]{inputenc}
\usepackage{amsmath}
\usepackage{amsfonts}
\usepackage{amssymb}
\usepackage{amsthm}
\usepackage{color}
\usepackage{enumerate}
\usepackage{marvosym}
\usepackage{hyperref}
\usepackage{comment}
\usepackage{multicol}
\usepackage{slashed}
\usepackage{bbm}
%\usepackage{fourier}
%
%
% THEOREM Environments (Examples)-----------------------------------------
%
 \newtheorem{thm}{Theorem}[section]

 \newtheorem*{thm*}{Theorem}
 \newtheorem{cor}[thm]{Corollary}
 \newtheorem{lem}[thm]{Lemma}
 \newtheorem{prop}[thm]{Proposition}
 \theoremstyle{definition}
 \newtheorem{defn}[thm]{Definition}
\newtheorem{hyp}[thm]{Hypothesis}
 \newtheorem{nota}[thm]{Notation}
 \theoremstyle{remark}
 \newtheorem{heur}[thm]{Heuristic}
 \newtheorem{rem}[thm]{Remark}

 \numberwithin{equation}{section}
 
\newcommand{\vertiii}[1]{{\left\vert\kern-0.25ex\left\vert\kern-0.25ex\left\vert #1
  \right\vert\kern-0.25ex\right\vert\kern-0.25ex\right\vert}}

\newcommand{\p}{\partial}

\newcommand{\Tr}{\operatorname{Tr}}
\newcommand{\tr}{\operatorname{tr}}
\newcommand{\dom}{\operatorname{dom}}
\newcommand{\dif}{\operatorname{Dif}}

\newcommand{\ind}{\operatorname{ind}}

\newcommand{\Id}{\operatorname{d}}
\newcommand{\di}{\slashed{\p}}

\newcommand{\IN}{\mathbb{N}}
\newcommand{\IR}{\mathbb{R}}
\newcommand{\IC}{\mathbb{C}}
\newcommand{\Ii}{\mathbbm{i}}
\newcommand{\one}{\mathbbm{1}}

\newcommand{\VERT}[1]{{\left\vert\kern-0.25ex\left\vert\kern-0.25ex\left\vert #1 
    \right\vert\kern-0.25ex\right\vert\kern-0.25ex\right\vert}}

\begin{document}
\title{Higher order spectral shift of Euclidean Callias operators}
\author{Oliver F\"{u}rst}
\date{\today}

\maketitle
%%%%%%%%%%%%%%%%%%%%%%%%%%%%%%%%%%%%%%%%%%%%%%%%%%%%%%%%%%%%%%%%%%%%%%%%%%%%%%%%%%%%%%%%%%%%%%
\begin{abstract}
    We consider Dirac-Schrödinger operators over odd-dimensional Euclidean space. The conditions for the potential are based on those of C. Callias in his famous paper on the corresponding index problem. However, we treat the case where the potential can take values in unbounded operators of a separable Hilbert space, and crucially, we also do not assume that the potential needs to be invertible outside a compact region. Hence, the Dirac-Schrödinger operator is not necessarily Fredholm. In the setup we discuss, it however still admits a related trace formula in terms of the underlying potential.

    In this paper we express the trace formula for these Callias-type operators in terms of higher order spectral shift functions, leading to a functional equation which generalizes a known functional equation found first by A. Pushnitski.

    To the knowledge of the author, this paper presents the first multi-dimensional non-Fredholm extension of the Callias index theorem involving higher order spectral shift functions. More precisely, we also show that under a Lebesgue point condition on the higher order spectral shift function associated to the potential, the Callias-type operator admits a regularized index, even in non-Fredholm settings. This corresponds to a known Witten index result in the one-dimensional case shown by A. Carey et al.    
    
    The regularized index that we introduce is a minor extension of the classical Witten index, and we present an index formula, which generalizes the classical Callias index theorem. As an example, we treat the case of $(d+1)$-massless Dirac-Schr\"odinger operators, for which we calculate the associated higher order spectral shift functions.
\end{abstract}

\section{Introduction}

The main goal of this paper is to generalize the classical Callias index theorem to the non-Fredholm case by regularizing both sides of the index formula through higher order spectral shift functions. Moreover we present a functional equation for the involved spectral shift functions. Before we proceed in more detail, we begin with some historical background.

C. Callias considered in his paper on axial anomalies \cite{Cal} the index problem for a Dirac-Schr\"odinger operator $D$ with matrix-valued potential $A$ in odd dimension $d$, i.e.
\begin{align}
	D=\Ii c\nabla\otimes\one_{\IC^{m}}+\one_{\IC^{r}}\otimes M_{A},
\end{align}
where $M_{A}$ is the multiplication operator in $L^{2}(\IR^{d},\IC^{m})$ induced by the matrix function $\IR^{d}\ni x\mapsto A(x)\in\IC^{m\times m}$, and
\begin{align}
    c\nabla=\sum_{i=1}^{d}c^{i}\p_{x^{i}},
\end{align}
where $(c^{i})_{i=1}^{d}$ are Clifford matrices of rank $r=2^{\frac{d-1}{2}}$, i.e. for $i,j=1,\ldots,d$,
\begin{align}
    c^{i}c^{j}+c^{j}c^{i}=-2\delta_{ij}\one_{\IC^{r}}.
\end{align}
The central result of Callias' paper is the following index theorem (\cite{Cal}[Theorem 2]): Let $A$ be a smooth family of $m\times m$-matrices with $\left|A\left(x\right)\right|\geq C>0$ for all $\left|x\right|\geq c$ for some constants $c$, $C$. Assume that $A$ is asymptotically homogeneous of order $0$ as $x\to\infty$. Then $D$ is Fredholm, and, with $U\left(x\right):=A\left(x\right)\left|A\left(x\right)\right|^{-1}$, $k=\frac{d-1}{2}$,
	\begin{align}
		\ind D=\frac{1}{2 k!}\left(\frac{\Ii}{8\pi}\right)^{k}\lim_{R\to\infty}\int_{S_{R}}\Tr_{\IC^{m}}U\left(\Id U\right)^{\wedge\left(d-1\right)},
	\end{align}
here, $S_{R}$ denotes the $(d-1)$-sphere of radius $R$ around $0$, and $\left(\Id U\right)^{\wedge\left(d-1\right)}$ the $(d-1)$-fold exterior product of (matrix-)one-forms $\Id U$. There are some issues with the original proof (for example, trace-class membership of involved operators), which, however, have been rectified by F. Gesztesy and M. Waurick in \cite{GesWau}, where the authors also generalize some of the conditions.

Since the publication of Callias' paper, a plethora of generalizations have emerged in the Fredholm case, for which we suggest the following incomplete (and subjective) list of papers: \cite{BotSee}, \cite{Ang1}, \cite{BruMos}, \cite{Ang2}, \cite{Rad}, \cite{Bun},\cite{Kuc}, \cite{CarNis}, \cite{Kot}, \cite{BraShi}, \cite{Shi}, \cite{Dun}, \cite{Cec}, \cite{SchSto}.

In any case, some condition of invertibility of the potential outside a compact region is necessary in the Fredholm case. In this paper, we crucially suspend this assumption. In this case, one still retains a trace formula for the semi-group difference,
\begin{align}
    \Tr_{L^{2}(\IR^{d},H)}\tr_{\IC^{r}}\left(e^{-tD^{\ast}D}-e^{-tDD^{\ast}}\right),
\end{align}
where $H$ is a complex separable Hilbert space, which takes the role of $\IC^{m}$. This trace formula, formulated in terms of the underlying potential $A$ (allowed to be unbounded operator-valued), is the central result of \cite{Furst1}, which we shall discuss in more detail below.

In the one-dimensional case $d=1$, where Callias type index theorems correspond to the famous "Index=Spectral Flow"-theorem, there is already a comprehensive body of work (\cite{Push}, \cite{GLMST}, \cite{CGLS}, \cite{CarGesLevNicSukZan} to give a (incomplete) selection of the main results) dealing with the non-Fredholm case. The key to an invariant description beyond comparative trace formulas in terms of resolvents or semigroups is the translation into a functional equation of two spectral shift functions. The first result of this kind is due to A. Pushnitski in \cite{Push}[Theorem 1.1], which we quote and paraphrase here: Let $(A(t))_{t\in\IR}$ be a family of self-adjoint operators in $H$. If $A_{\pm}=\lim_{t\to\pm\infty}A(t)$ exist in an appropriate sense, and
\begin{align}
    \int_{\IR}\left\|A'(t)\right\|_{S^{1}(H)}\Id t<\infty,
\end{align}
then there exist unique\footnote{We omit the normalization procedure here, since spectral shift functions are generally just unique up to a constant.} Krein spectral shift functions of the pairs $(A_{+},A_{-})$ and $(DD^{\ast}, D^{\ast}D)$. The Krein spectral shift function $\xi_{T,S}:\IR\rightarrow\IR$ of a suitable pair of self-adjoint operators $(T,S)$ has the property that for admissible functions $f:\IR\rightarrow\IC$,
\begin{align}
    \Tr(f(T)-f(S))=\int_{\IR}f'(\lambda)\xi_{T,S}(\lambda)\Id\lambda.
\end{align}
The spectral shift functions then satisfy the functional equation
\begin{align}\label{eq:intro:2}
    \xi_{DD^{\ast}, D^{\ast}D}(\lambda)=\frac{1}{\pi}\int_{-\sqrt{\lambda}}^{\sqrt{\lambda}}\xi_{A_{+},A_{-}}(s)\frac{\Id s}{\sqrt{\lambda-s^{2}}},
\end{align}
for almost every $\lambda>0$. In \cite{CGLS}[Theorem 4.3] the authors show that if $\xi_{A_{+},A_{-}}$ admits a left- and right Lebesgue point at $0$, then also $\xi_{DD^{\ast},D^{\ast}D}$ admits a right Lebesgue point at $0$, and
\begin{align}\label{eq:intro:3}
    \xi_{DD^{\ast},D^{\ast}D}(0+)=\frac{\xi_{A_{+},A_{-}}(0-)+\xi_{A_{+},A_{-}}(0+)}{2}.
\end{align}
Moreover, it is shown that the value at the right Lebesgue point $0$, the value $\xi_{DD^{\ast},D^{\ast}D}(0+)$ gives an invariant description of the Witten index $\ind_{W}D$, which was introduced in \cite{GesSim}, independent of the chosen regularization (resolvent, semi-group). Since the Witten index agrees with the Fredholm index if both indices exist, and the spectral flow of a Fredholm family coincides with the spectral shift function of its endpoints (again, if both objects exist), formula (\ref{eq:intro:3}) can be regarded as a non-Fredholm extension of the "Index=Spectral Flow"-theorem.

Since we also have a principal trace formula in the other cases $d\geq 3$ odd\footnote{For $d$ even the trace is always trivial.}, presented in \cite{Furst1}[Theorem 6.5], it is suggestive that it should be the starting point to develop a framework centred around spectral shift functions, allowing us to rewrite the trace formula invariantly through a functional equation like (\ref{eq:intro:2}).

For the sake of this introduction we present the principal trace formula without the precise conditions on the potential $A$, which takes values in the self-adjoint operators of a separable Hilbert space $H$ with common domain $\dom A_{0}$ of a given self-adjoint "model operator" $A_{0}$ (In the one-dimensional case discussed above one simply puts $A_{0}=A_{-}$). Under the assumptions made in \cite{Furst1}[Hypothesis 2.11] on $A=A_{0}+B$ we have for $t>0$,
\begin{align}\label{eq:intro:1}
    &\Tr_{L^2(\IR^{d},H)}\tr_{\IC^{r}}\left(e^{-tD^{\ast}D}-e^{-tDD^{\ast}}\right)\nonumber\\
    =&\frac{2}{d}(4\pi)^{-\frac{d}{2}}t^{\frac{d}{2}}\int_{\IR^{d}}\int_{s\in\Delta_{d-1}}\Tr_{\IC^{r}\otimes H}\left(\prod_{j=0}^{d-1}\left(\Ii c\nabla A_{\phi}\right)(x)e^{-ts_{j}A_{\phi}^{2}(x)}\right)\Id s~\Id x,
\end{align}
where $\phi\in C_{c}^{\infty}(\IR^{d})$, such that $\phi\equiv 1$ near $0$ is arbitrary and $A_{\phi}:=A_{0}+(1-\phi)(A-A_{0})$. $\Delta_{d-1}$ is the $(d-1)$-dimensional simplex in $\IR^{d}$. In particular, the traces are independent of the choice of $\phi$.

Let us make some heuristic observations on the formula (\ref{eq:intro:1}), which also serves as an outline of the strategy of this paper. We first note that the left hand side looks like the usual trace comparison of the semi-groups of $D^{\ast}D$ and $DD^{\ast}$, which one would be tempted to re-express via the classical Krein spectral shift function of the pair. Unfortunately, taking the partial trace $\tr_{\IC^{r}}$ first is essential, since in all interesting cases\footnote{The necessary decay condition on $\nabla A$ to obtain the trace-class property of the semi-group difference (cf. \cite{Furst2}) then would automatically give a trivial trace if (\ref{eq:intro:1}) is rewritten by Stokes theorem to a version, where one takes limits $R\to\infty$ of integrals over $S_{R}(0)$.} $e^{-tD^{\ast}D}-e^{-tDD^{\ast}}$ is not trace-class in the total space $L^2(\IR^{d},\IC^{r}\otimes H)$.
However, if we develop $e^{-tD^{\ast}D}$, respectively $e^{-tDD^{\ast}}$, into a non-commutative Taylor series around the self-adjoint operator $H_{B}=\frac{1}{2}(D^{\ast}D+DD^{\ast})$, it turns out that due to the trace properties of Clifford matrices, the first Taylor summands up to order $(d-1)$ will be annihilated by $\tr_{\IC^{r}}$, and only the $d$-th Taylor remainders will survive. This opens the door to the framework of higher order spectral shift functions, which are precisely the densities of functionals
\begin{align}
    f^{(d)}\mapsto\Tr\left(f(T+S)-\sum_{j=0}^{d-1}\frac{1}{j!}\p_{s=0}^{j}f(T+sS)\right),
\end{align}
for appropriate functions $f$ on $\IR$, and self-adjoint operators $T,S$. The existence of these kinds of spectral shift functions were conjectured by L. Koplienko in \cite{Kop}, which has been established by D. Potapov, A. Skripka and F. Sukochev in \cite{PotSkrSuk}. The existence result has been extended to the case of relative Schatten-von Neumann operators $S$ with respect to $T$ in \cite{NulSkr}. Of course in case $d=1$ one retains the classical Krein spectral shift function. It turns out that if we modify the approach in \cite{PotSkrSuk} similar to the modification in \cite{NulSkr}, we are able to construct a spectral shift function $\xi$, such that
\begin{align}
    \Tr_{L^{2}(\IR^{d},H)}\tr_{\IC^{r}}\left(f(D^{\ast}D)-f(DD^{\ast})\right)=\int_{0}^{\infty}f^{(d)}(\lambda)\xi(\lambda)\Id\lambda,
\end{align}
holds for appropriate functions $f$. Its precise formulation is the content of Proposition \ref{prop:leftspectralshift} and Theorem \ref{thm:pushfunctional}.

Let us proceed with the heuristic consideration of the right hand side of (\ref{eq:intro:1}). Disregarding the integral over $x\in\IR^{d}$ for the moment, we note that by applying the trace the integrand can be re-expressed through a multiple operator integral with the $d$-fold divided difference of the function $x\mapsto e^{-tx}$ as a symbol, and the $(d+1)$-fold copy of the spectral measures of $A_{\phi}^{2}(x)$ as spectral measures. This multiple operator integral is then applied to the vector of $d$ copies of the operator $\Ii c\nabla A_{\phi}$ and the trace is taken. Such a type of multiple operator integral is again closely related to higher order spectral shift functions, and it will turn out that we will be able to construct a spectral shift function $\eta$, such that
\begin{align}
    \int_{\IR^{d}}\Tr_{\IC^{r}\otimes H}\mathcal{J}\left(\dif_{d}f,A_{\phi}^{2}(x),\Ii c\nabla A_{\phi}(x)\right)\Id x=\int_{0}^{\infty}f^{(d)}(\lambda)\eta(\lambda)\Id\lambda,
\end{align}
where $\mathcal{J}\left(\dif_{d}f,A_{\phi}^{2}(x),\Ii c\nabla A_{\phi}(x)\right)$ denotes the multiple operator integral with the $d$-fold divided difference of a suitable function $f$ as a symbol and $d+1$ copies of the spectral measure of $A_{\phi}^{2}(x)$ as spectral measures, which is applied to the vector of $d$ copies of the operator $\Ii c\nabla A_{\phi}(x)$. The precise statement is the content of Proposition \ref{prop:spectralcallias}, and in the next chapter we shall give a short introduction to the notion of multiple operator integrals more generally.

If we now rewrite both sides of (\ref{eq:intro:1}) in terms of the $d$-order spectral shift functions $\xi$, respectively $\eta$, we will essentially obtain a functional equation for the Laplace transforms $\mathcal{L}(\xi)$, and $\mathcal{L}(\eta)$, which after inversion yields the first principal result of this paper, the functional equation
\begin{align}\label{eq:intro:4}
    \xi(\lambda)=-\frac{\left(\frac{d-1}{2}\right)!}{\pi^{\frac{d+1}{2}}(d-1)!}\int_{0}^{\lambda}(\lambda-\mu)^{\frac{d}{2}-1}\eta(\mu)\Id\mu.
\end{align}
The precise statement is given in Theorem \ref{thm:pushfunctional}. In the last chapter we will show that this formula is compatible with Pushnitski's formula (\ref{eq:intro:2}) in the one-dimensional case, if one notes that the spectral shift function $\eta$ we introduced is in case $d=1$ a "symmetrization" of the Krein spectral shift function $\xi_{A_{+},A_{-}}$:
\begin{align}
    \eta(\lambda)=\frac{\xi_{A_{+},A_{-}}(\sqrt{\lambda})+\xi_{A_{+},A_{-}}(-\sqrt{\lambda})}{2\sqrt{\lambda}},\lambda>0.
\end{align}
Equation \ref{eq:intro:4} can therefore be regarded as the extension of Pushnitski's formula to Euclidean Callias type operators of arbitrary odd dimension.

It is straightforward to see from (\ref{eq:intro:4}) that $\xi$ is in general $\frac{d-1}{2}$-times weakly differentiable, which is the content of Corollary \ref{cor:spectralshiftregular}. However, if we assume that $\xi$ is even $(d-1)$-times weakly differentiable, we may define the notion of a regularized index in the style of the classical Witten index. We call it \textit{partial Witten index}, since it is a minor modification of the classical definition induced by the presence of the partial trace $\tr_{\IC^{r}}$. We quote from Definition \ref{defn:partialwitten}:

\begin{defn}
    Assume Hypothesis\footnote{A collection of conditions which ensures the existence of $\xi$.} \ref{hyp:smoothhyp}. Assume that the spectral shift function $\xi$ is $(d-1)$-times weakly differentiable with $\xi^{(d-1)}\in e^{tX}L^{1}(\IR)$ for some $t>0$. Furthermore assume that $\xi^{(d-1)}$ admits a right Lebesgue point at $0$. Then define the \textit{partial Witten index}
    \begin{align}
        \ind_{W}D_{B}:=-\xi^{(d-1)}(0+).
    \end{align}
\end{defn}

It is not difficult to show that if $D_{B}$ is also Fredholm, then $\ind_{W}D_{B}=\ind D_{B}$, the usual Fredholm index of $D_{B}$, which is the content of Lemma \ref{lem:fredholmwitten}.

It remains to be discussed under which conditions on the underlying potential $A$ the operator $D_{B}$ admits a partial Witten index, which would provide the appropriate replacement for "$A$ needs to be invertible outside a compact region" in the classical Fredholm case to our regularized setup. In this case one would like to have an index formula in terms of the spectral shift function $\eta$. This is the content of the second main result of this paper given in Theorem \ref{thm:indexformula}:

\begin{thm}
    Assume Hypothesis \ref{hyp:smoothhyp}. Assume that the spectral shift function $\eta$ is $\frac{d-1}{2}$-times weakly differentiable with $\eta^{(j)}(0)=0$ for all $0\leq j\leq\frac{d-3}{2}$. Then $\xi$ is $(d-1)$-times weakly differentiable such that for a.e. $\lambda>0$,
    \begin{align}
        \xi^{(d-1)}(\lambda)=-\frac{1}{\pi}(4\pi)^{-\frac{d-1}{2}}\int_{0}^{\lambda}(\lambda-\mu)^{-\frac{1}{2}}\eta^{(\frac{d-1}{2})}(\mu)\Id\mu.
    \end{align}
    If additionally $\eta^{(\frac{d-1}{2})}\in e^{tX}L^{1}(\IR)$ for some $t>0$, and the function $u\mapsto u\eta^{(\frac{d-1}{2})}(u^{2})$ admits a right Lebesgue point at $0$ with value $L$, i.e.
    \begin{align}
        \lim_{h\searrow 0}\int_{0}^{h}\left(u\eta^{(\frac{d-1}{2})}(u^{2})-L\right)\Id u=0,
    \end{align}
    then also $\xi^{(d-1)}$ admits a right Lebesgue point at $0$, and
    \begin{align}
        \ind_W D_{B}=-\xi^{(d-1)}(0+)=(4\pi)^{-\frac{d-1}{2}}L.
    \end{align}
\end{thm}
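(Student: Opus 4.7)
The plan is to start from the functional equation
\begin{equation*}
    \xi(\lambda) = -\frac{((d-1)/2)!}{\pi^{(d+1)/2}(d-1)!}\int_0^\lambda (\lambda-\mu)^{d/2-1}\eta(\mu)\,\Id\mu
\end{equation*}
of Theorem \ref{thm:pushfunctional} and to transfer $d-1$ derivatives from $\xi$ onto $\eta$ in two stages. First I would integrate by parts $(d-1)/2$ times. At each step the right boundary term at $\mu=\lambda$ vanishes because the exponent of $(\lambda-\mu)$ remains strictly positive, while the left boundary term at $\mu=0$ is killed by the hypothesis $\eta^{(j)}(0)=0$ for $0\le j\le (d-3)/2$. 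Iterating produces
\begin{equation*}
    \xi(\lambda) = -\frac{((d-1)/2)!}{\pi^{(d+1)/2}(d-1)!}\cdot\frac{\Gamma(d/2)}{\Gamma(d-1/2)}\int_0^\lambda(\lambda-\mu)^{d-3/2}\eta^{((d-1)/2)}(\mu)\,\Id\mu .
\end{equation*}

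Next I would differentiate this representation $d-1$ times in $\lambda$. The first $d-2$ differentiations can be carried out under the integral sign without boundary contributions, since the exponent of $(\lambda-\mu)$ stays strictly positive, reducing it from $d-3/2$ down to $1/2$. The final differentiation produces the weakly singular kernel $(\lambda-\mu)^{-1/2}$; this is locally integrable, so dominated convergence again justifies differentiation under the integral. Gathering the accumulated prefactor and simplifying via the Legendre duplication formula collapses it to the stated constant $-\pi^{-1}(4\pi)^{-(d-1)/2}$, yielding the first identity.

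For the index formula I would substitute $\mu=u^2$ in the resulting expression, obtaining
\begin{equation*}
    \xi^{(d-1)}(\lambda) = -\frac{2}{\pi}(4\pi)^{-(d-1)/2}\int_0^{\sqrt\lambda}\frac{F(u)}{\sqrt{\lambda-u^2}}\,\Id u, \qquad F(u):= u\,\eta^{((d-1)/2)}(u^2).
\end{equation*}
Decomposing $F = L + (F-L)$, the constant part yields exactly $L\pi/2$ via the substitution $u = \sqrt\lambda\sin\theta$, contributing $-(4\pi)^{-(d-1)/2}L$ in the limit. For the remainder I would split the $\theta$-integral at some small $\varepsilon>0$: on the inner piece $[0,\varepsilon]$ the substitution reduces matters to an integral of $F-L$ against a kernel bounded by $1/(\sqrt\lambda\cos\varepsilon)$ over $[0,\sqrt\lambda\sin\varepsilon]$, which is controlled by the Lebesgue-point hypothesis on $F$; on the outer piece $[\varepsilon,\pi/2]$ the integrand stays uniformly away from the endpoint singularity and is handled via the $e^{tX}L^1$ integrability of $\eta^{((d-1)/2)}$ together with dominated convergence.

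The main technical obstacle is precisely this last step: converting the weak right-Lebesgue-point hypothesis on $F$ into vanishing of the Abel-weighted remainder as $\lambda\searrow 0$. This is the higher-dimensional analogue of the Abel-transform argument used in \cite{CGLS}[Theorem 4.3] for the one-dimensional case; the care lies in handling the endpoint singularity at $u=\sqrt\lambda$ via a two-scale $\varepsilon$-first, $\lambda$-second limiting procedure, after which the identification $\ind_W D_B = -\xi^{(d-1)}(0+) = (4\pi)^{-(d-1)/2}L$ follows immediately from the definition of the partial Witten index.
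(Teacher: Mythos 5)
The first half of your argument is sound and amounts to the same computation as the paper's: integrating by parts $\tfrac{d-1}{2}$ times in the functional equation, with the boundary terms killed by $\eta^{(j)}(0)=0$, is exactly the statement that the extension of $\eta$ by zero to $\IR_{<0}$ is $\tfrac{d-1}{2}$-times weakly differentiable, which is how the paper passes the derivatives through the convolution $X_{+}^{k+\frac12}\ast\eta$; your constant bookkeeping via the duplication formula also checks out and reproduces $-\pi^{-1}(4\pi)^{-\frac{d-1}{2}}$.

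The second half has a genuine gap. You are trying to prove the \emph{pointwise} limit $\xi^{(d-1)}(\lambda)\to-(4\pi)^{-\frac{d-1}{2}}L$ as $\lambda\searrow 0$, but the theorem only claims (and only can claim) a right \emph{Lebesgue point}. Your two-scale splitting fails on the outer piece $\theta\in[\varepsilon,\pi/2]$: there the integral is $\int_{\varepsilon}^{\pi/2}\bigl(F(\sqrt{\lambda}\sin\theta)-L\bigr)\Id\theta$, and dominated convergence would require $F(\sqrt{\lambda}\sin\theta)\to L$ pointwise, which is not implied by the Lebesgue-point hypothesis ($F$ need not have a limit at $0$ at all). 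Equivalently, in the $u$-variable this piece integrates $F-L$ over $[\sqrt{\lambda}\sin\varepsilon,\sqrt{\lambda}]$ against the kernel $(\lambda-u^{2})^{-1/2}$, which blows up precisely at the endpoint $u=\sqrt{\lambda}$; an integration by parts against the primitive $G(u)=\int_{0}^{u}(F-L)$ produces the boundary term $G(\sqrt{\lambda})/\sqrt{\lambda-u^{2}}\big|_{u\to\sqrt{\lambda}}$, which diverges. No rearrangement of $\varepsilon$-first, $\lambda$-second limits can rescue this, because the pointwise statement is simply false in general under the stated hypotheses. The paper's proof avoids the singular kernel entirely by averaging first: computing $\frac{1}{h}\int_{0}^{h}\xi^{(d-1)}(\lambda)\Id\lambda$ and applying Fubini replaces $(\lambda-u^{2})^{-1/2}$ by the \emph{bounded} kernel $\frac{2}{h}(h-u^{2})^{1/2}\leq 2h^{-1/2}$ on $[0,\sqrt{h}]$, whose total mass is the constant that produces $L$, after which the Lebesgue-point hypothesis on $F$ controls the remainder directly. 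You should restructure the final step around this averaged (Fubini) identity rather than the pointwise one.
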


We close the paper with the inclusion of a non-trivial example, where $H=L^2(\IR,G)$, and $A(x)=A_{0}+B(x)=\Ii\p_{y}+M_{V(x,\cdot)}$ is itself a family of first order differential operators on $\IR$ induced by a potential $V:\IR^{d}\times\IR\rightarrow B_{sa}(G)$. The associated Callias type operator $D_{B}$ is then called "$(d+1)$-massless Dirac-Schr\"odinger operator" and a partial Witten index formula in this case has already been found in \cite{Furst3}. Here, we extend this result by giving a formula for both spectral shift functions $\xi$ and $\eta$. Interestingly both $\xi$ and $\eta$ are given as integral kernel transforms of an "index density function\footnote{The name is chosen due to the fact that if it is integrated over $\IR^{d}$ it gives the partial Witten index of $D_{B}$ up to a dimensional constant.}" $\IR^{d}\ni z\mapsto\ind_{V}(z)\in\IC$.

 \begin{align}
            \eta_{d,A_{0},B}(\mu)=&\int_{\IR^{d}}\Omega_{d}(\mu,z)\ind_{V}(z)\Id z,\\
            \xi_{d,A_{0},B}(\lambda)=&\int_{\IR^{d}}\Sigma_{d}(\lambda,z)\ind_{V}(z)\Id z,
\end{align}
where the integral kernels $\Omega_{d},\Sigma_{d}$ are given explicitly in terms of special functions, and the index density function $\ind_{V}$ only depends on the potential $V$. This last result is the content of Theorem \ref{thm:concreteexample}.

\section{Construction of higher order spectral shift}

Throughout the paper let $H$ be a complex separable Hilbert space, and $S^{p}$ the space of Schatten-von Neumann operators in $H$.

Higher order spectral shift functions $\xi_{n,T,S}$ are perturbation theoretic objects associated with traces of non-commutative Taylor series of a function $f$ developed around a (self-adjoint) point $T$ with perturbation $S$ (also self-adjoint and in Schatten-von Neumann class $S^{n}(H)$. This means for $f$ regular enough (for example $n$-times differentiable with all derivatives admitting integrable Fourier transforms), one has
\begin{align}
    \Tr\left(f(T+S)-\sum_{k=0}^{n-1}\frac{1}{k!}\p_{s=0}^{k}f(T+sS)\right)=\int_{\IR}f^{(n)}(\lambda)\xi_{n,T,S}(\lambda)\Id\lambda,
\end{align}
The existence of $\xi_{n,T,S}\in L^{1}(\IR)$ has been prominently proven in \cite{PotSkrSuk} after a conjecture by L. Koplienko in \cite{Kop}. The main tool in their proof of existence is a strong estimate on a certain type of symmetric multiple operator integrals, which we will cite below. As we shall see, this estimate is strong enough so that we can apply it in the context of this paper and construct the variant of spectral shift we need.

Before we proceed, let us first define multiple operator integrals of order $n$.

The development of the concept started with the series of papers \cite{BirSol1},\cite{BirSol2} and \cite{BirSol3} by M.S. Birman and M.Z. Solomyak in the late 1960's and early 1970's. V.V. Peller extended the theory extensively, and we recommend his survey article (and the enclosed literature list therein) \cite{Pel2} as a starting point to study multiple operator integrals. 

The main idea of multiple operator integrals is to provide a functional calculus for several (in general non-commutative) spectral measures, where the symbol space (i.e. the multiple variable functions which are amenable) remains commutative, by assigning a fixed order to the application of the spectral measures. What we mean by that precisely is the content of the next definition. We broadly follow the framework presented in \cite{Pel1}.

\begin{defn}\label{defn:symbols}
    Let $\mathbf{E}=\left((M_{0},\mathcal{B}_{0},E_{0}),\ldots,(M_{n},\mathcal{B}_{n},E_{n})\right)$ be a vector of spectral measures on $H$, and $\mathbf{T}=(T_{1},\ldots,T_{n})$ a vector of linear operators in $H$. Let $\mathcal{D}\subseteq H$ be a dense subspace. A measurable function $\phi:M_{0}\times\ldots\times M_{n}\rightarrow\IC$ is an element in $\mathcal{S}(\mathbf{E},\mathbf{T};\psi)$, the \textit{symbol space} of the multiple operator integral $\mathcal{J}(\cdot,\mathbf{E},\mathbf{T})$, if for $\psi\in\mathcal{D}$ there exists a measure space $(X,\tau)$, and measurable functions $\alpha_{i}:M_{i}\times X\rightarrow\IC$, such that for a.e. $\lambda_{i}\in M_{i}$
    \begin{align}
        x\mapsto\alpha_{0}(\lambda_{0},x)\cdot\ldots\cdot\alpha_{n}(\lambda_{n},x),
    \end{align}
    is integrable with
    \begin{align}\label{eq:defn:symbols:0}
        \phi(\lambda_{0},\ldots,\lambda_{n})=\int_{X}\alpha_{0}(\lambda_{0},x)\cdot\ldots\cdot\alpha_{n}(\lambda_{n},x)\Id\tau(x),
    \end{align}
    and such that for a.e. $x\in X$ the space $\mathcal{D}$ is contained in the domain of
    \begin{align}\label{eq:defn:symbols:1}
        \left(\int_{M_{0}}\alpha_{0}(\lambda_{0},x)\Id E_{0}(\lambda_{0})\right)\prod_{k=1}^{n}\left(T_{k}\int_{M_{k}}\alpha_{k}(\lambda_{k},x)\Id E_{k}(\lambda_{k})\right),
    \end{align}
    and,
    \begin{align}\label{eq:defn:symbols:2}
        x\mapsto\left(\int_{M_{0}}\alpha_{0}(\lambda_{0},x)\Id E_{0}(\lambda_{0})\right)\prod_{k=1}^{n}\left(T_{k}\int_{M_{k}}\alpha_{k}(\lambda_{k},x)\Id E_{k}(\lambda_{k})\right)\psi,
    \end{align}
      is a function in $L^{1}\left(X,\tau,H\right)$, the space of $H$-Bochner integrable functions.

    For $\psi\in\mathcal{D}$, and $\phi\in\mathcal{S}(\mathbf{E},\mathbf{T};\psi)$ define the \textit{multiple operator integral},\\
    $\mathcal{J}(\phi,\mathbf{E},\mathbf{T})\psi$, as the $H$-Bochner integral
    \begin{align}
        &\mathcal{J}(\phi,\mathbf{E},\mathbf{T})\psi\nonumber\\
        :=&\int_{X}\left(\int_{M_{0}}\alpha_{0}(\lambda_{0},x)\Id E_{0}(\lambda_{0})\right)\prod_{k=1}^{n}\left(T_{k}\int_{M_{k}}\alpha_{k}(\lambda_{k},x)\Id E_{k}(\lambda_{k})\right)\psi\Id\tau(x).
    \end{align}
\end{defn}

\begin{nota}
    For a self-adjoint operator $A$ in $H$ and a linear operator $B$, we use the short hand notation
    \begin{align}
        \mathcal{S}(A,B,\psi):=\mathcal{S}\left((E_{A},\ldots,E_{A}),(B,\ldots,B),\psi\right),\\
        \mathcal{J}(\cdot,A,B):=\mathcal{J}\left(\cdot,(E_{A},\ldots,E_{A}),(B,\ldots,B)\right),
    \end{align}
    where $E_{A}$ is the spectral measure on $\IR$ of $A$.
\end{nota}

By the operatic H\"older inequality it is clear, that if $\phi$ satisfies (\ref{eq:defn:symbols:0}) with
\begin{align}\label{eq:chaper2:1}
    \int_{T}\prod_{j=0}^{n}\left\|\alpha_{j}(\cdot,x)\right\|_{L^{\infty}(M_{j})}\Id\tau(x)=C<\infty,
\end{align}
and $T_{j}\in S^{p_{j}}$, for $p_{j}\in[1,\infty]$, for $j=1,\ldots,n$ with convention $S^{\infty}=B(H)$, which satisfy $r^{-1}=\sum_{j=1}^{n}p_{j}^{-1}$ for $r\in[1,\infty]$, then $\phi\in\mathcal{S}(\mathbf{E},\mathbf{T};\psi)$ for any $\psi\in H$, and $\mathcal{J}\left(\phi,\mathbf{E},\mathbf{T}\right)$ is an operator in $S^{r}(H)$ for $r^{-1}=\sum_{j=1}^{n}p_{j}^{-1}$, which satisfies
\begin{align}
    \left\|\mathcal{J}\left(\phi,\mathbf{E},\mathbf{T}\right)\right\|_{S^{r}}\leq C\prod_{j=1}^{n}\|T_{j}\|_{S^{p_{j}}}.
\end{align}

Let us now introduce an important subclass of symbols, those given by divided differences.

\begin{defn}
    For $f\in C^{n}(\IR)$, define the \textit{divided differences of $f$}, $\dif_{n}f:\IR^{n+1}\rightarrow\IC$, by
    \begin{align}
    \dif_{0}f&:=f,\nonumber\\
        \dif_{n}f(\lambda)&:=\begin{cases}
            \frac{1}{\lambda_{n-1}-\lambda_{n}}\left(\dif_{n-1}f\left(\lambda^{(n-1)}\right)-\dif_{n-1}f\left(\lambda^{(n)}\right)\right),&\lambda_{n}\neq\lambda_{n-1},\\
            \p_{\lambda_{n-1}}\dif_{n-1}f(\lambda_{0},\ldots,\lambda_{n-1}),&\lambda_{n}=\lambda_{n-1},
        \end{cases}
    \end{align}
    where $\lambda^{(j)}:=\left(\lambda_{0},\ldots,\lambda_{j-1},\lambda_{j+1},\ldots,\lambda_{n}\right)\in\IR^{n}$, for $\lambda\in\IR^{n+1}$, $j=0,\ldots,n$.
\end{defn}

A straightforward induction proof shows the following well known formula for divided differences. For $x,y\in\IR^{n}$ we denote
\begin{align}
        \langle x,y\rangle:=\sum_{j=1}^{m}x_{j}y_{j}.
\end{align}

\begin{lem}[Genochi-Hermite]\label{lem:genochihermite}
    Let $f\in C^{n}(\IR)$. Then
    \begin{align}
        \dif_{n}f(\lambda)=\int_{s\in\Delta_{n}}f^{(n)}\left(\langle s,\lambda\rangle\right)\Id s,
    \end{align}
    where $\Delta_{n}=\left\{s\in[0,1]^{n+1}\left|\sum_{j=0}^{n}s_{j}=1\right.\right\}$ is the $n$-simplex in $\IR^{n+1}$.
    
    In particular $\dif_{n}f$ is symmetric in all its arguments.
\end{lem}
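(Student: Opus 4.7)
The plan is to proceed by induction on $n$, which is the standard approach to the Genocchi-Hermite formula. The base case $n=0$ is immediate since $\Delta_0 = \{1\}$ and $\dif_0 f(\lambda_0) = f(\lambda_0)$. For the induction step I would first establish the formula on the open set where $\lambda_n \neq \lambda_{n-1}$, and then extend to the diagonal by continuity, noting that both sides of the claimed identity are continuous in $\lambda \in \IR^{n+1}$ whenever $f \in C^n(\IR)$: the right-hand side is the integral of a continuous function over a compact set, while the left-hand side is continuous by inspection of its defining case distinction (the derivative case matches the limit of the difference-quotient case since $f^{(n-1)}$ is differentiable).

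For the main step, assume $\lambda_n \neq \lambda_{n-1}$ and apply the induction hypothesis to both $\dif_{n-1} f(\lambda^{(n)})$ and $\dif_{n-1} f(\lambda^{(n-1)})$. Writing $g(u) := f^{(n-1)}\bigl(\sum_{j=0}^{n-2} s_j \lambda_j + u\bigr)$ and applying the fundamental theorem of calculus to $g(s_{n-1}\lambda_{n-1}) - g(s_{n-1}\lambda_n)$, one obtains
\begin{align}
\dif_{n} f(\lambda) = \int_{\Delta_{n-1}} \int_0^1 s_{n-1}\, f^{(n)}\!\Bigl(\sum_{j=0}^{n-2} s_j \lambda_j + s_{n-1}(1-r)\lambda_n + s_{n-1} r \lambda_{n-1}\Bigr) \Id r\, \Id s.
\end{align}
The next step is the substitution $t_j := s_j$ for $j \leq n-2$, $t_{n-1} := s_{n-1} r$, $t_n := s_{n-1}(1-r)$, whose Jacobian is exactly $s_{n-1}$ and whose image is $\Delta_n$. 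After this change of variables the integrand becomes $f^{(n)}(\langle t, \lambda\rangle)$ integrated against $\Id t$ on $\Delta_n$, establishing the formula.

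The symmetry claim then follows at once: since $(\lambda, s) \mapsto f^{(n)}(\langle s, \lambda\rangle)$ is invariant under simultaneous permutation of the coordinates of $\lambda$ and $s$, and the simplex $\Delta_n$ is invariant under any permutation of $s$, the integral $\int_{\Delta_n} f^{(n)}(\langle s, \lambda\rangle)\Id s$ depends symmetrically on $\lambda$.

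The only mildly technical point is ensuring that the change of variables is carried out correctly with the conventional parametrization of $\Delta_{n-1}$ by its first $n-1$ free coordinates; once this is set up, all the book-keeping is routine. The continuity argument at the diagonal $\lambda_n = \lambda_{n-1}$ is the place where one implicitly uses $f \in C^n(\IR)$ as opposed to merely $C^{n-1}$, and is where the equality case in the piecewise definition of $\dif_n f$ is vindicated.
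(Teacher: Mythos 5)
Your proof is correct and is precisely the ``straightforward induction'' that the paper asserts without writing out: base case $n=0$, Fundamental Theorem of Calculus applied to the induction hypothesis off the diagonal $\lambda_n=\lambda_{n-1}$, the change of variables $(s,r)\mapsto t$ with Jacobian $s_{n-1}$ folding $\Delta_{n-1}\times[0,1]$ onto $\Delta_n$, and continuity to handle the diagonal. The symmetry conclusion is also argued correctly.

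One point deserves attention. Your FTC step computes
\begin{align}
\frac{\dif_{n-1}f\left(\lambda^{(n)}\right)-\dif_{n-1}f\left(\lambda^{(n-1)}\right)}{\lambda_{n-1}-\lambda_{n}}
=\frac{\dif_{n-1}f\left(\lambda^{(n-1)}\right)-\dif_{n-1}f\left(\lambda^{(n)}\right)}{\lambda_{n}-\lambda_{n-1}},
\end{align}
i.e.\ you implicitly use the standard recursion with denominator $\lambda_{n}-\lambda_{n-1}$, whereas the paper's definition literally pairs the numerator $\dif_{n-1}f(\lambda^{(n-1)})-\dif_{n-1}f(\lambda^{(n)})$ with the denominator $\lambda_{n-1}-\lambda_{n}$. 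As written, that definition gives $\dif_{1}f(\lambda_{0},\lambda_{1})=\frac{f(\lambda_{1})-f(\lambda_{0})}{\lambda_{0}-\lambda_{1}}$, which is the negative of $\int_{\Delta_{1}}f'(\langle s,\lambda\rangle)\Id s$, and more generally produces $(-1)^{n}$ times the Genocchi--Hermite integral. This is evidently a sign typo in the paper's definition (the rest of the paper, e.g.\ the Fourier representation of $\dif_{n}g_{\xi}$ in the proof of Proposition \ref{prop:divideddifference}, uses the standard convention your proof establishes), but you should state explicitly which recursion you are inducting on, since a literal reading of the definition would make the claimed identity fail for odd $n$.
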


A classical question in the theory of multiple operator integrals is under which "accessible" conditions on $f$ the divided difference $\dif_{n}f$ is a symbol in $\mathcal{S}(\mathbf{E},\mathbf{T};\psi)$. It turns out, that it is surprisingly difficult to answer, and remains in its admittedly vaguely formulated generality unresolved to the knowledge of the author. Again for more details surrounding this question, and multiple operator integrals in general, we point to the survey article \cite{Pel2}. However, for most applications a sufficiently large class of functions $f$ for which the factorization of $\dif_{n}f$ satisfies (\ref{eq:chaper2:1}) is well-known, for example it suffices that the $n$-th derivative of $f$ has an integrable Fourier transform. The proof of this fact is classical, but to keep this paper sufficiently self-contained, we include it in the proof of Proposition \ref{prop:divideddifference}.

We modify these classical spaces by weights, their purpose will become clear as soon as we introduce the kind of operators to which we want their multiple operator integrals to be applied. We should note that a similar approach is used in \cite{NulSkr}, of which we will however deviate sufficiently, such that we need to present our approach in a self-contained manner (except for Theorem \ref{thm:principalestimate}, the main estimate in \cite{PotSkrSuk}).

\begin{defn}
    Let $X$ denote the function $x\mapsto x$, and for $z\geq 0$ let $\langle x\rangle_{z}:=(|x|^{2}+z)^{\frac{1}{2}}$, and $\langle x\rangle:=\langle x\rangle_{1}$. Let $\mathcal{S}(\IR)$ be the Schwartz functions of $\IR$, and $\mathcal{S}'(\IR)$ the space of tempered distributions. Let $\operatorname{FM}(\IR)$ be the Banach space of signed finite Radon measures on $\IR$, equipped with the total variation norm $\|\cdot\|_{\operatorname{TV}(\IR)}$. Denote by $\mathcal{F}$ the Fourier transform on $\IR$.
    
    For $m,n\in\IN_{0}$ denote by $W^{m,n}$ the space of functions $f\in C^{n}(\IR)\cap\mathcal{S}'(\IR)$, such that for $\max(n-m,0)\leq l\leq n$,
    \begin{align}
        \mathcal{F}\left((X+\Ii)^{m-n+l}f^{(l)}\right)\in\operatorname{FM}(\IR).
    \end{align}
    We equip $W^{m,n}$ with the (semi-)norm
    \begin{align}
        \left\|f\right\|_{W^{m,n}}:=\sum_{l=\max(n-m,0)}^{n}\left\|\mathcal{F}\left((X+\Ii)^{m-n+l}f^{(l)}\right)\right\|_{\operatorname{TV}(\IR)}.
    \end{align}
    Denote by $W^{m,n}_{0}$ the subspace of $W^{m,n}$ consisting of functions $f$, such that for $\max(n-m,0)\leq l\leq n$
    \begin{align}
        \mathcal{F}\left((X+\Ii)^{m-n+l}f^{(l)}\right)\in L^{1}(\IR).
    \end{align}
    Note that in particular the space $C_{c}^{\infty}(\IR)$ is contained and dense in all $W^{m,n}_{0}$. We also introduce the weaker semi-norm
    \begin{align}
        \left\|f\right\|_{W^{m,n}_{\infty}}:=\sum_{l=\max(n-m,0)}^{n}\left\|(X+\Ii)^{m-n+l}f^{(l)}\right\|_{L^{\infty}(\IR)}.
    \end{align}
\end{defn}

We now introduce the class of (generally unbounded) operators to which we want to apply the multiple operator integral $\mathcal{J}(\dif_{n}f,A,\cdot)$ for $f\in W^{m,n}$, and $A$ self-adjoint. Their main property is that they well-behave on the scale of Hilbert spaces generated by $A$.

\begin{defn}\label{def:resolutionterms}
    Let $m\in\IN_{0}$, $p\in[1,\infty)$, $\alpha\in[0,\infty)$, and $A$ self-adjoint. Let $S^{m,p,\alpha}_{A}$ be the space of operators $T$ in $H$ such that $\langle A\rangle^{\beta}T\langle A\rangle^{-\beta-\alpha}$ is densely defined and extends to an operator in $S^{p}$ for $\beta\in\IR$ with $|\beta|\leq\max(m-1,0)$. On $S^{m,p,\alpha}_{A}$ define the norm
    \begin{align}
        \|T\|_{S^{m,p,\alpha}_{A}}:=\sup_{\beta\in\IR,~|\beta|\leq\max(m-1,0)}\left\|\overline{\langle A\rangle^{\beta}T\langle A\rangle^{-\beta-\alpha}}\right\|_{S^{p}}.
    \end{align}
    We denote $S^{m,p}_{A}:=S^{m,p,0}_{A}$. Let $\mathbf{T}:=(T_{1},\ldots T_{n})$ be a vector of operators in $S_{A}^{m,\mathbf{p}}:=S^{m,p_{1}}_{A}\times\ldots\times S^{m,p_{n}}_{A}$ with $\mathbf{p}:=(p_{1},\ldots,p_{n})\in[1,\infty)^{n}$. For $k\in\{0,\ldots,\min(m,n)\}$ we introduce the operators
    \begin{align}
        F_{k}^{m,n}(A,\mathbf{T})&:=(-1)^{k}\sum_{\stackrel{\gamma\in\IN_{0}^{k}}{m>\gamma_{1}>\ldots>\gamma_{k}}}\prod_{j=1}^{k}(A+\Ii)^{\gamma_{j}}T_{j}(A+\Ii)^{-\gamma_{j}},~k\geq 1,~m\geq 1,\nonumber\\
         F_{k}^{0,n}(A,\mathbf{T})&:=0,~k\geq 1,\nonumber\\
          F_{0}^{m,n}(A,\mathbf{T})&:=1.
    \end{align}
    For $f\in W^{m,n}$ we introduce
    \begin{align}
        \mathcal{M}_{k}^{m,n}(f,A,\mathbf{T}):=\mathcal{J}\left(\dif_{n-k}\left((X+\Ii)^{m-k}f\right),A,(T_{k+1},\ldots,T_{n})\right).
    \end{align}
\end{defn}

We will utilize the following central estimate from \cite{PotSkrSuk}. It is the "work horse" in the construction of higher order spectral shift measures, which turn out to be absolutely continuous with respect to the Lebesgue measure. We will mimic this procedure in our setup below.

\begin{thm}[Theorem 5.3/Remark 5.4 \cite{PotSkrSuk}]\label{thm:principalestimate}
    Let $p_{j}\in(1,\infty)$, such that $q^{-1}:=\sum_{j=1}^{n}p_{j}^{-1}\in(0,1)$. Then there is a constant $c$, only dependent on $p_{1},\ldots,p_{n}$, such that for $f\in C^{n}$, $A$ self-adjoint, and $B_{j}\in S^{p_{j}}$,
    \begin{align}
        \left\|\mathcal{J}\left(\dif_{n}f,A,(B_{1},\ldots,B_{n})\right)\right\|_{S^{q}}\leq c\|f^{(n)}\|_{\infty}\prod_{j=1}^{n}\|B_{j}\|_{S^{p_{j}}}.
    \end{align}
\end{thm}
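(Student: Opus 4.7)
Since this bound is the central technical ingredient of \cite{PotSkrSuk}, I expect the proof to be both deep and lengthy. The strategy I would follow is a discretization-plus-Schur-multiplier argument. First, by density (approximating $f^{(n)}$ in a suitable weak topology by Schwartz functions, and approximating $A$ by its compressions to finite-dimensional spectral subspaces) it suffices to treat the case $f \in \mathcal{S}(\mathbb{R})$ and $A$ with pure-point spectrum $\{\lambda_i\}_{i \in I}$ and rank-one spectral projections $E_{ii}$. In this setting the multiple operator integral turns into the multilinear Schur multiplier
\begin{align}
\left[\mathcal{J}(\dif_n f, A, (B_1,\ldots,B_n))\right]_{i_0, i_n} = \sum_{i_1,\ldots,i_{n-1}} \dif_n f(\lambda_{i_0},\ldots,\lambda_{i_n})\, (B_1)_{i_0 i_1}\cdots (B_n)_{i_{n-1}i_n},
\end{align}
and the task becomes bounding the operator norm of this multilinear symbol $\dif_n f$ from $S^{p_1}\times\cdots\times S^{p_n}$ to $S^q$ by $c\,\|f^{(n)}\|_\infty$.

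Next, I would invoke the Genochi-Hermite formula (Lemma \ref{lem:genochihermite}) to write
\begin{align}
\dif_n f(\lambda_0,\ldots,\lambda_n) = \int_{\Delta_n} g(\langle s, \lambda\rangle)\, ds, \qquad g := f^{(n)},
\end{align}
so that the symbol is averaged, over a probability measure on the simplex, against an $L^\infty$-function of a single positive linear combination of the eigenvalue variables. This suggests that up to a uniform-in-$s$ bound, the multilinear symbol behaves like a single-variable symbol with norm $\|g\|_\infty = \|f^{(n)}\|_\infty$, and the outer integration over $\Delta_n$ will contribute only the combinatorial constant depending on $(p_1,\ldots,p_n)$.

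The main obstacle, then, is to prove the uniform-in-$s$ estimate: for each $s\in\Delta_n$, the multilinear Schur multiplier with symbol $(\lambda_0,\ldots,\lambda_n)\mapsto g(\langle s,\lambda\rangle)$ is bounded from $S^{p_1}\times\cdots\times S^{p_n}$ to $S^q$ with constant at most $c\,\|g\|_\infty$. This is where the restriction $p_j>1$ and $q^{-1}<1$ becomes indispensable: one decomposes the symbol using, for instance, a tensor-product Haar/Littlewood-Paley decomposition in the $\lambda_j$-variables, and on each dyadic block expresses the multiplier as an iterated composition of noncommutative Hilbert transforms and bounded multiplications. The $S^p$-boundedness of the triangular projection (Gohberg-Krein, Macaev-Matsaev) and its noncommutative martingale/UMD analogues (as exploited in \cite{PotSkrSuk}) then yields the required bound on each block, and a duality plus interpolation argument sums the blocks. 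The delicate bookkeeping here — controlling how the Hilbert transforms acting in different tensor slots interact through the Schatten Hölder inequality $q^{-1} = \sum p_j^{-1}$ — is the genuinely hard part, and it is precisely what forces the strict inequalities on the exponents. Once this uniform estimate is in hand, integration over $\Delta_n$ and the two reductions in the opening paragraph complete the proof.
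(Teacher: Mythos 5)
First, a point of order: the paper does not prove this statement at all — it is imported verbatim from \cite{PotSkrSuk} (their Theorem 5.3/Remark 5.4) — so there is no internal proof to compare against, and your sketch has to be judged against the actual argument of \cite{PotSkrSuk}.

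Your framework (discretization to a multilinear Schur multiplier, the Genochi--Hermite representation, triangular truncation, UMD-based harmonic analysis) correctly names the ingredients, but the central reduction you propose is false. You declare the "main obstacle" to be a uniform-in-$s$ estimate: that for each fixed $s\in\Delta_{n}$ the multilinear Schur multiplier with symbol $\lambda\mapsto g(\langle s,\lambda\rangle)$ maps $S^{p_{1}}\times\cdots\times S^{p_{n}}$ to $S^{q}$ with norm at most $c\|g\|_{\infty}$. Already for $n=1$ and $s=(\tfrac{1}{2},\tfrac{1}{2})$ this fails: taking $A$ with spectrum $\mathbb{Z}$, the symbol $g\bigl((\lambda_{0}+\lambda_{1})/2\bigr)$ is a Hankel-type Schur multiplier, which after the $S^{p}$-isometric column reversal $e_{j}\mapsto e_{-j}$ becomes a Toeplitz Schur multiplier with symbol $g\bigl((i-j)/2\bigr)$; by transference its $S^{p}$-bound dominates the $L^{p}(\mathbb{T})$-Fourier-multiplier norm of the corresponding sequence, and for $p\neq 2$ there are bounded sequences that are not $L^{p}$-multipliers. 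A second sanity check: if the pointwise-in-$s$ bound held for $n=1$, integrating over $s\in[0,1]$ would instantly reprove the Potapov--Sukochev operator-Lipschitz theorem on $S^{p}$, which is itself the deep base case of the whole enterprise. The actual proof in \cite{PotSkrSuk} never decouples the simplex integral pointwise: it treats the entire class of ``polynomial integral momenta'' $\int_{\Delta_{n}}f^{(n)}(\langle s,\lambda\rangle)\Id s$ as the object of study, derives algebraic recursion identities reducing the number of variables, and runs an induction on $n$ whose base case is the UMD/vector-valued Marcinkiewicz machinery of Potapov--Sukochev. The averaging over $\Delta_{n}$ is thus an essential structural feature of the symbol, not a harmless outer integration, and your sketch as written does not close.
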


We directly gain an estimate for the operators we introduced in Definition \ref{def:resolutionterms}.

\begin{lem}\label{lem:essentialestimates}
    Let $m\in\IN_{0}$, $n\in\IN$ and let $\mathbf{T}\in S^{m,\mathbf{p}}_{A}$ be as in Definition \ref{def:resolutionterms} with $\mathbf{p}\in(1,\infty)^{n}$. Then there is a constant $c$ only dependent on $m,n,\mathbf{p}$, such that for $0\leq k\leq\min(m,n)$, and $f\in W^{m,n}$,
    \begin{align}\label{eq:lem:essentialestimates:1}
        \left\|F_{k}^{m,n}(A,\mathbf{T})\right\|_{S^{q_{k}}}\leq c\prod_{j=1}^{k}\left\|T_{j}\right\|_{S^{m,p_{j}}_{A}},
        \end{align}
        with $q_{k}^{-1}:=\sum_{j=1}^{k}p_{j}^{-1}$, and
        \begin{align}\label{eq:lem:essentialestimates:2}
        \left\|\mathcal{M}_{k}^{m,n}(f,A,\mathbf{T})\right\|_{S^{r_{k}}}&\leq c\left\|\left((X+\Ii)^{m-k}f\right)^{(n-k)}\right\|_{\infty}\prod_{j=k+1}^{n}\left\|T_{j}\right\|_{S^{p_{j}}},
    \end{align}
    with $r_{k}^{-1}:=\sum_{j=k+1}^{n}p_{j}^{-1}$.
\end{lem}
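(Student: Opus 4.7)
The plan is to handle (\ref{eq:lem:essentialestimates:1}) via polar decomposition combined with the noncommutative Hölder inequality, while (\ref{eq:lem:essentialestimates:2}) follows essentially by direct invocation of Theorem \ref{thm:principalestimate}. The degenerate cases $m=0$ and $k=0$ are immediate from the definition of $F_{k}^{m,n}$ (trivially $0$ and $1$ respectively), so throughout I may assume $1\leq k\leq\min(m,n)$ and $m\geq 1$.

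For (\ref{eq:lem:essentialestimates:1}), I would first exploit the polar decomposition $A+\Ii=U\langle A\rangle$, where the unitary $U:=\int_{\IR}\frac{\lambda+\Ii}{|\lambda+\Ii|}\,\Id E_{A}(\lambda)$ is itself a Borel function of $A$ and hence commutes with every Borel function of $A$; this yields $(A+\Ii)^{\pm\gamma_{j}}=U^{\pm\gamma_{j}}\langle A\rangle^{\pm\gamma_{j}}$. Since $U^{-\gamma_{j}}$ commutes with $\langle A\rangle^{-\gamma_{j}}$, each factor of the defining product satisfies
\[
    (A+\Ii)^{\gamma_{j}}T_{j}(A+\Ii)^{-\gamma_{j}}=U^{\gamma_{j}}\bigl(\langle A\rangle^{\gamma_{j}}T_{j}\langle A\rangle^{-\gamma_{j}}\bigr)U^{-\gamma_{j}},
\]
so its $S^{p_{j}}$-norm equals $\|\langle A\rangle^{\gamma_{j}}T_{j}\langle A\rangle^{-\gamma_{j}}\|_{S^{p_{j}}}$, which is bounded by $\|T_{j}\|_{S^{m,p_{j}}_{A}}$ since $0\leq\gamma_{j}\leq m-1$ lies in the admissible range of Definition \ref{def:resolutionterms}. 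The noncommutative Hölder inequality now controls the product in $S^{q_{k}}$, and since the index set of admissible tuples $\gamma$ has cardinality $\binom{m}{k}$, (\ref{eq:lem:essentialestimates:1}) follows with a constant depending only on $m$, $k$, and $\mathbf{p}$.

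For (\ref{eq:lem:essentialestimates:2}), I would observe that each $T_{j}\in S^{m,p_{j}}_{A}$ embeds in $S^{p_{j}}$ via the choice $\beta=0$ in Definition \ref{def:resolutionterms}, and that $g:=(X+\Ii)^{m-k}f$ is of class $C^{n-k}$ with $g^{(n-k)}\in L^{\infty}(\IR)$ whenever $f\in W^{m,n}$ (expand via Leibniz and use that each summand involves a factor $(X+\Ii)^{m-n+l}f^{(l)}$ with $\max(n-m,0)\leq l\leq n-k$, whose Fourier transform is a finite Radon measure, making the function bounded). Theorem \ref{thm:principalestimate}, applied with the substitutions $n\to n-k$, $f\to g$, $B_{j}\to T_{k+j}$, and $q\to r_{k}$, then gives (\ref{eq:lem:essentialestimates:2}) immediately.

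The only real technical subtlety, present primarily in (\ref{eq:lem:essentialestimates:1}), is the correct handling of closures: the norm $\|T_{j}\|_{S^{m,p_{j}}_{A}}$ is defined through bounded extensions of densely defined weighted products, and one must verify that the composition of these extensions agrees with the bounded extension of the formal product entering the definition of $F_{k}^{m,n}$. This is checked on the dense common core $\bigcap_{l\in\IN}\dom(\langle A\rangle^{l})$ and extended by continuity — a routine but essential step, and the only place where the strengthened hypothesis $T_{j}\in S^{m,p_{j}}_{A}$, rather than the weaker $T_{j}\in S^{p_{j}}$, is genuinely used.
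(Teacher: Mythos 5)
Your proof is correct and follows essentially the same route as the paper, whose own argument is just the two-line observation that (\ref{eq:lem:essentialestimates:1}) is the Schatten--H\"older inequality and (\ref{eq:lem:essentialestimates:2}) is Theorem \ref{thm:principalestimate} applied with $n\to n-k$. Your additional care with the polar decomposition $A+\Ii=U\langle A\rangle$ (reconciling the $(A+\Ii)^{\gamma_j}$-conjugations in $F_k^{m,n}$ with the $\langle A\rangle^{\beta}$-weights in the definition of $\|\cdot\|_{S^{m,p_j}_A}$), the count $\binom{m}{k}$ of admissible tuples $\gamma$, and the Leibniz-expansion check that $((X+\Ii)^{m-k}f)^{(n-k)}\in L^{\infty}$ are exactly the details the paper leaves implicit.
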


\begin{proof}
    The first estimate (\ref{eq:lem:essentialestimates:1}) is a direct consequence of the H\"older inequality for Schatten-von Neumann operators. The second estimate (\ref{eq:lem:essentialestimates:2}) follows from Theorem \ref{thm:principalestimate}.
\end{proof}

The reason we introduce the operators in Definition \ref{def:resolutionterms} is the following decomposition. It is similar to the approach used in \cite{NulSkr}, where the authors use a similar factorization, albeit without the additional parameter $\alpha\geq 1$, which is essential for our purposes. Their approach leads to a spectral shift function for relative Schatten-von Neumann perturbations, i.e. for operators $T\in S^{0,p,1}_{A}$.

\begin{lem}\label{lem:weighteddif}
    Let $m,n\in\IN_{0}$, and $\mathbf{T}=(T_{1},\ldots,T_{n})$ a vector of bounded operators such that $\langle A\rangle^{\beta}T\langle A\rangle^{-\beta}$ is densely defined and extends to a bounded operator for $\beta\in\IR$ with $|\beta|\leq\max(m-1,0)$. For $f\in W^{m,n}$ we have
    \begin{align}
        (A+\Ii)^{m}\mathcal{J}\left(\dif_{n}f,A,\mathbf{T}\right)=\sum_{k=0}^{\min(m,n)}F^{m,n}_{k}(A,\mathbf{T})\mathcal{M}_{k}^{m,n}(f,A,\mathbf{T}).
    \end{align}
\end{lem}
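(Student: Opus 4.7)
The plan is to induct on $m$ with $n$ and $f$ arbitrary. The base case $m=0$ is immediate since $F_0^{0,n}=1$ and $\mathcal{M}_0^{0,n}(f,A,\mathbf{T})=\mathcal{J}(\dif_n f,A,\mathbf{T})$. The heart of the argument is a ``base identity'' that pushes a single factor $(A+\Ii)$ into the multiple operator integral,
\begin{align*}
(A+\Ii)\mathcal{J}(\dif_n f,A,\mathbf{T})=\mathcal{J}(\dif_n((X+\Ii)f),A,\mathbf{T})-T_1\mathcal{J}(\dif_{n-1}f,A,(T_2,\ldots,T_n)),
\end{align*}
which I would derive from the symbol-level identity
\begin{align*}
(\lambda_0+\Ii)\dif_n f(\lambda_0,\ldots,\lambda_n)=\dif_n((X+\Ii)f)(\lambda_0,\ldots,\lambda_n)-\dif_{n-1}f(\lambda_1,\ldots,\lambda_n).
\end{align*}
This in turn follows from the Leibniz rule for divided differences applied to $g(\lambda)=\lambda+\Ii$ and $h=f$, using $\dif_0 g=\lambda_0+\Ii$, $\dif_1 g=1$, and $\dif_k g=0$ for $k\geq 2$. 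The symbol-to-operator passage uses Definition \ref{defn:symbols}: multiplying the zeroth spectral factor by $\lambda_0+\Ii$ corresponds to left multiplication by $A+\Ii$, and a symbol independent of $\lambda_0$ yields $T_1$ applied to the reduced MOI on the shifted vector.

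For the inductive step I write $(A+\Ii)^m=(A+\Ii)^{m-1}(A+\Ii)$, apply the base identity to the rightmost factor, and invoke the inductive hypothesis on each resulting term. The first summand gives $\sum_{k=0}^{\min(m-1,n)}F_k^{m-1,n}(A,\mathbf{T})\mathcal{M}_k^{m,n}(f,A,\mathbf{T})$ after noting $\mathcal{M}_k^{m-1,n}((X+\Ii)f,A,\mathbf{T})=\mathcal{M}_k^{m,n}(f,A,\mathbf{T})$. For the second summand I first commute $(A+\Ii)^{m-1}T_1=\bigl[(A+\Ii)^{m-1}T_1(A+\Ii)^{-(m-1)}\bigr](A+\Ii)^{m-1}$, where the bracketed operator is bounded by the hypothesis on $T_1$ at $|\beta|=m-1$; then the IH applied to $(A+\Ii)^{m-1}\mathcal{J}(\dif_{n-1}f,A,(T_2,\ldots,T_n))$ together with the reindexing $k\mapsto k+1$ produces $-\sum_{k=1}^{\min(m,n)}(A+\Ii)^{m-1}T_1(A+\Ii)^{-(m-1)}F_{k-1}^{m-1,n-1}(A,(T_2,\ldots,T_n))\mathcal{M}_k^{m,n}(f,A,\mathbf{T})$. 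Matching coefficients of each $\mathcal{M}_k^{m,n}$ then reduces the induction to the purely combinatorial identity
\begin{align*}
F_k^{m,n}(A,\mathbf{T})=F_k^{m-1,n}(A,\mathbf{T})-(A+\Ii)^{m-1}T_1(A+\Ii)^{-(m-1)}F_{k-1}^{m-1,n-1}(A,(T_2,\ldots,T_n)),
\end{align*}
which I would prove by splitting the sum defining $F_k^{m,n}$ over $m>\gamma_1>\ldots>\gamma_k\geq 0$ according to whether $\gamma_1\leq m-2$ (yielding $F_k^{m-1,n}(A,\mathbf{T})$) or $\gamma_1=m-1$ (yielding the second summand after the shift $\gamma_{j+1}\mapsto\gamma_j'$ in the remaining indices).

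I expect the main technical obstacle to be ensuring that the modified symbols $\dif_n((X+\Ii)^j f)$, $\dif_{n-k}((X+\Ii)^{m-k}f)$ and $\dif_{n-1}f$ genuinely lie in the relevant symbol space $\mathcal{S}(A,\cdot,\psi)$ throughout the iteration, so that each MOI-level step is legitimate and $(A+\Ii)$ can actually be pulled inside. The weight exponents $(X+\Ii)^{m-n+l}$ in the definition of $W^{m,n}$ are precisely calibrated so that $(X+\Ii)f$ and $f$ restrict to the appropriate weighted spaces as the induction peels off one factor at a time; this is controlled through the factorization of $\dif_n f$ provided by Proposition \ref{prop:divideddifference}. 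Boundedness of all conjugations $(A+\Ii)^\gamma T_j(A+\Ii)^{-\gamma}$ for $\gamma\leq m-1$ that appear inside $F_k^{m,n}$ follows from the stated hypothesis on $\mathbf{T}$ together with the fact that $|\lambda+\Ii|$ and $\langle\lambda\rangle$ are comparable uniformly on $\IR$.
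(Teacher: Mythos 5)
Your proposal is correct, and it reaches the identity by a route organized differently from the paper's, though both rest on the same engine: the Leibniz (product) rule for divided differences applied to the affine factor $X+\Ii$. The paper inducts on $n$ and, in a single step, expands the full power via
\begin{align*}
(A+\Ii)^{m}\mathcal{J}\left(\dif_{n}f,A,\mathbf{T}\right)=\mathcal{J}\left(\dif_{n}((X+\Ii)^{m}f),A,\mathbf{T}\right)-\sum_{j=0}^{m-1}(A+\Ii)^{j}T_{1}\mathcal{J}\left(\dif_{n-1}((X+\Ii)^{m-j-1}f),A,\widetilde{\mathbf{T}}\right),
\end{align*}
then applies the inductive hypothesis (in $n$) to each of the $m$ terms with the weight parameter $j$ in place of $m$; the recombination into $\sum_{k}F^{m,n}_{k}\mathcal{M}^{m,n}_{k}$ is read off by recognizing $F_{k+1}^{m,n}$ as the sum over $m>\gamma_{1}>\ldots>\gamma_{k+1}$ stratified by $\gamma_{1}=j$. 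You instead induct on $m$, peel off one factor of $(A+\Ii)$ at a time (your base identity is precisely the $m=1$ case of the paper's expansion), and absorb the bookkeeping into the explicit recursion $F_{k}^{m,n}=F_{k}^{m-1,n}-(A+\Ii)^{m-1}T_{1}(A+\Ii)^{-(m-1)}F_{k-1}^{m-1,n-1}$, which is correct as you derive it (split over $\gamma_{1}\leq m-2$ versus $\gamma_{1}=m-1$; the signs match because of the $(-1)^{k}$ prefactor). Your version trades the paper's $m$-fold symbol expansion for a cleaner one-step identity at the cost of verifying that $(X+\Ii)f\in W^{m-1,n}$ and $f\in W^{m-1,n-1}$, which does follow from the calibration of the weights $(X+\Ii)^{m-n+l}$ exactly as you indicate; the paper's route needs the analogous check for the intermediate functions $(X+\Ii)^{m-j-1}f$. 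Both arguments are equally informal about admissibility of the modified symbols in $\mathcal{S}(A,\cdot,\psi)$, so the ``technical obstacle'' you flag is not a gap relative to the paper's own standard of rigor.
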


\begin{proof}
    We perform induction in $n$. For $n=0$ the statement is true. For $n\geq 1$ let $\widetilde{\mathbf{T}}:=(T_{2},\ldots,T_{n})$, then the product rule of divided differences implies
    \begin{align}
        &(A+\Ii)^{m}\mathcal{J}\left(\dif_{n}f,A,\mathbf{T}\right)=\mathcal{J}\left(\dif_{n}\left((X+\Ii)^{m}f\right),A,\mathbf{T}\right)\nonumber\\
        &-\sum_{j=0}^{m-1}(A+\Ii)^{j}T_{1}\mathcal{J}\left(\dif_{n-1}\left((X+\Ii)^{m-j-1}f\right),A,\widetilde{\mathbf{T}}\right)\\
        =&\mathcal{M}_{0}^{m,n}\left(f,A,\mathbf{T}\right)-\sum_{j=0}^{m-1}\left[(A+\Ii)^{j}T_{1}(A+\Ii)^{-j}\right]\nonumber\\
        &(A+\Ii)^{j}\mathcal{J}\left(\dif_{n-1}\left((X+\Ii)^{m-j-1}f\right),A,\widetilde{\mathbf{T}}\right)
    \end{align}
    \begin{align}
        =&\mathcal{M}_{0}^{m,n}\left(f,A,\mathbf{T}\right)-\sum_{j=0}^{m-1}\left[(A+\Ii)^{j}T_{1}(A+\Ii)^{-j}\right]\sum_{k=0}^{\min(j,n-1)}F^{j,n-1}_{k}\left(A,\widetilde{\mathbf{T}}\right)\nonumber\\
        &\mathcal{M}_{k}^{j,n-1}\left((X+\Ii)^{m-j-1}f,A,\widetilde{\mathbf{T}}\right)
    \end{align}
    \begin{align}
        =&F_{0}^{m,n}(A,\mathbf{T})\mathcal{M}_{0}^{m,n}\left(f,A,\mathbf{T}\right)-\sum_{k=0}^{\min(m-1,n-1)}(-1)^{k}F_{k+1}^{m,n}(A,\mathbf{T})\mathcal{M}_{k+1}^{m,n}(f,A,\mathbf{T})\nonumber\\
        =&\sum_{k=0}^{\min(m,n)}F^{m,n}_{k}(A,\mathbf{T})\mathcal{M}_{k}^{m,n}(f,A,\mathbf{T}).
    \end{align}
\end{proof}

Due to the decomposition above we are able to produce the following trace norm estimate for the multiple operator integral applied to a vector of unbounded operators $\mathbf{T}$.

\begin{cor}\label{cor:multiopestimate}
    Let $m\in\IN_{0}$, $n\in\IN$, $\alpha\in[0,\infty)$ such that $m\geq\alpha(n+1)$, and $\mathbf{p}\in(1,\infty)^{n}$. Let $q^{-1}:=\sum_{j=1}^{n}p_{j}^{-1}$. Let $T_{0}\in S^{m,q,\alpha}_{A}$, and $\mathbf{T}\in S^{m,\mathbf{p},\alpha}_{A}$. Then for $f\in W^{m,n}$
    \begin{align}
        \left\|T_{0}\mathcal{J}\left(\dif_{n}f,A,\mathbf{T}\right)\right\|_{S^{1}}\leq c\left\|f\right\|_{W^{m,n}_{\infty}}\left\|T_{0}\right\|_{S^{m,q,\alpha}_{A}}\prod_{j=1}^{n}\left\|T_{j}\right\|_{S^{m,p_{j},\alpha}_{A}},
    \end{align}
    where the constant $c$ is independent of the operators $T_{0},\mathbf{T}, A$ and the function $f$.
\end{cor}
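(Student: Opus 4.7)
The plan is to use Lemma \ref{lem:weighteddif} as the starting point and then distribute the $m$ powers of $\langle A\rangle^{-1}$ hidden in $(A+\Ii)^{-m}$ among the $n+1$ operators $T_0, T_1, \ldots, T_n$, pairing each with one $\langle A\rangle^{-\alpha}$ so that it is converted into a wrapped form bounded in the Schatten class indicated by its $S^{m,\cdot,\alpha}_A$-membership. First I would apply Lemma \ref{lem:weighteddif} (after truncating any unbounded $T_j$ by a bounded spectral cutoff of $A$ and passing to a limit at the end) to obtain
\[
T_0 \mathcal{J}(\dif_n f, A, \mathbf{T}) = T_0 (A+\Ii)^{-m}\sum_{k=0}^{\min(m,n)} F^{m,n}_k(A, \mathbf{T})\, \mathcal{M}^{m,n}_k(f, A, \mathbf{T}),
\]
so it suffices to bound each summand in $S^1$.

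For fixed $k$, I would treat the $F^{m,n}_k$ factor by elementary commutation. Inserting $\langle A\rangle^{\alpha}\langle A\rangle^{-\alpha}$ next to $T_0$ and next to each $T_j$ with $j\leq k$, and rearranging (the inserted weights are all functions of $A$), every such $T_j$ is converted into the wrapped form $\langle A\rangle^{\beta_j}T_j\langle A\rangle^{-\beta_j-\alpha}$ for some $\beta_j$ with $|\beta_j|\leq m-1$; this is where the conditions $\gamma_j<m$ and $m\geq(n+1)\alpha$ enter. By definition this wrapped operator is bounded in $S^{p_j}$, while $T_0$ becomes $T_0\langle A\rangle^{-\alpha}\in S^q$, and the remaining central function of $A$ is bounded in operator norm since the residual exponent $m-(n+1)\alpha$ is nonnegative.

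For the operators $T_{k+1},\ldots,T_n$ sitting inside the MOI $\mathcal{M}^{m,n}_k$, direct commutation is unavailable. Instead I would exploit the compatibility of MOIs with right-multiplication by functions of $A$: rewriting $T_j = (T_j\langle A\rangle^{-\alpha})\cdot\langle A\rangle^{\alpha}$ and absorbing the $\langle A\rangle^{\alpha}$ into the adjacent spectral measure of $A$ transforms the MOI into one whose operator arguments are the Schatten operators $T_j\langle A\rangle^{-\alpha}\in S^{p_j}$ and whose symbol is $\dif_{n-k}\bigl((X+\Ii)^{m-k}f\bigr)$ multiplied by a product of coordinate weights $\langle\cdot\rangle^{\alpha}$ in the internal variables. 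Combining these coordinate weights with the residual $(A+\Ii)$-factor yields a new symbol whose $(n-k)$-th derivative has sup-norm controlled by $\|f\|_{W^{m,n}_\infty}$ — this is precisely the role of the weights $(X+\Ii)^{m-n+l}$ in the definition of $W^{m,n}$. Theorem \ref{thm:principalestimate} then bounds the modified MOI in $S^{r_k}$, and Schatten--Hölder across the factors $T_0\langle A\rangle^{-\alpha}$, the wrapped $F^{m,n}_k$-product, and $\mathcal{M}^{m,n}_k$ collects into the claimed $S^1$-estimate with constant $c\|f\|_{W^{m,n}_\infty}\|T_0\|_{S^{m,q,\alpha}_A}\prod_{j=1}^n\|T_j\|_{S^{m,p_j,\alpha}_A}$.

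The main obstacle will be the rigorous execution of the MOI manipulation above: one has to verify that absorbing $\langle A\rangle^{\alpha}$ across the spectral measures of $A$ inside $\mathcal{M}^{m,n}_k$ is compatible with the factorisation structure of the symbol required by (\ref{eq:chaper2:1}), and that the sup-norm of the resulting weighted symbol is uniformly controlled by $\|f\|_{W^{m,n}_\infty}$. With this in hand, the remaining arguments are routine bookkeeping with Hölder and the previously established Lemmas \ref{lem:essentialestimates} and \ref{lem:weighteddif}.
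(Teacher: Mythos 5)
Your treatment of the $F_k^{m,n}$ factor is fine in spirit, but the step you yourself flag as ``the main obstacle'' --- absorbing $\langle A\rangle^{\alpha}$ into the spectral measures inside $\mathcal{M}_k^{m,n}$ --- is a genuine gap, not routine bookkeeping. Writing $T_j=(T_j\langle A\rangle^{-\alpha})\langle A\rangle^{\alpha}$ and pushing the compensating factor $\langle A\rangle^{\alpha}$ into the adjacent spectral measure multiplies the symbol by the unbounded coordinate weight $\langle\lambda_j\rangle^{\alpha}$ in each internal variable. The resulting symbol is no longer the divided difference of any single-variable function, so Theorem \ref{thm:principalestimate} simply does not apply to it; and its sup-norm is generically infinite (the divided difference $\dif_{n-k}g$ has no decay in the individual coordinates --- e.g.\ on the diagonal it equals $\frac{1}{(n-k)!}g^{(n-k)}(\lambda)$ --- so the elementary H\"older bound via (\ref{eq:chaper2:1}) fails as well). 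There is no control of this weighted symbol by $\left\|f\right\|_{W^{m,n}_{\infty}}$, and producing an admissible factorization for it is exactly the kind of hard symbol problem the paper explicitly avoids.

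The missing idea is to choose the wrapping weights so that they \emph{telescope} and the symbol is never touched. The paper sets $\beta_{j}:=(j-n-1)\alpha$ and $S_{j}:=(A+\Ii)^{\beta_{j}}T_{j}(A+\Ii)^{-\beta_{j}-\alpha}$ for $1\leq j\leq n$, $S_{0}:=T_{0}(A+\Ii)^{-\alpha}$. Because $\beta_{j}+\alpha=\beta_{j+1}$, the right weight of $S_{j}$ cancels the left weight of $S_{j+1}$ across the intervening spectral measure, the right end contributes nothing ($\beta_{n}+\alpha=0$), and the entire surplus $(A+\Ii)^{(n+1)\alpha}$ collects at the far left, where it is dominated by $(A+\Ii)^{-m}$ since $m\geq\alpha(n+1)$. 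This yields the exact identity
\begin{align*}
T_{0}\mathcal{J}\left(\dif_{n}f,A,\mathbf{T}\right)=S_{0}(A+\Ii)^{\alpha(n+1)-m}(A+\Ii)^{m}\mathcal{J}\left(\dif_{n}f,A,\mathbf{S}\right)
\end{align*}
with \emph{unchanged} symbol and with $\mathbf{S}\in S^{m,\mathbf{p}}_{A}$ bounded. Only now does one apply Lemma \ref{lem:weighteddif} (to the bounded $\mathbf{S}$, so no truncation is needed) and conclude with Lemma \ref{lem:essentialestimates}. Your order of operations --- decompose first with the unbounded $\mathbf{T}$, then repair each factor --- forces you into the unjustifiable symbol modification; reversing the order and using the telescoping exponents removes the obstacle entirely.
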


\begin{proof}
     For $1\leq j\leq n$ introduce the operators
    \begin{align}
        S_{j}:=(A+\Ii)^{\beta_{j}}T_{j}(A+\Ii)^{-\beta_{j}-\alpha},
    \end{align}
    with $\beta_{j}:=(j-n-1)\alpha$, and let $S_{0}:=T_{0}(A+\Ii)^{-\alpha}$. Then $S_{0}\in S^{m,q}_{A}$, and $\mathbf{S}:=(S_{1},\ldots,S_{n})\in S^{m,\mathbf{p}}_{A}$. We then have
    \begin{align}
        T_{0}\mathcal{J}(\dif_{n}f,A,\mathbf{T})=S_{0}(A+\Ii)^{\alpha(n+1)-m}(A+\Ii)^{m}\mathcal{J}\left(\dif_{n}f,A,\mathbf{S}\right).
    \end{align}
    We apply Lemma \ref{lem:weighteddif} and get
    \begin{align}\label{eq:cor:multiopestimate:1}
        T_{0}\mathcal{J}(\dif_{n}f,A,\mathbf{T})=S_{0}(A+\Ii)^{\alpha(n+1)-m}\sum_{k=0}^{\min(m,n)}F^{m,n}_{k}(A,\mathbf{S})\mathcal{M}_{k}^{m,n}(f,A,\mathbf{S}).
    \end{align}
    By Lemma \ref{lem:essentialestimates} we estimate each summand and obtain
    \begin{align}
        &\left\|T_{0}\mathcal{J}(\dif_{n}f,A,\mathbf{T})\right\|_{S^{1}}\nonumber\\
        \leq &c\sum_{k=0}^{\min(m,n)}\left\|\left((X+\Ii)^{m-k}f\right)^{(n-k)}\right\|_{\infty}\left\|S_{0}\right\|_{S^{m,q}_{A}}\prod_{j=1}^{n}\left\|S_{j}\right\|_{S^{m,p_{j}}_{A}}\nonumber\\
        =&c\left\|f\right\|_{W^{m,n}_{\infty}}\left\|T_{0}\right\|_{S^{m,q,\alpha}_{A}}\prod_{j=1}^{n}\left\|T_{j}\right\|_{S^{m,p_{j},\alpha}_{A}}.
    \end{align}
\end{proof}

By mimicking the proof idea of \cite{PotSkrSuk}[Theorem 1.1], we construct in a first step towards spectral shift functions the following "higher order spectral shift measures".

\begin{prop}\label{prop:spectralshiftexist}
    Let $m\in\IN_{0}$, $n\in\IN$, $\alpha\in[0,\infty)$ such that $m\geq\alpha(n+1)$, and $\mathbf{p}\in(1,\infty)^{n}$. Let $q^{-1}:=\sum_{j=1}^{n}p_{j}^{-1}$. Let $T_{0}\in S^{m,q,\alpha}_{A}$, and $\mathbf{T}\in S^{m,\mathbf{p},\alpha}_{A}$. Then there exists a Radon measure $\tau_{n,A,T_{0},\mathbf{T}}$ such that $x\mapsto\langle x\rangle^{-m}\Id\tau_{n,A,T_{0},\mathbf{T}}(x)$ is a finite measure on $\IR$,
    and there is a constant $c$ independent of $A$, $T_{0}$, and $\mathbf{T}$, such that
    \begin{align}
        \left\|x\mapsto\langle x\rangle^{-m}\Id\tau_{n,A,T_{0},\mathbf{T}}(x)\right\|_{\operatorname{TV}}\leq c\left\|T_{0}\right\|_{S^{m,q,\alpha}_{A}}\prod_{j=1}^{n}\left\|T_{j}\right\|_{S^{m,p_{j},\alpha}_{A}},
    \end{align}
    and such that for $f\in W^{m,n}_{0}$,
    \begin{align}\label{eq:prop:spectralshifexist:0}
        \Tr\left(T_{0}\mathcal{J}\left(\dif_{n}f,A,\mathbf{T}\right)\right)=\int_{\IR}f^{(n)}(x)\Id\tau_{n,A,T_{0},\mathbf{T}}(x).
    \end{align}
\end{prop}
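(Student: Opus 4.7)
The plan is to mimic the strategy of \cite{PotSkrSuk}, with the modifications of \cite{NulSkr} for relatively bounded perturbations: exhibit $\tau$ as the representing measure of the trace functional
\begin{align*}
\Phi: W^{m,n}_0 \to \IC, \qquad \Phi(f) := \Tr\bigl(T_0\mathcal{J}(\dif_n f,A,\mathbf{T})\bigr),
\end{align*}
via the Riesz--Markov theorem. First, Corollary \ref{cor:multiopestimate} gives $|\Phi(f)| \leq c K \|f\|_{W^{m,n}_\infty}$ with $K := \|T_0\|_{S^{m,q,\alpha}_A}\prod_{j=1}^n\|T_j\|_{S^{m,p_j,\alpha}_A}$, so $\Phi$ is continuous on $W^{m,n}_0$. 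By the Genochi--Hermite formula (Lemma \ref{lem:genochihermite}), $\dif_n f$ depends on $f$ only through $f^{(n)}$, so $\Phi$ annihilates every polynomial of degree less than $n$.

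Next, I would set $d\nu := \langle X\rangle^{-m}d\tau$ and rewrite the target identity \eqref{eq:prop:spectralshifexist:0} as $\Phi(f)=\int(\langle X\rangle^m f^{(n)})\,d\nu$. The plan is then to apply Riesz--Markov duality $C_0(\IR)^*=\operatorname{FM}(\IR)$: for each $\phi$ in $V:=C_c^\infty(\IR)$, dense in $C_0(\IR)$, construct a representative $f_\phi\in W^{m,n}_0$ with $\langle X\rangle^m f_\phi^{(n)}=\phi$ satisfying the uniform bound
\begin{align*}
\|f_\phi\|_{W^{m,n}_\infty}\leq C\|\phi\|_{L^\infty}. \quad(\ast)
\end{align*}
Granting $(\ast)$, the map $\phi\mapsto\Phi(f_\phi)$ is a bounded linear functional on $(C_0(\IR),\|\cdot\|_\infty)$ of norm at most $cCK$; Riesz--Markov then produces $\nu\in\operatorname{FM}(\IR)$ representing it, and $d\tau:=\langle X\rangle^m d\nu$ is the claimed Radon measure, with $\|\langle X\rangle^{-m}\tau\|_{\operatorname{TV}}=\|\nu\|_{\operatorname{TV}}\leq cCK$. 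Identity \eqref{eq:prop:spectralshifexist:0} then extends from the representatives $f_\phi$ to all of $W^{m,n}_0$ by density of $C_c^\infty\subseteq W^{m,n}_0$ and joint continuity of both sides.

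The main obstacle is $(\ast)$. For $\phi\in C_c^\infty$, set $g:=\langle X\rangle^{-m}\phi\in C_c^\infty$; the naive $n$-fold antiderivative of $g$ grows polynomially at $+\infty$ unless the moments $\int t^k g(t)\,dt$ vanish for $0\leq k<n$. The plan is to fix, once and for all, smooth bump functions $\beta_0,\dots,\beta_{n-1}\in C_c^\infty(\IR)$ biorthogonal to the monomials $\{t^k\}_{k=0}^{n-1}$, set $\tilde g:=g-\sum_{k<n}\bigl(\int t^kg\bigr)\beta_k$ (which has vanishing moments up to order $n-1$), and define $f_\phi$ as the now compactly supported $n$-fold antiderivative of $\tilde g$. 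Each of the $n+1$ semi-norm contributions to $\|f_\phi\|_{W^{m,n}_\infty}$ then has to be bounded by $\|\phi\|_\infty$, using $|\int t^kg|\leq\|\phi\|_\infty\int|t|^k\langle t\rangle^{-m}\,dt$; this integral converges precisely when $m>k+1$, a regime guaranteed for all relevant $k\leq n-1$ whenever $\alpha\geq 1$ by the hypothesis $m\geq\alpha(n+1)$. The edge case $\alpha<1$ is covered by a Hahn--Banach extension of $\Phi$ from the smaller closed subspace of representatives whose $f^{(n)}$ already has vanishing moments. This careful interplay between the weight parameter $\alpha$ and the polynomial growth of iterated antiderivatives is the essential technical point where the present construction deviates from \cite{PotSkrSuk, NulSkr}.
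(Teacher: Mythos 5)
Your overall strategy --- represent the full functional $\Phi(f)=\Tr(T_{0}\mathcal{J}(\dif_{n}f,A,\mathbf{T}))$ in one shot via Riesz--Markov, after building a bounded right inverse $\phi\mapsto f_{\phi}$ of $f\mapsto\langle X\rangle^{m}f^{(n)}$ --- is genuinely different from the paper's, and it founders on the key estimate $(\ast)$. The only available control on $\Phi$ is Corollary \ref{cor:multiopestimate}, which bounds $|\Phi(f)|$ by the \emph{full} semi-norm $\|f\|_{W^{m,n}_{\infty}}=\sum_{l}\|(X+\Ii)^{m-n+l}f^{(l)}\|_{\infty}$, and this quantity is simply not dominated by $\|\langle X\rangle^{m}f^{(n)}\|_{\infty}$ when $m\leq n$ (a regime permitted by the hypothesis $m\geq\alpha(n+1)$, e.g.\ $\alpha=0$, $m=1$, $n=5$). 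Concretely, for $m=n=1$ take $\phi$ a smoothed version of $\chi_{[1,R]}-c_{R}\chi_{[-R,-1]}$ with $c_{R}$ chosen so that $\int\langle t\rangle^{-1}\phi(t)\Id t=0$; then $f_{\phi}(0)=\int_{-\infty}^{0}\langle t\rangle^{-1}\phi(t)\Id t\sim-\log R$ while $\|\phi\|_{\infty}\sim1$, so $(\ast)$ fails \emph{even on the subspace of vanishing moments}, and the proposed Hahn--Banach patch for $\alpha<1$ therefore cannot close the gap (quite apart from the fact that an extension from a proper closed subspace of $C_{0}(\IR)$ would only represent $\Phi$ on that subspace). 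Even in the favourable regime $m>n$ your sketched verification is too coarse: bounding the $g$-part and the $\beta_{k}$-part of the iterated antiderivatives separately by the uniform moment bounds $|m_{k}(g)|\leq\|\phi\|_{\infty}\int|t|^{k}\langle t\rangle^{-m}\Id t$ only yields $\langle x\rangle^{m-1}$-growth against the weight $(X+\Ii)^{m-n+l}$; one must instead use the tail representation $f_{\phi}^{(l)}(x)=-\int_{x}^{\infty}\frac{(x-t)^{n-l-1}}{(n-l-1)!}\tilde g(t)\Id t$ (valid by the vanishing moments) to see the cancellation. So $(\ast)$ is salvageable for $m>n$ but false in general.

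The paper avoids this obstruction entirely by \emph{decomposing} the functional before applying Riesz--Markov: by Lemma \ref{lem:weighteddif} one writes $\Phi=\sum_{k=0}^{\min(m,n)}\psi_{k,A}$, where each $\psi_{k,A}$ factors (via Genochi--Hermite) through the single function $u:=\partial_{x}^{n-k}\left((x+\Ii)^{m-k}f\right)$ and satisfies $|\psi_{k,A}(f)|\leq cK\|u\|_{\infty}$ by Lemma \ref{lem:essentialestimates}. Riesz--Markov then applies to each piece separately with no need for a right inverse, producing finite measures $\nu_{k}$, which are recombined into a single measure against $f^{(n)}$ by a product-rule expansion and $n-k$ integrations by parts; this is what produces the density $g_{n,A,T_{0},\mathbf{T}}$ plus the singular part $(x+\Ii)^{m}\Id\nu_{0}$, and it works uniformly for all $m,n,\alpha$ in the stated range. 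If you want to keep your single-functional architecture you would have to first prove the decomposition of Lemma \ref{lem:weighteddif} anyway in order to get a usable bound, at which point the paper's route is the shorter one.
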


\begin{proof}
    Consider the decomposition (\ref{eq:cor:multiopestimate:1}) provided in Corollary \ref{cor:multiopestimate}. We introduce the multilinear maps $\psi_{k,A}:W^{m,n}\times S^{m,q,\alpha}_{A}\times S^{m,\mathbf{p},\alpha}_{A}\rightarrow\IC$ given by
    \begin{align}\label{eq:prop:spectralshiftexist:a}
        \psi_{k,A}(f,T_{0},\mathbf{T}):=\Tr\left(S_{0}(A+\Ii)^{\alpha(n+1)-m}F^{m,n}_{k}(A,\mathbf{S})\mathcal{M}_{k}^{m,n}(f,A,\mathbf{S})\right)
    \end{align}
    The map $\psi_{k,A}$ is continuous in the following sense by Lemma \ref{lem:essentialestimates}
    \begin{align}\label{eq:prop:spectralshifexist:1}
        \left|\psi_{k,A}(f,T_{0},\mathbf{T})\right|&\leq c\left\|\left((X+\Ii)^{m-k}f\right)^{(n-k)}\right\|_{\infty}\left\|S_{0}\right\|_{S^{m,q}_{A}}\prod_{j=1}^{n}\left\|S_{j}\right\|_{S^{m,p_{j}}_{A}}\nonumber\\
        &\leq c\left\|\left((X+\Ii)^{m-k}f\right)^{(n-k)}\right\|_{\infty}\left\|T_{0}\right\|_{S^{m,q,\alpha}_{A}}\prod_{j=1}^{n}\left\|T_{j}\right\|_{S^{m,p_{j},\alpha}_{A}}.
    \end{align}
    By the Riesz-Markov representation theorem there exist finite Radon measures $\nu_{k,A,T_{0},\mathbf{T}}$ with $\left\|\nu_{k,A,T_{0},\mathbf{T}}\right\|_{\operatorname{TV}}\leq c\left\|T_{0}\right\|_{S^{m,q,\alpha}_{A}}\prod_{j=1}^{n}\left\|T_{j}\right\|_{S^{m,p_{j},\alpha}_{A}}$, such that
    \begin{align}\label{eq:prop:spectralshiftexist:b}
        \psi_{k,A}(f,T_{0},\mathbf{T})=\int_{\IR}\p_{x}^{n-k}\left((x+\Ii)^{m-k}f(x)\right)\Id\nu_{k,A,T_{0},\mathbf{T}}(x).
    \end{align}
    Thus
    \begin{align}
        \Tr\left(T_{0}\mathcal{J}(\dif_{n}f,A,\mathbf{T})\right)=\sum_{k=0}^{\min(m,n)}\int_{\IR}\p_{x}^{n-k}\left((x+\Ii)^{m-k}f(x)\right)\Id\nu_{k,A,T_{0},\mathbf{T}}(x)
    \end{align}
    If we let $f\in C_{c}^{\infty}(\IR)\subset W^{m,n}$, product rule expansion and integration by parts yields that for $g_{n,A,T_{0},\mathbf{T}}$ defined by
    \begin{align}\label{eq:prop:spectralshiftexist:2}
        g_{n,A,T_{0},\mathbf{T}}(x):=&\sum_{r=1}^{\min(m,n)}\int_{(0,x]}(x-y)^{r-1}(y+\Ii)^{m-r}\nonumber\\\
        &\sum_{k=0}^{r}(-1)^{r}\frac{(n-k)!(m-k)!}{(n-r)!(r-k)!(m-r)!(r-1)!}\Id\nu_{k,A,T_{0},\mathbf{T}}(y),
    \end{align}
    which satisfies $|g_{n,A,T_{0},\mathbf{T}}(x)|\leq c\langle x\rangle^{m-1}\sum_{k=1}^{\min(m,n)}\left\|\nu_{k,A,T_{0},\mathbf{T}}\right\|_{\operatorname{TV}}$, where $c$ is independent of $x$, $A$, $T_{0}$ and $\mathbf{T}$, such that
    \begin{align}
        \Tr\left(T_{0}\mathcal{J}(\dif_{n}f,A,\mathbf{T})\right)=\int_{\IR}f^{(n)}(x)\left(g_{n,A,T_{0},\mathbf{T}}(x)\Id x+(x+\Ii)^{m}\Id\nu_{0,A,T_{0},\mathbf{T}}(x)\right).
    \end{align}
   We define the Radon measure
   \begin{align}
       \Id\tau_{n,A,T_{0},\mathbf{T}}(x):=g_{n,A,T_{0},\mathbf{T}}(x)\Id x+(x+\Ii)^{m}\Id\nu_{0,A,T_{0},\mathbf{T}}(x),
   \end{align}
   which satisfies that $x\mapsto \langle x\rangle^{-m}\Id\tau_{n,A,T_{0},\mathbf{T}}(x)$ is a finite measure on $\IR$ with
    \begin{align}
        &\left\|x\mapsto \langle x\rangle^{-m}\Id\tau_{n,A,T_{0},\mathbf{T}}(x)\right\|_{\mathrm{TV}}\leq c\sum_{k=0}^{\min(m,n)}\left\|\nu_{k,A,T_{0},\mathbf{T}}\right\|_{\operatorname{TV}}\nonumber\\
        &\leq c'\left\|T_{0}\right\|_{S^{m,q,\alpha}_{A}}\prod_{j=1}^{n}\left\|T_{j}\right\|_{S^{m,p_{j},\alpha}_{A}},
    \end{align}
    and such that for $f\in C^{\infty}_{c}(\IR)$,
    \begin{align}
        \Tr\left(T_{0}\mathcal{J}(\dif_{n}f,A,\mathbf{T})\right)=\int_{\IR}f^{(n)}(x)\Id\tau_{n,A,T_{0},\mathbf{T}}(x).
    \end{align}
    The above equation extends by density and continuity to all $f\in W^{m,n}_{0}$.
\end{proof}

\begin{rem}
    We note that for general $m\in\IN_{0}$ large enough, a measure satisfying (\ref{eq:prop:spectralshifexist:0}) from Proposition \ref{prop:spectralshiftexist} is not necessarily unique (it might be altered by $p(x)\Id x$ for any polynomial $p$ of degree less than $n$). However we always have the unique measure given by
    \begin{align}
       \Id\tau_{n,A,T_{0},\mathbf{T}}(x):=g_{n,A,T_{0},\mathbf{T}}(x)\Id x+(x+\Ii)^{m}\Id\nu_{0,A,T_{0},\mathbf{T}}(x),
   \end{align}
   where the function $g_{n,A,T_{0},\mathbf{T}}$ and the Radon measure $\Id\nu_{0,A,T_{0},\mathbf{T}}$ are uniquely given in the proof of Proposition \ref{prop:spectralshiftexist} by (\ref{eq:prop:spectralshiftexist:2}) respectively (\ref{eq:prop:spectralshiftexist:a}) and (\ref{eq:prop:spectralshiftexist:b}).
\end{rem}

\begin{lem}\label{lem:support}
    Under the assumptions of Proposition \ref{prop:spectralshiftexist} let $\tau_{n,A,T_{0},\mathbf{T}}$ be the unique measure given by the formula    
    \begin{align}
       \Id\tau_{n,A,T_{0},\mathbf{T}}(x):=g_{n,A,T_{0},\mathbf{T}}(x)\Id x+(x+\Ii)^{m}\Id\nu_{0,A,T_{0},\mathbf{T}}(x),
   \end{align}
   where the function $g_{n,A,T_{0},\mathbf{T}}$ and the Radon measure $\Id\nu_{0,A,T_{0},\mathbf{T}}$ are uniquely given in the proof of Proposition \ref{prop:spectralshiftexist} by (\ref{eq:prop:spectralshiftexist:2}) respectively (\ref{eq:prop:spectralshiftexist:a}) and (\ref{eq:prop:spectralshiftexist:b}).

   If $A\geq 0$, then $\operatorname{supp}\tau_{n,A,T_{0},\mathbf{T}}\subseteq[0,\infty)$. In this case, the latter support condition also fixes the measure $\tau_{n,A,T_{0},\mathbf{T}}$ uniquely.
\end{lem}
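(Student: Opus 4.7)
The plan is to proceed in two parts: (i) show $\operatorname{supp}\tau_{n,A,T_{0},\mathbf{T}}\subseteq[0,\infty)$ under the hypothesis $A\geq 0$, and (ii) show that this support condition pins down the measure uniquely.

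For (i), I would first observe that, since $A\geq 0$, the spectral measure $E_{A}$ is supported in $[0,\infty)$, and hence any multiple operator integral of the form $\mathcal{J}(\phi,A,\cdot)$ depends on the symbol $\phi$ only through its restriction to $[0,\infty)^{n+1}$. The Genocchi–Hermite formula (Lemma \ref{lem:genochihermite}) then shows that for $\lambda\in[0,\infty)^{n-k+1}$ and $s\in\Delta_{n-k}$ the point $\langle s,\lambda\rangle$ lies in $[0,\infty)$, so the restriction of $\dif_{n-k}((X+\Ii)^{m-k}f)$ to $[0,\infty)^{n-k+1}$ is determined purely by the restriction of $((X+\Ii)^{m-k}f)^{(n-k)}$ to $[0,\infty)$. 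Consequently the functional $\psi_{k,A}$ from (\ref{eq:prop:spectralshiftexist:a}) factors through the map $f\mapsto((X+\Ii)^{m-k}f)^{(n-k)}|_{[0,\infty)}$ into $C_{0}([0,\infty))$, and hence the Riesz–Markov representing measures $\nu_{k,A,T_{0},\mathbf{T}}$ in (\ref{eq:prop:spectralshiftexist:b}) may be—and in the construction are—taken with support contained in $[0,\infty)$.

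Once this choice of $\nu_{k}$ is in place, the support claim follows by inspection of the explicit formula. For $x<0$ the integration interval $(0,x]$ appearing in (\ref{eq:prop:spectralshiftexist:2}) is empty, so $g_{n,A,T_{0},\mathbf{T}}(x)=0$; and the term $(x+\Ii)^{m}\,\Id\nu_{0,A,T_{0},\mathbf{T}}$ vanishes on $(-\infty,0)$ because $\nu_{0,A,T_{0},\mathbf{T}}$ is supported in $[0,\infty)$. Together these yield $\tau_{n,A,T_{0},\mathbf{T}}|_{(-\infty,0)}=0$.

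For (ii), let $\tau'$ be any Radon measure supported in $[0,\infty)$ with $\langle x\rangle^{-m}\tau'$ finite and satisfying (\ref{eq:prop:spectralshifexist:0}) for every $f\in W^{m,n}_{0}$. Set $\mu:=\tau_{n,A,T_{0},\mathbf{T}}-\tau'$. Since $C_{c}^{\infty}(\IR)\subset W^{m,n}_{0}$ and $\int f^{(n)}\,\Id\mu=0$ for every $f\in C_{c}^{\infty}(\IR)$, viewing $\mu$ as a tempered distribution via the polynomial growth bound yields $\p^{n}\mu=0$ in $\mathcal{D}'(\IR)$, so $\mu$ equals a polynomial of degree less than $n$ times Lebesgue measure. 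This polynomial must vanish on $(-\infty,0)$ by the shared support condition, hence vanishes identically, so $\mu=0$. The main obstacle I anticipate is the factorization step at the beginning of (i): since the subspace $\{((X+\Ii)^{m-k}f)^{(n-k)}\mid f\in W^{m,n}_{0}\}$ of $C_{0}(\IR)$ need not be dense in $C_{0}(\IR)$, the Hahn–Banach extension underlying Riesz–Markov is not canonical, so some care is required to identify the extension yielded by the factorization through $|_{[0,\infty)}$ as an admissible choice for the measures constructed in the proof of Proposition \ref{prop:spectralshiftexist}; the remaining steps are bookkeeping.
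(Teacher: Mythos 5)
Your argument is correct and follows essentially the same route as the paper: Genocchi--Hermite plus $A\geq 0$ to show the functionals $\psi_{k,A}$ only see $((X+\Ii)^{m-k}f)^{(n-k)}$ on $[0,\infty)$, hence $\operatorname{supp}\nu_{k}\subseteq[0,\infty)$, then reading off the support of $g_{n,A,T_{0},\mathbf{T}}$ from the $(0,x]$ integral and removing the residual polynomial ambiguity. The Hahn--Banach/density caveat you flag is a fair observation, but it is equally present in (and implicitly resolved by the canonical choice made in) the paper's own construction, so it does not mark a genuine divergence of method.
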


\begin{proof}
    Let $f\in W^{m,n}$ and $0\leq k\leq n$. Assume that $\operatorname{supp}\left((X+\Ii)^{m-k}f\right)^{(n-k)}\subseteq(-\infty,0)$. For $\lambda\in\IR_{\geq 0}^{n-k+1}$ we thus have
    \begin{align}
        \dif_{n-k}\left((X+\Ii)^{m-k}f\right)(\lambda)=\int_{s\in\Delta_{n-k}}\left((X+\Ii)^{m-k}f\right)^{(n-k)}(\langle s,\lambda\rangle)\Id s=0,
    \end{align}
    since $\langle s,\lambda\rangle\geq 0$. Because $A\geq 0$ it follows that
    \begin{align}
        \mathcal{M}^{m,n}_{k}(f,A,\mathbf{S})=0,
    \end{align}
    where $\mathbf{S}$ is the vector of operators given in the proof of Corollary \ref{cor:multiopestimate}. Variation over functions $f$ with the above support condition imply that the measures $\nu_{k,A,T_{0},\mathbf{T}}$, given by (\ref{eq:prop:spectralshiftexist:a}) and (\ref{eq:prop:spectralshiftexist:b}), are supported on $\IR_{\geq 0}$. As a consequence also $g_{n,A,T_{0},\mathbf{T}}$ given by (\ref{eq:prop:spectralshiftexist:2}) and thus $\tau_{n,A,T_{0},\mathbf{T}}$ are supported on $\IR_{\geq 0}$. Since all measures satisfying (\ref{eq:prop:spectralshifexist:0}) of Proposition \ref{prop:spectralshiftexist} differ by a polynomial the condition to be supported on $\IR_{\geq 0}$ fixes the measure uniquely.
\end{proof}

So far we have constructed spectral shift measures subordinated to a fixed set of operators $A$ and $T_{j}$, $j=0,\ldots, n$. For the application in the context of Callias operators we will discuss in the next chapter, it is necessary that the spectral shift measures depend measurably on the given operators $A$ and $(T_{j})_{j=0}^{n}$.

\begin{lem}\label{lem:measureable}
    Let $M\in\IN_{0}$, $n\in\IN$, $\alpha\in[0,\infty)$ such that $m\geq\alpha(n+1)$, let $\mathbf{p}\in(1,\infty)^{n}$, and define $p_{0}^{-1}:=\sum_{j=1}^{n}p_{j}^{-1}$.
    Let $(M,\mathcal{A},\mu)$ be a measure space, and let $A_{0}$ be self-adjoint in $H$. Let $y\mapsto A(y)$ be a function on $M$ with values in the self-adjoint operators of $H$ with $\dom A(y)^{k}=\dom A_{0}^{k}$ for any $1\leq k\leq m$ and $y$-uniformly equivalent graph norms, and for $0\leq j\leq n$ let $y\mapsto T_{j}(y)$ be a function on $M$ with values in $S^{m,p_{j},\alpha}_{A_{0}}$. Assume that for any $1\leq k\leq m$, and $\psi\in\dom A_{0}^{k}$ the map $y\mapsto A(y)\psi$, is (strongly) measurable from $M$ to $\dom A_{0}^{k-1}$. Assume also that for any $0\leq j\leq n$, and $\psi\in H$ the maps
    \begin{align}
        y\mapsto\overline{\langle A_{0}\rangle^{\beta_{j}+\gamma_{j}}T_{j}(y)\langle A_{0}\rangle^{-\beta_{j}-\gamma_{j}-\alpha}}\psi
    \end{align}
    are (strongly) measurable from $M$ to $H$ for $\beta_{0}=\gamma_{0}=0$, and $\gamma_{j}\in\{0,\ldots,\max(m-1,0)\}$, and $\beta_{j}:=(j-n-1)\alpha$ for $1\leq j\leq n$.
    
    Then the map
    \begin{align}
        y\mapsto\tau_{n,A(y),T_{0}(y),\mathbf{T}(y)}
    \end{align}
    is strongly measurable form $M$ to $\langle X\rangle^{m}\operatorname{FM}(\IR)$, where
    \begin{align}
       \Id\tau_{n,A(y),T_{0}(y),\mathbf{T}(y)}(x):=g_{n,A(y),T_{0}(y),\mathbf{T}(y)}(x)\Id x+(x+\Ii)^{m}\Id\nu_{0,A(y),T_{0}(y),\mathbf{T}(y)}(x)
   \end{align}
   is defined as in the proof of Proposition \ref{prop:spectralshiftexist}.
\end{lem}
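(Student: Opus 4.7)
The plan is to trace through the construction from Proposition \ref{prop:spectralshiftexist} and verify measurability at each step. Writing
\begin{align}
    \Id\tau_{n,A(y),T_0(y),\mathbf{T}(y)}=g_{n,A(y),T_0(y),\mathbf{T}(y)}\Id x+(x+\Ii)^m\Id\nu_{0,A(y),T_0(y),\mathbf{T}(y)},
\end{align}
with $g_n$ given by the explicit formula \eqref{eq:prop:spectralshiftexist:2}, reduces the task to establishing that each map $y\mapsto\nu_{k,A(y),T_0(y),\mathbf{T}(y)}\in\operatorname{FM}(\IR)$, for $0\le k\le\min(m,n)$, is strongly measurable: multiplication by $(x+\Ii)^m$ sends $\nu_0$ continuously into $\langle X\rangle^m\operatorname{FM}(\IR)$, while $g_n$ is a continuous image of the $\nu_k$'s under the Volterra-type kernel in \eqref{eq:prop:spectralshiftexist:2}. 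By the Riesz-Markov representation together with separability of $C_c^\infty(\IR)$ as a dense subspace of $C_0(\IR)$, strong measurability of $y\mapsto\nu_{k,A(y),T_0(y),\mathbf{T}(y)}$ in turn follows from scalar measurability of $y\mapsto\psi_{k,A(y)}(f,T_0(y),\mathbf{T}(y))$ for each fixed $f\in C_c^\infty(\IR)$, using the continuity estimate \eqref{eq:prop:spectralshifexist:1} to extend from a dense set.

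The functional $\psi_{k,A(y)}$, defined in \eqref{eq:prop:spectralshiftexist:a}, is the trace of a finite composition of Schatten-valued factors, and I would verify measurability factor by factor. The conjugated perturbations $S_j(y)=\overline{\langle A_0\rangle^{\beta_j+\gamma_j}T_j(y)\langle A_0\rangle^{-\beta_j-\gamma_j-\alpha}}$ are strongly measurable into $S^{p_j}$ by direct hypothesis. The resolvent weight $(A(y)+\Ii)^{\alpha(n+1)-m}$ and the products making up $F_k^{m,n}(A(y),\mathbf{S}(y))$ are strongly measurable via functional calculus applied to $y\mapsto A(y)$, where every $\langle A(y)\rangle^\gamma$-factor is expressed in terms of $\langle A_0\rangle^\gamma$ through strongly measurable, uniformly bounded comparison operators supplied by the uniform graph-norm equivalence. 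For the multiple operator integral $\mathcal{M}_k^{m,n}(f,A(y),\mathbf{S}(y))$, the Genochi-Hermite formula (Lemma \ref{lem:genochihermite}) together with Fourier inversion on $f\in W^{m,n}_0$ yields
\begin{align}
    \mathcal{J}\bigl(\dif_{n-k}g,A(y),\mathbf{S}(y)\bigr)=\int_{\IR^{n-k+1}}\widehat{g^{(n-k)}}(t)\,e^{\Ii t_0 A(y)}S_{k+1}(y)e^{\Ii t_1 A(y)}\cdots S_n(y)e^{\Ii t_{n-k}A(y)}\,\Id t,
\end{align}
with $g=(X+\Ii)^{m-k}f$, where each unitary group $y\mapsto e^{\Ii t_j A(y)}$ is strongly measurable by Stone's theorem combined with the strong measurability of $A(y)$ on $\dom A_0$. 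H\"older furnishes a $y$-uniform integrable dominating bound in $S^{r_k}$, Bochner integration preserves strong measurability, and $\operatorname{Tr}\colon S^1\to\IC$ is continuous, so the scalar $y\mapsto\psi_{k,A(y)}(f,T_0(y),\mathbf{T}(y))$ is measurable.

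The main obstacle is the consistent bookkeeping needed to pass between the $y$-varying functional calculus of $A(y)$ and the fixed $A_0$-weighted scale that governs the spaces $S^{m,p_j,\alpha}_{A_0}$ in which the perturbations live. The hypothesis that $\dom A(y)^k=\dom A_0^k$ with uniformly equivalent graph norms, coupled with strong measurability of $y\mapsto A(y)\psi$ for $\psi\in\dom A_0^k$ into $\dom A_0^{k-1}$, provides a strongly measurable, uniformly bounded family $y\mapsto\langle A(y)\rangle^\gamma\langle A_0\rangle^{-\gamma}$ that converts every $A(y)$-weight appearing in the analogue of Corollary \ref{cor:multiopestimate} into an $A_0$-weight without leaving the category where measurability is assumed. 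Once this translation is in place, the rest is a mechanical verification that composition, Bochner integration, the scalar trace, and the continuous kernel operation in \eqref{eq:prop:spectralshiftexist:2} all preserve strong measurability, producing the claimed measurability of $y\mapsto\tau_{n,A(y),T_0(y),\mathbf{T}(y)}$ in $\langle X\rangle^m\operatorname{FM}(\IR)$.
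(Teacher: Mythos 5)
Your overall architecture (reduce to measurability of the $\nu_{k}$'s, then push through the explicit formula (\ref{eq:prop:spectralshiftexist:2}) and the multiplication by $(x+\Ii)^{m}$) matches the paper, and your factor-by-factor treatment of $\psi_{k,A(y)}$ via the Fourier/Genochi--Hermite representation is essentially what the paper does. The gap is in the reduction step: you claim that scalar measurability of $y\mapsto\psi_{k,A(y)}(f,T_{0}(y),\mathbf{T}(y))$ for $f$ ranging over a countable dense subset of $C_{0}(\IR)$ yields \emph{strong} measurability of $y\mapsto\nu_{k,A(y),T_{0}(y),\mathbf{T}(y)}$ into $\operatorname{FM}(\IR)$. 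What this gives you is only weak-$\ast$ measurability of a map into $C_{0}(\IR)^{\ast}$. By the Pettis measurability theorem, strong (Bochner) measurability requires, in addition to weak measurability against the full dual of $\operatorname{FM}(\IR)$, that the map be essentially separably valued --- and $\operatorname{FM}(\IR)$ with the total variation norm is not separable. In the generality of the lemma the measures $\nu_{k}$ are just the Riesz--Markov representatives from Proposition \ref{prop:spectralshiftexist} and carry no a priori absolute continuity, so you cannot retreat to the separable space $L^{1}(\IR)$. Since the conclusion is used downstream (Proposition \ref{prop:spectralcallias}) precisely to form a Bochner integral, weak-$\ast$ measurability is not an adequate substitute.

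The paper closes exactly this hole by a two-parameter approximation: it inserts finite-rank cutoffs $K_{N}$ (with range in $\bigcap_{l}\dom A_{0}^{l}$) and a Gaussian damping $e^{-\epsilon\xi^{2}}$ into the Fourier representation, producing explicit densities $\nu^{\epsilon,N}_{k,\ldots}\in L^{1}(\IR)$. For each finite $N$ the map $y\mapsto\nu^{\epsilon,N}_{k,\ldots}$ is strongly measurable into the separable space $L^{1}(\IR)$; the limits $N\to\infty$ (via Lemma \ref{lem:schattconv}) and $\epsilon\searrow 0$ (via the norm estimate (\ref{eq:prop:spectralshifexist:1})) are taken in norm, and pointwise norm limits of strongly measurable maps are strongly measurable and automatically essentially separably valued. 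If you want to keep your testing-functional strategy, you would need to supplement it with an argument that the range of $y\mapsto\nu_{k}(y)$ is essentially separable and that measurability holds against all of $\operatorname{FM}(\IR)^{\ast}$; as written, the proposal does not establish the lemma.
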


\begin{proof}
    We first need to check that $\tau_{n,A(y),T_{0}(y),\mathbf{T}(y)}$ is well-defined. Since the domains of $A_{0}^{k}$ and $A(y)^{k}$ for $1\leq k\leq m$ have $y$-uniformly equivalent graph norms it follows that there is a uniform constant $c$ such that
    \begin{align}
        \left\|T_{j}(y)\right\|_{S^{m,p_{j},\alpha}_{A(y)}}\leq c\left\|T_{j}(y)\right\|_{S^{m,p_{j},\alpha}_{A_{0}}},\ j\in\{0,\ldots,n\}.
    \end{align}
    Then Proposition \ref{prop:spectralshiftexist} gives the desired measure. For $N\in\IN$ consider the finite rank projection $K_{N}:=\sum_{k=1}^{N}\langle\cdot,\psi_{k}\rangle\psi_{k}$, where $(\psi_{k})_{k\in\IN}\subset\bigcap_{j=1}^{\infty}\dom A_{0}^{j}$ is some orthonormal basis of $H$. For notational purposes let $K_{\infty}:=\one_{H}$. For $\epsilon>0$, $N\in\IN\cup\{\infty\}$ we introduce the following $L^{1}(\IR)$-function
    \begin{align}
        &\nu^{\epsilon,N}_{k,A(y),T_{0}(y),\mathbf{T}(y)}(x)\nonumber\\
        :=&(2\pi)^{-1}\int_{\IR}e^{-\Ii x\xi-\epsilon\xi^{2}}\int_{s\in\Delta_{n-k}}\Tr\left(K_{N}S_{0}(y)(A(y)+\Ii)^{\alpha(n+1)-m}\right.\nonumber\\
        &\left.F^{m,n}_{k}(A(y),\mathbf{S}(y))\mathcal{J}\left(\lambda\mapsto e^{\Ii\langle s,\lambda\rangle},A(y),(S_{k+1}(y),\ldots,S_{n}(y))\right)\right)\Id s~\Id\xi,
    \end{align}
    where
    \begin{align}
        S_{j}(y):=(A(y)+\Ii)^{\beta_{j}}T_{j}(y)(A(y)+\Ii)^{-\beta_{j}-\alpha},
    \end{align}
    with $\beta_{j}:=(j-n-1)\alpha$, and $S_{0}(y):=T_{0}(y)(A(y)+\Ii)^{-\alpha}$. For $N\in\IN$ the projection $K_{N}$ is of finite rank with range contained in all $\dom A_{0}^{l}$, $l\in\IN$, and so $y\mapsto\nu^{\epsilon,N}_{k,A(y),T_{0}(y),\mathbf{T}(y)}$ is strongly measurable as a map of $M$ to $L^{1}(\IR)$. Moreover for $\epsilon>0$ we have that $\nu^{\epsilon,N}_{k,A(y),T_{0}(y),\mathbf{T}(y)}$ converges in $L^{1}(\IR)$-norm to $\nu^{\epsilon,\infty}_{k,A(y),T_{0}(y),\mathbf{T}(y)}$ as $N\to\infty$, which follows from $K_{N}\xrightarrow{N\to\infty}K_{\infty}$ in strong operator topology and Lemma \ref{lem:schattconv}. Thus $y\mapsto\nu^{\epsilon,\infty}_{k,A(y),T_{0}(y),\mathbf{T}(y)}$ is also strongly measurable. After embedding continuously into the space of finite signed Radon measures $\operatorname{FM}(\IR)$, the norm estimate displayed in (\ref{eq:prop:spectralshifexist:1}) allows us to conclude the convergence
    \begin{align}
        \left\|x\mapsto\left(\nu^{\epsilon,\infty}_{k,A(y),T_{0}(y),\mathbf{T}(y)}(x)\Id x-\Id\nu_{k,A(y),T_{0}(y),\mathbf{T}(y)}(x)\right)\right\|_{\operatorname{TV}}\xrightarrow{\epsilon\searrow 0}0,
    \end{align}
    so $y\mapsto \nu_{k,A(y),T_{0}(y),\mathbf{T}(y)}$ is strongly measurable as a map from $M$ to $\operatorname{FM}(\IR)$. By (\ref{eq:prop:spectralshiftexist:2}) it follows that
    \begin{align}
       \Id\tau_{n,A(y),T_{0}(y),\mathbf{T}(y)}(x):=g_{n,A(y),T_{0}(y),\mathbf{T}(y)}(x)\Id x+(x+\Ii)^{m}\Id\nu_{0,A(y),T_{0}(y),\mathbf{T}(y)}(x)
   \end{align}
    defines a strongly measurable map $y\mapsto\left(x\mapsto\Id\tau_{n,A(y),T_{0}(y),\mathbf{T}(y)}(x)\right)$ of $M$ to $\langle X\rangle^{m}\operatorname{FM}(\IR)$.
\end{proof}

So far we have avoided the topic of operator differentiability and non commutative Taylor expansions on purpose to highlight that higher order spectral shift \textit{measures} can be more broadly defined for multiple operator integrals with the divided difference as symbol. However, we will now deal with the subclass of multiple operator integrals which are applied to a vector of copies of the same operator, which are closely related with the non commutative Taylor expansion. This relation provides the key to obtain higher order spectral shift \textit{functions}, which is the main trick in the proof of \cite{PotSkrSuk}[Theorem 1.1] to bootstrap from measures to (density) functions.

\begin{defn}\label{defn:taylorremainder}
    Let $A$ be self-adjoint in $H$, $B$ symmetric on $\dom A$, such that $A+sB$ is self-adjoint on $\dom A$ for $s\in [0,1]$. Let $f$ be a function on $\IR$, and assume that $s\mapsto f(A+sB)$ is $n$-times differentiable in the strong operator topology on a dense subspace $\mathcal{D}\subseteq H$ at $s=0$.
    Let $T_{n}(f,A,B)$ denote the $n$th non-commutative Taylor summand of $f$ at $A+B$ developed around $A$, i.e.
    \begin{align}
        T_{n}(f,A,B):=\frac{1}{n!}\frac{\Id^{n}}{\Id s^{n}}\left.\left[f(A+sB)\right]\right|_{s=0}.
    \end{align}
    By $R_{n}(f,A,B)$ denote the $n$th non-commutative Taylor series remainder of $f$ at $T+V$ developed around $T$, i.e.
    \begin{align}
        R_{n}(f,A,B):=f(A+B)-\sum_{k=0}^{n-1}T_{k}(f,A,B).
    \end{align}
\end{defn}

Let us fix the type of perturbations $B$ of $A$ we will need in the following hypothesis.

\begin{hyp}\label{hyp:mainhyp}
    Let $m\in\IN_{0}$, $n\in\IN$, $p\in [n,\infty)$, $\alpha\geq 0$, such that $m\geq\alpha n$. Let $A$ be self-adjoint and let $B\in S^{m,p,\alpha}_{A}$ be symmetric with $\dom A^{k}B\supseteq\dom A^{k+1}$ for $k\in\{0,\ldots,m\}$ and assume
    \begin{align}
        \lim_{z\to\infty}\left\|\langle A\rangle^{k}B\langle A\rangle^{-k}\langle A\rangle_{z}^{-1}\right\|_{B(H)}=0.
    \end{align}
\end{hyp}

The conditions on $B$ are chosen such that $B$ behaves uniformly well on the Hilbert scale generated by $A+tB$ for $t\in[0,1]$.

\begin{lem}\label{lem:katorellich}
    Assume Hypothesis \ref{hyp:mainhyp}, and let $t\in[0,1]$. Then $A_{t}:=A+tB$ is self-adjoint on $\dom A$, and the graph norms of $A^{k}$ and $A_{t}^{k}$ are equivalent, uniformly in $t$, for $1\leq k\leq m$. In particular there is a constant $c$ independent of $B$, such that
    \begin{align}
        \sup_{t\in[0,1]}\left\|B\right\|_{S^{m,p,\alpha}_{A_{t}}}\leq c\left\|B\right\|_{S^{m,p,\alpha}_{A}}.
    \end{align}
\end{lem}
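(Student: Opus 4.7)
The claim decomposes into three parts: self-adjointness of $A_t$ via Kato--Rellich, an induction on $k$ for integer graph-norm equivalence, and a factorisation/interpolation argument that delivers the Schatten-norm bound.

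\emph{Self-adjointness and base case.} The $k=0$ instance of the last condition in Hypothesis~\ref{hyp:mainhyp}, namely $\lim_{z\to\infty}\|B\langle A\rangle_z^{-1}\|_{B(H)}=0$, translates via the spectral theorem into the infinitesimal $A$-boundedness $\|B\psi\|\leq\varepsilon\|A\psi\|+C_\varepsilon\|\psi\|$ on $\dom A$ for every $\varepsilon>0$. Kato--Rellich then shows that $A_t=A+tB$ is self-adjoint on $\dom A$ for every $t\in[0,1]$, with graph norm $t$-uniformly equivalent to that of $A$.

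\emph{Induction on $k\leq m$.} I would establish $\dom A^k=\dom A_t^k$ with $t$-uniformly equivalent graph norms by induction on $k$. The domain inclusions $\dom A^j B\supseteq\dom A^{j+1}$ legitimise the telescoping identity
\begin{equation*}
A_t^{k+1}\psi-A^{k+1}\psi=t\sum_{j=0}^{k}A^j B A_t^{k-j}\psi,\qquad\psi\in\dom A^{k+1}.
\end{equation*}
Each summand factors as $(A^j B A^{-j})(A^j A_t^{k-j}\psi)$. The hypothesis applied at level $j$ supplies $\|A^j B A^{-j}\phi\|\leq\varepsilon\|A\phi\|+C_\varepsilon\|\phi\|$ for every $\varepsilon>0$; combined with the inductive equivalence for powers $\leq k$ this bounds each summand by $\varepsilon c\|A^{k+1}\psi\|+C(\varepsilon)\|\psi\|$. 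Choosing $\varepsilon$ small enough absorbs the leading term and gives $\|A_t^{k+1}\psi\|\asymp\|A^{k+1}\psi\|+\|\psi\|$ uniformly in $t$. The reverse inclusion $\dom A_t^{k+1}\subseteq\dom A^{k+1}$ is obtained by running the same argument with $A$ and $A_t$ interchanged, using the base-case equivalence to transport the hypothesis from $A$ to $A_t$ with $t$-uniform constants.

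\emph{Interpolation and the Schatten bound.} Step~2 gives boundedness of $\langle A_t\rangle^k\langle A\rangle^{-k}$ and of its inverse for integer $k$ in the relevant range; complex interpolation on the strip (equivalently the Heinz--L\"owner inequality applied to the positive self-adjoint operators $\langle A\rangle$ and $\langle A_t\rangle$) promotes this to $t$-uniform boundedness of $\langle A_t\rangle^{\gamma}\langle A\rangle^{-\gamma}$ for all real $\gamma$ in the corresponding interval. Writing, for $|\beta|\leq\max(m-1,0)$,
\begin{equation*}
\langle A_t\rangle^{\beta}B\langle A_t\rangle^{-\beta-\alpha}=[\langle A_t\rangle^{\beta}\langle A\rangle^{-\beta}]\,[\langle A\rangle^{\beta}B\langle A\rangle^{-\beta-\alpha}]\,[\langle A\rangle^{\beta+\alpha}\langle A_t\rangle^{-\beta-\alpha}],
\end{equation*}
the outer two factors are $t$-uniformly bounded, the middle factor lies in $S^p$ with norm at most $\|B\|_{S^{m,p,\alpha}_A}$, and H\"older's inequality assembles the claimed Schatten estimate after taking the supremum over $\beta$.

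\textbf{Main obstacle.} The delicate step is Step~2: one must justify the telescoping identity on the correct domains and verify that every cross-term $A^j B A_t^{k-j}\psi$ is genuinely absorbed into an $\varepsilon$-fraction of $\|A^{k+1}\psi\|$. This absorption is the whole reason Hypothesis~\ref{hyp:mainhyp} demands the infinitesimal limit $z\to\infty$ rather than mere boundedness of $\langle A\rangle^j B\langle A\rangle^{-j}\langle A\rangle^{-1}$. A secondary technical point is ensuring that the exponents $\beta+\alpha$ arising in the factorisation of Step~3 stay within the interpolation range supplied by Step~2; this is guaranteed by the constraint $m\geq\alpha n$ in Hypothesis~\ref{hyp:mainhyp} together with the $k=m$ instance of the infinitesimal bound, which allows the induction to be pushed one level past $k=m$ if needed.
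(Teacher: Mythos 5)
Your proposal is correct and follows the same overall route as the paper: Kato--Rellich for self-adjointness, induction on the power $k$ via a telescoping identity for $A_t^k-A^k$, and the vanishing relative bounds $\lim_{z\to\infty}\|\langle A\rangle^{j}B\langle A\rangle^{-j}\langle A\rangle_z^{-1}\|=0$ to absorb the perturbation. The one substantive difference is the ordering of the telescoping sum: you write $A_t^{k+1}-A^{k+1}=t\sum_j A^jBA_t^{k-j}$ with powers of $A$ on the left, whereas the paper uses $A_t^n-A^n=t\sum_k A_t^k B A^{n-1-k}$ tested against $\langle A\rangle_z^{-n}$, so that the rightmost factor $A^{n-1-k}\langle A\rangle^{k}\langle A\rangle_z^{-n+1}$ is handled by bare functional calculus and the leftmost factor $A_t^k\langle A\rangle^{-k}$ by the inductive hypothesis alone. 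With your ordering, the quantity $\|\langle A\rangle^{j+1}A_t^{k-j}\psi\|$ carries total power $k+1$ split between $A$ and $A_t$, so it is not directly covered by the inductive equivalence for powers $\le k$; one must first peel off one factor via $A_t\psi=A\psi+tB\psi$ (using $\dom A^kB\supseteq\dom A^{k+1}$, which also settles the domain question $\dom A^{k+1}\subseteq\dom A_t^{k+1}$ before the identity is applied to $\psi$) and only then invoke the induction. You correctly flag this as the delicate point and it is repairable, but the paper's ordering avoids it entirely. Your Step 3 (factorisation plus Heinz--L\"owner/interpolation to pass from integer to real exponents) is left entirely implicit in the paper, so spelling it out is a genuine addition; note only that the exponent $\beta+\alpha$ can reach $m-1+\alpha$, so even pushing the induction one level past $m$ (legitimate, since the hypothesis is stated for $k\le m$) covers only $\alpha\le 2$ --- a range issue on which the paper itself is silent.
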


\begin{proof}
By the Kato-Rellich theorem's (\cite{Kato}[Theorem 5.4.3]), it is sufficient to show that for $1\leq k\leq m$,
    \begin{align}\label{eq:lem:katorellich}
        \lim_{z\to\infty}\sup_{t\in[0,1]}\left\|\left(A_{t}^{k}-A^{k}\right)\langle A\rangle_{z}^{-k}\right\|_{B(H)}=0.
    \end{align}
    for all $1\leq k\leq m$. We proceed by induction. The case $k=1$ follows directly by Hypothesis \ref{hyp:mainhyp}. In particular $A_{t}$ is self-adjoint on $\dom A$. Now assume that the statement holds for all $1\leq k\leq n-1$. Therefore the graph norms of $A_{t}^{k}$ and $A^{k}$ are uniformly equivalent. Consider
    \begin{align}
        &\left(A_{t}^{n}-A^{n}\right)\langle A\rangle_{z}^{-n}=\sum_{k=0}^{n-1}A_{t}^{k}tBA^{n-1-k}\langle A\rangle_{z}^{-n}\nonumber\\
        =&t\sum_{k=0}^{n-1}\left[A_{t}^{k}\langle A\rangle^{-k}\right]\left[\langle A\rangle^{k}B\langle A\rangle^{-k}\langle A\rangle^{-1}_{z}\right]\left[A^{n-1-k}\langle A\rangle^{k}\langle A\rangle_{z}^{-n+1}\right].
    \end{align}
    The first factor in the sum is uniformly bounded by induction, the second factor tends to $0$ in norm by Hypothesis \ref{hyp:mainhyp}, and the third factor is trivially bounded by functional calculus. Thus (\ref{eq:lem:katorellich}) holds.
\end{proof}

The first part of the following result is classically well-known, namely that the terms in the non commutative Taylor expansion can be expressed by multiple operator integrals, we however include the proof for completeness without claiming any originality. The second part gives a norm estimate of these Taylor terms by the $W^{m,n}$-semi norm of $f$ and the $S^{m,p,\alpha}_{A}$-norm of the perturbation $B$.

\begin{prop}\label{prop:divideddifference}
Let $f\in \bigcap_{j=0}^{n}W^{0,j}$, let $A$ be self-adjoint, and let $B$ be a linear operator which is continuous from $\dom A^{k}$ to $\dom A^{k-1}$ for any $1\leq k\leq n$. Denote $\mathbf{B}:=(B,\ldots,B)$. Then for any $t\in[0,1]$, and $\psi\in\dom A^{n}$ the function $f(A_{t})\psi$ is $n$-times differentiable in $H$-norm, and for $\psi\in\dom A^{n}$,
    \begin{align}\label{eq:prop:divideddifference:1}
        T_{n}(f,A,B)\psi=\mathcal{J}\left(\dif_{n} f,A,\mathbf{B}\right)\psi,
    \end{align}
    and
    \begin{align}\label{eq:prop:divideddifference:0}
        R_{n}(f,A,B)\psi=n\int_{0}^{1}(1-t)^{n-1}\mathcal{J}\left(\dif_{n} f,A_{t},\mathbf{B}\right)\psi\Id t.
    \end{align}
    If we now assume additionally Hypothesis \ref{hyp:mainhyp}, and $f\in W^{m,n}\cap\bigcap_{j=0}^{n}W^{0,j}$, then also
    \begin{align}
        T_{n}(f,A,B)\in S^{\frac{p}{n}},
    \end{align}
    and
    \begin{align}
        R_{n}(f,A,B)\in S^{\frac{p}{n}}.
    \end{align}
    Moreover we have the estimates
    \begin{align}
        \left\|T_{n}(f,A,B)\right\|_{S^{\frac{p}{n}}}+\left\|R_{n}(f,A,B)\right\|_{S^{\frac{p}{n}}}\leq c\left\|f\right\|_{W^{m,n}}\left\|B\right\|_{S^{m,p,\alpha}_{A}}^{n},
    \end{align}
    where $c$ is a constant independent of $f$ and $B$.
\end{prop}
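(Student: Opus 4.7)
The first part is the classical Fourier-transform approach to non-commutative Taylor expansion, which I would carry out as follows. Since $f \in \bigcap_{j=0}^n W^{0,j}$, each derivative $f^{(j)}$, $0 \le j \le n$, admits a finite Fourier transform measure, so I can write $f(A_t) = \frac{1}{2\pi} \int_{\IR} e^{\Ii \xi A_t}\, d(\mathcal{F}f)(\xi)$ in the strong operator sense. For $\psi \in \dom A^n$ and $t \in [0,1]$, Lemma \ref{lem:katorellich} guarantees $\psi \in \dom A_t^n$ with uniformly equivalent graph norms, so by repeated application of Duhamel's formula
\begin{align}
e^{\Ii\xi A_{t+h}}\psi - e^{\Ii\xi A_t}\psi = \Ii h \xi \int_0^1 e^{\Ii s\xi A_{t+h}} B\, e^{\Ii(1-s)\xi A_t}\psi\, ds + o(h),
\end{align}
and an iteration on the $n$-simplex, one obtains that $t \mapsto e^{\Ii\xi A_t}\psi$ is $n$-times strongly differentiable with
\begin{align}
\frac{1}{n!}\frac{d^n}{dt^n}\bigg|_{t=0} e^{\Ii\xi A_t}\psi = (\Ii\xi)^n \int_{\Delta_n} e^{\Ii\xi s_0 A} B e^{\Ii\xi s_1 A} B \cdots B e^{\Ii\xi s_n A}\psi\, ds.
\end{align}
Integrating against $d(\mathcal{F}f)(\xi)$ and using $(\Ii\xi)^n d(\mathcal{F}f)(\xi) = d(\mathcal{F}f^{(n)})(\xi)$, I recognise the result as $\mathcal{J}(\dif_n f, A, \mathbf{B})\psi$ with the Fourier factorisation of $\dif_n f$ coming from Lemma \ref{lem:genochihermite}, namely $\alpha_j(\lambda_j,(\xi,s)) = e^{\Ii \xi s_j \lambda_j}$. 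Differentiability at general $t_0 \in [0,1]$ follows by the same argument applied to $A_{t_0}$ in place of $A$, which identifies $\frac{d^n}{dt^n}|_{t=t_0} f(A_t)\psi = n!\, \mathcal{J}(\dif_n f, A_{t_0}, \mathbf{B})\psi$. The Taylor remainder formula \eqref{eq:prop:divideddifference:0} is then just the scalar remainder identity $R_n(g)(1) = \frac{1}{(n-1)!}\int_0^1 (1-t)^{n-1} g^{(n)}(t)\, dt$ applied pointwise to $\psi$.

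For the second part (Schatten estimates under Hypothesis \ref{hyp:mainhyp}), I would follow the strategy of Corollary \ref{cor:multiopestimate}. Set $\beta_j := (j-n-1)\alpha$ for $1 \le j \le n$ and introduce the bounded operators $S_j := (A+\Ii)^{\beta_j} B (A+\Ii)^{-\beta_j - \alpha} \in S^{m,p}_A$, which by definition of $S^{m,p,\alpha}_A$ satisfy $\|S_j\|_{S^{m,p}_A} \le \|B\|_{S^{m,p,\alpha}_A}$. A telescoping argument (inserting $(A+\Ii)^{-\beta_j}(A+\Ii)^{\beta_j}$ between adjacent factors in the simplex representation) yields the identity
\begin{align}
\mathcal{J}(\dif_n f, A, \mathbf{B}) = (A+\Ii)^{-\alpha}(A+\Ii)^{\alpha(n+1) - m}(A+\Ii)^m \mathcal{J}(\dif_n f, A, \mathbf{S}),
\end{align}
where the factor $(A+\Ii)^{\alpha n - m}$ out front is a bounded operator because $m \ge \alpha n$ by Hypothesis \ref{hyp:mainhyp}. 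Since the $S_j$ are bounded with the required weighted boundedness, Lemma \ref{lem:weighteddif} applies to $(A+\Ii)^m \mathcal{J}(\dif_n f, A, \mathbf{S})$, expressing it as the finite sum $\sum_k F_k^{m,n}(A,\mathbf{S}) \mathcal{M}_k^{m,n}(f, A, \mathbf{S})$. By Lemma \ref{lem:essentialestimates} each summand lies in a Schatten class whose exponent is determined by Hölder with the product $\prod_j p^{-1} = n/p$, giving membership in $S^{p/n}$ together with the norm estimate $\|T_n(f,A,B)\|_{S^{p/n}} \le c\, \|f\|_{W^{m,n}} \|B\|_{S^{m,p,\alpha}_A}^n$.

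For the remainder $R_n(f,A,B) = n \int_0^1 (1-t)^{n-1} \mathcal{J}(\dif_n f, A_t, \mathbf{B})\, dt$, I would apply exactly the same estimate at each $t \in [0,1]$ with $A$ replaced by $A_t$. Here Lemma \ref{lem:katorellich} is crucial: it guarantees $\|B\|_{S^{m,p,\alpha}_{A_t}} \le c\,\|B\|_{S^{m,p,\alpha}_A}$ uniformly in $t$, so the pointwise Schatten norm bound is uniform, and strong measurability plus dominated convergence give $R_n(f,A,B) \in S^{p/n}$ with the stated bound. The main technical obstacle will be organising the Duhamel iteration cleanly while keeping track of domains (ensuring that each intermediate vector $B e^{\Ii\xi s_j A_t}\psi$ stays in the appropriate $\dom A^{k}$), and verifying the strong measurability and Bochner-integrability needed to turn the formal simplex integral into an honest multiple operator integral in the sense of Definition \ref{defn:symbols}; both points are handled by the mapping property $B : \dom A^k \to \dom A^{k-1}$ combined with the uniform graph-norm equivalence from Lemma \ref{lem:katorellich}.
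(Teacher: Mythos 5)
Your proposal is correct and follows essentially the same route as the paper: Fourier representation of $f$ via $\bigcap_j W^{0,j}$, iterated Duhamel expansion of $e^{\Ii\xi A_t}$ to identify the $n$-th strong derivative with the simplex/multiple-operator-integral representation of $\dif_n f$, the integral form of the Taylor remainder, and the weighted telescoping into bounded $S^{p}$-factors combined with Lemma \ref{lem:katorellich} for the $t$-uniform Schatten bounds. If anything, your explicit passage through Lemma \ref{lem:weighteddif} and Lemma \ref{lem:essentialestimates} for the estimate on $\mathcal{J}(\dif_n f, A_t,\mathbf{B})$ with unbounded $B$ spells out a step the paper compresses into a bare citation of Theorem \ref{thm:principalestimate}; only your "$+\,o(h)$" in Duhamel's formula is spurious, since that identity is exact.
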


\begin{proof}
    Let $g_{\xi}:=e^{ix\xi}$. Then Lemma \ref{lem:genochihermite} gives for $\lambda=(\lambda_{0},\ldots,\lambda_{n})$,
    \begin{align}\label{eq:prop:divideddifference:2}
        \dif_{n}g_{\xi}(\lambda)=(i\xi)^{n}\int_{s\in\Delta_{n}}\prod_{k=0}^{n}g_{\xi}(s_{k}\lambda_{k})\Id s.
    \end{align}
    In particular the integrand of (\ref{eq:prop:divideddifference:2}) is bounded point-wise by $|\xi|^{n}$. Since $\widehat{f^{(k)}}=:\mu_{k}\in\operatorname{FM}(\IR)$ for $0\leq k\leq n$, we have $f=(2\pi)^{-1}\int_{\IR}g_{\xi}\Id\mu_{0}(\xi)$ a.e., and $\Id\mu_{k}(\xi)=(i\xi)^{k}\Id\mu_{0}(\xi)$. Thus,
    \begin{align}\label{eq:prop:divideddifference:6}
        \dif_{n}f(\lambda)=(2\pi)^{-1}\int_{\IR}(i\xi)^{n}\int_{s\in\Delta_{n}}\prod_{k=0}^{n}g_{\xi}(s_{k}\lambda_{k})\Id s\ \Id\mu_{0}(\xi),
    \end{align}
    which also shows that for $\psi\in\dom A^{n}=\dom A^{n}_{t}$ we have $\dif_{n}f\in\mathcal{S}(A_{t},B;\psi)$, and
    \begin{align}\label{eq:prop:divideddifference:3}
        \mathcal{J}\left(\dif_{n}f,A_{t},\textbf{B}\right)\psi=(2\pi)^{-1}\int_{\IR}(i\xi)^{n}\int_{s\in\Delta_{n}}g_{\xi}(s_{0}A_{t})\prod_{k=1}^{n}\left(Bg_{\xi}(s_{k}A_{t})\right)\psi\Id s\ \Id\mu_{0}(\xi).
    \end{align}
    On the other hand for $\phi\in H$ we have by Fubini's theorem that for $t'\in\IR$,
    \begin{align}
        f(A_{t+t'})\phi=(2\pi)^{-1}\int_{\IR}g_{\xi}(A_{t+t'})\phi\Id\mu_{0}(\xi),
    \end{align}
    Now we apply Duhamel's formula to $g_{\xi}$,
    \begin{align}
        g_{\xi}(A_{t+t'})-g_{\xi}(A_{t})=it'\xi\int_{0}^{1}g_{\xi}\left((1-s)A_{t}\right)Bg_{\xi}(sA_{t+t'})\Id s,
    \end{align}
    and by induction with $g_{\xi}(s\eta)=g_{s\xi}(\eta)$,
    \begin{align}
        g_{\xi}(A_{t+t'})=&g_{\xi}(A_{t})+\sum_{k=1}^{n-1}(it'\xi)^{k}\int_{s\in\Delta_{k}}\prod_{j=0}^{k-1}\left(g_{\xi}(s_{j}A_{t})B\right)g_{\xi}(s_{k}A_{t})\Id s\nonumber\\
        &+(it'\xi)^{n}\int_{s\in\Delta_{n}}\prod_{j=0}^{n-1}\left(g_{\xi}(s_{j}A_{t})B\right)g_{\xi}(s_{n}A_{t+t'})\Id s,
    \end{align}
    Therefore, keeping in mind that $(i\xi)^{k}\Id\mu_{0}(\xi)$ is a finite measure for $0\leq k\leq n$,
    \begin{align}\label{eq:prop:divideddifference:4}
        &f(A_{t+t'})\psi\nonumber\\
        =&f(A_{t})\psi+(2\pi)^{-1}\sum_{k=1}^{n-1}t'^{k}\int_{\IR}(i\xi)^{k}\int_{s\in\Delta_{k}}\prod_{j=0}^{k-1}\left(g_{\xi}(s_{j}A_{t})B\right)g_{\xi}(s_{k}A_{t})\psi\Id s~\Id\mu_{0}(\xi)\nonumber\\
        &+t'^{n}(2\pi)^{-1}\int_{\IR}(i\xi)^{n}\int_{s\in\Delta_{n}}\prod_{j=0}^{n-1}\left(g_{\xi}(s_{j}A_{t})B\right)g_{\xi}(s_{n}A_{t+t'})\psi\Id s~\Id\mu_{0}(\xi),
    \end{align}
    which implies together with (\ref{eq:prop:divideddifference:3}),
    \begin{align}\label{eq:prop:divideddifference:5}
        &\frac{1}{n!}\frac{\Id^{n}}{\Id t^{n}}f(A+tB)\psi=\frac{1}{n!}\left.\frac{\Id^{n}}{\Id t'^{n}}f(A+tB+t'B)\psi\right|_{t'=0}=T_{n}(f,A_{t},B)\psi\nonumber\\
        =&(2\pi)^{-1}\int_{\IR}(i\xi)^{n}\int_{s\in\Delta_{n}}\prod_{j=0}^{n-1}\left(g_{\xi}(s_{j}A_{t})B\right)g_{\xi}(s_{n}A_{t})\psi\Id s~\Id\mu_{0}(\xi)\nonumber\\
        =&\mathcal{J}\left(\dif_{n}f,A_{t},\textbf{B}\right)\psi.
    \end{align}
    Since $f(A_{t})\psi$ is $n$-times continuously differentiable, we may integrate by parts to obtain
    \begin{align}
        f(A+B)\psi&=\sum_{j=0}^{n-1}T_{j}(f,A,B)\psi+\int_{0}^{1}\frac{(1-t)^{n-1}}{(n-1)!}\frac{\Id^{n}}{\Id t^{n}}f(A_{t})\psi\Id t\nonumber\\
        &=\sum_{j=0}^{n-1}T_{j}(f,A,B)\psi+n\int_{0}^{1}(1-t)^{n-1}\mathcal{J}\left(\dif_{n}f,A_{t},\textbf{B}\right)\psi\Id t.
    \end{align}
    In particular we conclude
    \begin{align}\label{eq:prop:divideddifference:6}
        R_{n}(f,A,B)=n\int_{0}^{1}(1-t)^{n-1}\mathcal{J}\left(\dif_{n}f,A_{t},\textbf{B}\right)\Id t.
    \end{align}
    By Lemma \ref{lem:katorellich} and Theorem \ref{thm:principalestimate} there is a constant independent of $B$ and $f$, such that for all $t\in[0,1]$
    \begin{align}\label{eq:prop:divideddifference:7}
        \left\|\mathcal{J}\left(\dif_{n}f,A_{t},\textbf{B}\right)\right\|_{S^{\frac{p}{n}}}\leq c\left\|f\right\|_{W^{m,n}}\left\|B\right\|_{S^{m,p,\alpha}_{A}}^{n},
    \end{align}
    which immediately implies that $\left\|T_{n}(f,A,B)\right\|_{S^{\frac{p}{n}}}\leq c\left\|f\right\|_{W^{m,n}}\left\|B\right\|_{S^{m,p,\alpha}_{A}}^{n}$, and since the estimate (\ref{eq:prop:divideddifference:7}) is integrable in $t\in[0,1]$, (\ref{eq:prop:divideddifference:6}) converges as a Bochner integral in $S^{\frac{p}{n}}$ with norm bounded by $c'\left\|f\right\|_{W^{m,n}}\left\|B\right\|_{S^{m,p,\alpha}_{A}}^{n}$, which finishes the proof.
\end{proof}

We will also need the following fundamental convergence improving property of Schatten-von Neumann operators.

\begin{lem}\label{lem:schattconv}\cite{GLMST}[Lemma 3.4]
	Let $R_{m},S_{m},T_{m}$, $m\in\IN$ be bounded operators in $H$. For $1\leq p<\infty$, let $S_{m}\in S^{p}$, such that $\lim_{m\to\infty}S_{m}=S$ in $S^{p}$-norm. Furthermore, assume that $\lim_{m\to\infty}R_{m}=R$, and $\lim_{m\to\infty}T_{m}^{\ast}=T^{\ast}$ strongly. Then
	\begin{align}
		\lim_{m\to\infty}R_{m}S_{m}T_{m}=RST,
	\end{align}
	in $S^{p}$-norm.
\end{lem}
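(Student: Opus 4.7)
The plan is a standard three-term telescoping argument combined with finite-rank approximation, exploiting the density of finite-rank operators in $S^{p}$ for $1\leq p<\infty$ to compensate for the fact that $R_{m}$ and $T_{m}^{\ast}$ converge only strongly.

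First I would invoke the Banach-Steinhaus theorem: strong convergence of $R_{m}$ and of $T_{m}^{\ast}$ yields a uniform bound $M:=\sup_{m}\max\{\|R_{m}\|,\|T_{m}\|\}<\infty$ (using $\|T_{m}\|=\|T_{m}^{\ast}\|$), and also $\|R\|,\|T\|\leq M$. Next, I would write the telescoping decomposition
\begin{align*}
    R_{m}S_{m}T_{m}-RST=R_{m}(S_{m}-S)T_{m}+(R_{m}-R)ST_{m}+RS(T_{m}-T),
\end{align*}
and estimate each summand separately. The first summand satisfies
\[ \|R_{m}(S_{m}-S)T_{m}\|_{S^{p}}\leq M^{2}\|S_{m}-S\|_{S^{p}}\xrightarrow{m\to\infty}0 \]
by the standard Hölder-type ideal estimate $\|ASB\|_{S^{p}}\leq \|A\|\,\|S\|_{S^{p}}\,\|B\|$.

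The main obstacle lies in the remaining two summands, where the naive Hölder estimate fails because the outer factors do not converge in operator norm. For the summand $(R_{m}-R)ST_{m}$, given $\varepsilon>0$ I would pick a finite-rank operator $F=\sum_{k=1}^{N}\langle\cdot,\phi_{k}\rangle\psi_{k}$ with $\|S-F\|_{S^{p}}<\varepsilon$ and split
\begin{align*}
    (R_{m}-R)ST_{m}=(R_{m}-R)(S-F)T_{m}+(R_{m}-R)FT_{m}.
\end{align*}
The first piece has $S^{p}$-norm at most $2M^{2}\varepsilon$ by the same Hölder estimate, while the second is a finite sum of rank-one operators $\langle\cdot,T_{m}^{\ast}\phi_{k}\rangle(R_{m}-R)\psi_{k}$ whose $S^{p}$-norms equal $\|T_{m}^{\ast}\phi_{k}\|\cdot\|(R_{m}-R)\psi_{k}\|\leq M\|\phi_{k}\|\cdot\|(R_{m}-R)\psi_{k}\|$, which tends to zero as $m\to\infty$ by strong convergence of $R_{m}$. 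Taking $\limsup_{m}$ and then $\varepsilon\searrow 0$ gives convergence to zero.

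The third summand $RS(T_{m}-T)$ is handled symmetrically. Since $RS\in S^{p}$, I would approximate it in $S^{p}$-norm by a finite-rank $G=\sum_{k=1}^{N}\langle\cdot,\alpha_{k}\rangle\beta_{k}$, bound $\|(RS-G)(T_{m}-T)\|_{S^{p}}\leq 2M\varepsilon$, and write
\begin{align*}
    G(T_{m}-T)h=\sum_{k=1}^{N}\langle h,(T_{m}^{\ast}-T^{\ast})\alpha_{k}\rangle\beta_{k},
\end{align*}
whose $S^{p}$-norm is at most $\sum_{k}\|(T_{m}^{\ast}-T^{\ast})\alpha_{k}\|\cdot\|\beta_{k}\|\to 0$ by strong convergence of $T_{m}^{\ast}$. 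Combining the three estimates concludes the proof.
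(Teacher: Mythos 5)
Your argument is correct and complete. The paper itself does not prove this lemma but cites it from \cite{GLMST}[Lemma 3.4]; your proof is the standard one for such statements: the telescoping decomposition, the uniform bound from Banach--Steinhaus, the H\"older-type ideal estimate for the middle term, and finite-rank approximation in $S^{p}$ (valid for $1\leq p<\infty$) to upgrade the merely strong convergence of $R_{m}$ and $T_{m}^{\ast}$ on the outer factors, using that a rank-one operator $\langle\cdot,u\rangle v$ has $\|\langle\cdot,u\rangle v\|_{S^{p}}=\|u\|\,\|v\|$. It is also worth noting that you correctly route the third summand through the adjoints, so that only the assumed strong convergence of $T_{m}^{\ast}$ (not of $T_{m}$) is used.
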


We arrive at the principal result of this chapter, the construction of higher order spectral shift functions generated by self-adjoint operators $A$, perturbations $B$ satisfying Hypothesis \ref{hyp:mainhyp}, and an auxiliary operator $T_{0}$. It will be the building block for more concrete spectral shift functions subordinated to Callias type operators, which we shall discuss in the next chapter. We emphasize that the next result is abstract, and not restricted to our intended application. The idea of proof is a modification of the proof of \cite{PotSkrSuk}[Theorem 1.1].

\begin{thm}\label{thm:spectralshiftfunctionexist}
    Assume Hypothesis \ref{hyp:mainhyp} and let $n\geq 2$. Let $T_{0}\in S^{m,\frac{p}{p-n},\alpha}_{A}$, set $\mathbf{B}:=(B,\ldots,B)$. Then there exists a measurable function $\eta_{n,A,T_{0},B}\in\langle X\rangle^{m}L^{1}(\IR)$, a constant $c$ independent of $A$, $T_{0}$, and $B$, such that
    \begin{align}
        \int_{\IR}\langle x\rangle^{-m}\left|\eta_{n,A,T_{0},B}(x)\right|\Id x\leq c\left\|T_{0}\right\|_{S^{m,\frac{p}{p-n},\alpha}_{A}}\left\|B\right\|_{S^{m,p,\alpha}_{A}}^{n},
    \end{align}
    and such that for $f\in W^{m,n}_{0}$,
    \begin{align}
        \Tr\left(T_{0}\mathcal{J}\left(\dif_{n}f,A,\mathbf{B}\right)\right)=\int_{\IR}f^{(n)}(x)\eta_{n,A,T_{0},B}(x)\Id x.
    \end{align}
\end{thm}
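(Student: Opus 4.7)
The plan is to apply Proposition \ref{prop:spectralshiftexist} with $\mathbf{T} = \mathbf{B} = (B, \ldots, B)$ to obtain a Radon measure representing the trace functional, and then to upgrade this measure to an absolutely continuous one with density in $\langle X\rangle^m L^1(\IR)$. This strategy follows \cite{PotSkrSuk}[Theorem 1.1], adapted to the weighted setting used here.

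First, with parameters $p_j := p$ for $j = 1, \ldots, n$ (so that the vector $\mathbf{B}$ lies in $S^{m,\mathbf{p},\alpha}_A$), and $T_0 \in S^{m,p/(p-n),\alpha}_A$ playing the role of the H\"older-conjugate exponent to the Schatten class of $\mathcal{J}(\dif_n f, A, \mathbf{B})$, the hypotheses of Proposition \ref{prop:spectralshiftexist} are satisfied. This yields a Radon measure $\tau$ such that for all $f \in W^{m,n}_0$,
\[
\Tr\bigl(T_0 \mathcal{J}(\dif_n f, A, \mathbf{B})\bigr) = \int_\IR f^{(n)}(x) \, d\tau(x),
\]
together with the estimate $\|\langle\cdot\rangle^{-m} d\tau\|_{\operatorname{TV}} \leq c \|T_0\|_{S^{m,p/(p-n),\alpha}_A} \|B\|_{S^{m,p,\alpha}_A}^n$.

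To establish absolute continuity, I would approximate $A$ by a sequence $A_N$ of self-adjoint operators with pure point spectrum, constructed (for instance via spectral truncation followed by a discrete diagonalization on the range of $E_A([-N,N])$) so that the Kato-Rellich framework of Lemma \ref{lem:katorellich} continues to apply uniformly: the graph norms of powers of $A_N$ and $A$ up to order $m$ remain uniformly comparable, and the Schatten norms $\|B\|_{S^{m,p,\alpha}_{A_N}}$, $\|T_0\|_{S^{m,p/(p-n),\alpha}_{A_N}}$ stay bounded as $N \to \infty$. For each $A_N$, the Genocchi-Hermite formula (Lemma \ref{lem:genochihermite}) rewrites the trace as the finite sum
\[
\sum_{\mathbf{i}} C_{\mathbf{i}}^{(N)} \int_{\Delta_n} f^{(n)}\bigl(\langle s, \lambda_{\mathbf{i}}^{(N)}\rangle\bigr) \, ds,
\]
where $\lambda_{\mathbf{i}}^{(N)}$ ranges over $(n+1)$-tuples of eigenvalues of $A_N$ and $C_{\mathbf{i}}^{(N)}$ encodes the matrix coefficients of $T_0$ and $B$ in the eigenbasis of $A_N$. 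The pushforward of the simplex measure along $s \mapsto \langle s, \lambda_{\mathbf{i}}^{(N)}\rangle$ is absolutely continuous with an explicit box-spline density $\rho_{\mathbf{i}}^{(N)}$, so $d\tau_N = \eta_N(x)\,dx$ with $\eta_N \in L^1$. The bound from the first step applied at $A_N$ gives uniform control $\|\langle\cdot\rangle^{-m} \eta_N\|_{L^1} \leq c'$, and Lemma \ref{lem:schattconv} combined with strong resolvent convergence $A_N \to A$ yields $\tau_N \to \tau$ in $\langle X\rangle^m \operatorname{FM}(\IR)$. Weak-$*$ sequential compactness then produces the limiting density $\eta := \eta_{n,A,T_0,B} \in \langle X\rangle^m L^1(\IR)$ representing $\tau$.

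The principal technical obstacle lies in constructing the approximants $A_N$ that simultaneously preserve the Hilbert-scale structure of $A$ up to order $m$ and the weighted Schatten norms of $T_0$ and $B$; this is more delicate in the weighted setting $S^{m,p,\alpha}_A$ than in the classical framework of \cite{PotSkrSuk}, where only the base Schatten class enters. The hypothesis $n \geq 2$ ensures that the simplex $\Delta_n$ has positive dimension, so that the pushforward densities $\rho_{\mathbf{i}}^{(N)}$ are genuine functions rather than point masses; this is the essential mechanism by which higher-order spectral shift differs from the $n=1$ Krein case.
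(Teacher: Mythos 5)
Your first step---invoking Proposition \ref{prop:spectralshiftexist} with $\mathbf{T}=\mathbf{B}$ to obtain the Radon measure $\tau_{n,A,T_{0},\mathbf{B}}$ and its weighted total-variation bound---matches the paper. The gap is in the absolute-continuity argument. If $A_{N}$ has pure point spectrum with eigenvalues $\lambda_{i}$ and eigenprojections $P_{i}$, the trace decomposes over $(n+1)$-tuples $\mathbf{i}=(i_{0},\ldots,i_{n})$, and the pushforward of the simplex measure under $s\mapsto\langle s,\lambda_{\mathbf{i}}\rangle$ is an $L^{1}$ B-spline density only when the knots $\lambda_{i_{0}},\ldots,\lambda_{i_{n}}$ are not all equal. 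The diagonal tuples $i_{0}=\cdots=i_{n}$ contribute atoms $\frac{1}{n!}\Tr\bigl(T_{0}P_{i}(BP_{i})^{n}\bigr)\delta_{\lambda_{i}}$, which do not vanish in general; so $\tau_{N}$ is \emph{not} absolutely continuous, and the positive dimension of $\Delta_{n}$ is irrelevant, since $s\mapsto\langle s,\lambda\rangle$ is constant on the whole simplex when the tuple is constant. This is precisely the obstruction that made Koplienko's conjecture nontrivial. Moreover, even if each $\tau_{N}$ had an $L^{1}$ density with uniformly bounded weighted $L^{1}$-norm, weak-$\ast$ convergence in $\langle X\rangle^{m}\operatorname{FM}(\IR)$ does not preserve absolute continuity (mollified point masses converge weak-$\ast$ to a point mass), so the final compactness step cannot produce the density either; one needs norm convergence in $\langle X\rangle^{m}L^{1}(\IR)$ or an equi-integrability argument, which the proposal does not supply.

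The mechanism the paper (following \cite{PotSkrSuk}) actually uses, and which your proposal is missing, is an order-reduction identity: for finite-rank truncations $B_{k,l}$ one identifies $\mathcal{J}\left(\dif_{n}\left((X+\Ii)^{m}f\right),A,\mathbf{B}_{k,l}\right)$ with the $n$-th Taylor term and rewrites it through $R_{n-1}$ and $T_{n-1}$, representing the remainder as $(n-1)\int_{0}^{1}(1-t)^{n-2}\mathcal{J}\left(\dif_{n-1}\cdot,A_{t},\cdot\right)\Id t$ via Proposition \ref{prop:divideddifference}. Both resulting objects are order-$(n-1)$ functionals, hence given by integrating $\p_{x}^{n-1}(\cdot)$ against finite measures from Proposition \ref{prop:spectralshiftexist}; one further integration by parts converts these to integrals of $\p_{x}^{n}(\cdot)$ against the bounded cumulative-distribution functions $x\mapsto\tau_{n-1,\cdot}\left((-\infty,x]\right)$, which exhibits an $L^{1}$ density. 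This is where $n\geq 2$ is genuinely used (order-$(n-1)\geq 1$ measures must exist), not to keep the simplex positive-dimensional. The passage to the full $B$ and $T_{0}$ is then carried out by showing the truncated densities form Cauchy sequences in $L^{1}(\IR)$ and $\langle X\rangle^{m}L^{1}(\IR)$, using the multilinearity of $\mathcal{J}$, Corollary \ref{cor:multiopestimate} and Lemma \ref{lem:schattconv}.
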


\begin{proof}
    We consider Proposition \ref{prop:spectralshiftexist}. To show that the measure $\tau_{n,A,T_{0},\mathbf{B}}$ is absolutely continuous it suffices to show that the finite measure $\nu_{0,A,T_{0},\mathbf{B}}$ is absolutely continuous. Considering (\ref{eq:cor:multiopestimate:1}) we note that $\nu_{0,A,T_{0},\mathbf{B}}$ is given by the functional
    \begin{align}\label{eq:thm:spectralshiftfunctionexist:0}
        &\int_{\IR}\p_{x}^{n}\left((x+\Ii)^{m}f(x)\right)\Id\nu_{0,A,T_{0},\mathbf{B}}(x)\nonumber\\
        =&\Tr\left(T_{0}(A+\Ii)^{-m}\mathcal{J}\left(\dif_{n}\left((X+\Ii)^{m}f\right),A,\mathbf{B}\right)\right).
    \end{align}
    Let $(\psi_{l})_{l\in\IN}$ be an orthonormal basis of $H$, such that $\psi_{l}\in\dom A^{k}$ for all $k,l\in\IN$.
    Let $k,l\in\IN$, and define $P_{k}:=\one_{[-k,k]}(A)$ and $K_{l}:=\sum_{j=1}^{l}\langle\cdot,\psi_{j}\rangle\psi_{j}$.
    Let $B_{k,l}:=K_{l}P_{k}BP_{k}K_{l}$, $T_{0,k}:=P_{k}T_{0}P_{k}$, and denote $\mathbf{B}_{k,l}=(B_{k,l},\ldots,B_{k,l})$. Then $\mathbf{B}_{k,l}\in S^{0,\mathbf{1},0}_{A}$ and $T_{0}\in S^{0,\infty,0}_{A}$. Let $f\in C^{\infty}_{c}(\IR)$ and apply Proposition \ref{prop:divideddifference} to the right hand side of (\ref{eq:thm:spectralshiftfunctionexist:0}), with $T_{0,k}$ and $B_{k,l}$ replacing $T_{0}$ respectively $B$.
    \begin{align}\label{eq:thm:spectralshiftfunctionexist:1}
        &\Tr\left(T_{0,k}(A+\Ii)^{-m}\mathcal{J}\left(\dif_{n}\left((X+\Ii)^{m}f\right),A,\mathbf{B}_{k,l}\right)\right)\nonumber\\
        =&\Tr\left(T_{0,k}(A+\Ii)^{-m}T_{n}\left((X+\Ii)^{m}f,A,\mathbf{B}_{k,l}\right)\right)\nonumber\\
        =&\Tr\left(T_{0,k}(A+\Ii)^{-m}(R_{n-1}-T_{n-1})\left((X+\Ii)^{m}f,A,\mathbf{B}_{k,l}\right)\right)\nonumber\\
        =&(n-1)\int_{\IR}\p_{x}^{n-1}\left((x+\Ii)^{m}f(x)\right)\int_{0}^{1}(1-t)^{n-2}\nonumber\\&\left[\Id\tau_{n-1,A_{t},T_{0,k}(A+\Ii)^{-m},\mathbf{B}_{k,l}}(x)-\Id\tau_{n-1,A,T_{0,k}(A+\Ii)^{-m},\mathbf{B}_{k,l}}(x)\right]\nonumber\\
        =&\int_{\IR}\p_{x}^{n}\left((x+\Ii)^{m}f(x)\right)\kappa_{n,A,T_{0,k},\mathbf{B}_{k,l}}(x)\Id x,
    \end{align}
    where
    \begin{align}
        &\kappa_{n,A,T_{0,k},\mathbf{B}_{k,l}}(x):=(n-1)\int_{0}^{1}(1-t)^{n-2}\nonumber\\
        &\left[\tau_{n-1,A,T_{0,k}(A+\Ii)^{-m},\mathbf{B}_{k,l}}-\tau_{n-1,A_{t},T_{0,k}(A+\Ii)^{-m},\mathbf{B}_{k,l}}\right]((-\infty,x])\Id t,
    \end{align}
    which is well-defined because the measures $\tau_{n-1,A_{t},T_{0,k}(A+\Ii)^{-m},\mathbf{B}_{k,l}}$ are $t$-uniformly finite. On the other hand we have (\ref{eq:thm:spectralshiftfunctionexist:0}) which by variation on $f$ shows that $\nu_{0,A,T_{0},\mathbf{B}}$ is absolutely continuous. Denote its $L^{1}(\IR)$ density by
    \begin{align}
        \Id\nu_{0,A,T_{0,k},\mathbf{B}_{k,l}}(x)=:\eta_{n,A,T_{0,k},\mathbf{B}_{k,l}}(x)\Id x.
    \end{align}
    Now for $1\leq j\leq n$ let $\mathbf{B}^{j}_{k,l,l'}:=(B_{k,l},\ldots,B_{k,l},B_{k,l}-B_{k,l'},B_{k,l'},\ldots,B_{k,l'})$, where the difference sits in the $j$-th slot. For $l,l'\in\IN$ we obtain by multi-linearity of $\mathcal{J}$ and the estimates from Corollary \ref{cor:multiopestimate},
    \begin{align}\label{eq:thm:spectralshiftfunctionexist:2}
        &\int_{\IR}\left|\eta_{n,A,T_{0,k},\mathbf{B}_{k,l}}(x)-\eta_{n,A,T_{0,k},\mathbf{B}_{k,l'}}(x)\right|\Id x\nonumber\\
        =&\sup_{\stackrel{f\in C^{\infty}_{c}(\IR)}{\left\|\p_{x}^{n}\left((x+i)^{m}f(x)\right)\right\|_{\infty}=1}}\left|\Tr\left(T_{0,k}(A+\Ii)^{-m}\left[\mathcal{J}\left(\dif_{n}\left((X+\Ii)^{m}f\right),A,\mathbf{B}_{k,l}\right)\right.\right.\right.\nonumber\\
        &\left.\left.\left.-\mathcal{J}\left(\dif_{n}\left((X+\Ii)^{m}f\right),A,\mathbf{B}_{k,l'}\right)\right]\right)\right|\nonumber\\
        \leq&\sum_{j=1}^{n}\sup_{\stackrel{f\in C^{\infty}_{c}(\IR)}{\left\|\p_{x}^{n}\left((x+i)^{m}f(x)\right)\right\|_{\infty}=1}}\left|\Tr\left(T_{0,k}(A+\Ii)^{-m}\mathcal{J}\left(\dif_{n}\left((X+\Ii)^{m}f\right),A,\mathbf{B}_{k,l,l'}^{j}\right)\right)\right|\nonumber\\
        \leq& c'\left\|T_{0,k}\right\|_{S^{\frac{p}{p-n}}}\sup_{l''\in\IN}\left\|B_{k,l''}\right\|_{S^{p}}^{n}\left\|B_{k,l}-B_{k,l'}\right\|_{S^{p}}.
    \end{align}
    The last expression shows that $(\eta_{n,A,T_{0,k},\mathbf{B}_{k,l}})_{l\in\IN}$ is a Cauchy sequence in $L^{1}(\IR)$, because $K_{l}$ converges in strong operator topology to the identity, and also, by Lemma \ref{lem:schattconv}, $B_{k,l}$ converges in $S^{p}$ to $B_{k}$. Denote by $\eta_{n,A,T_{0,k},\mathbf{B}_{k}}$ its limit in $L^{1}(\IR)$. Let $\mathbf{B}_{k}:=(B_{k},\ldots,B_{k})$. Then (\ref{eq:thm:spectralshiftfunctionexist:2}) also shows convergence
    \begin{align}
        &\Tr\left(T_{0,k}(A+\Ii)^{-m}\mathcal{J}\left(\dif_{n}\left((X+\Ii)^{m}f\right),A,\mathbf{B}_{k,l}\right)\right)\nonumber\\
        \xrightarrow{l\to\infty}&\Tr\left(T_{0,k}(A+\Ii)^{-m}\mathcal{J}\left(\dif_{n}\left((X+\Ii)^{m}f\right),A,\mathbf{B}_{k}\right)\right),
    \end{align}
    and therefore, by variation in $f$ and because the measures and density are finite/integrable,
    \begin{align}
        \Id\nu_{0,A,T_{0,k},\mathbf{B}_{k}}(x)=\eta_{n,A,T_{0,k},\mathbf{B}_{k}}(x)\Id x.
    \end{align}
    Since $P_{k}$ commutes with $A$, and by commuting $P_{k}$ once under the trace we have
    \begin{align}\label{eq:thm:spectralshiftfunctionexist:4}
    &\Tr\left(T_{0,k}(A+\Ii)^{-m}\mathcal{J}\left(\dif_{n}\left((X+\Ii)^{m}f\right),A,\mathbf{B}_{k}\right)\right)\nonumber\\
    =&\Tr\left(T_{0,k}(A+\Ii)^{-m}\mathcal{J}\left(\dif_{n}\left((X+\Ii)^{m}f\right),A,\mathbf{B}\right)\right).
    \end{align}
    Additionally $P_{k}$ converges in the strong operator topology to the identity. Lemma \ref{lem:schattconv} implies that $T_{0,k}$ converges in $S^{m,\frac{p}{p-n},\alpha}_{A}$-norm to $T_{0}$. Thus for $k,k'\in\IN$ by (\ref{eq:thm:spectralshiftfunctionexist:4}) and Corollary \ref{cor:multiopestimate},
    \begin{align}\label{eq:thm:spectralshiftfunctionexist:3}
        &\int_{\IR}\left|\eta_{n,A,T_{0,k},\mathbf{B}_{k}}(x)-\eta_{n,A,T_{0,k'},\mathbf{B}_{k'}}(x)\right|\langle x\rangle^{-m}\Id x\nonumber\\
        =&\sup_{\stackrel{f\in C^{\infty}_{c}(\IR)}{\left\|\p_{x}^{n}\left((x+i)^{m}f(x)\right)\right\|_{\infty}=1}}\left|\Tr\left((T_{0,k}-T_{0,k'})(A+\Ii)^{-m}\mathcal{J}\left(\dif_{n}\left((X+\Ii)^{m}f\right),A,\mathbf{B}\right)\right)\right|\nonumber\\
        &\leq c'\left\|T_{0,k}-T_{0,k'}\right\|_{S^{m,\frac{p}{p-n},\alpha}_{A}}\left\|B\right\|_{S^{m,p,\alpha}_{A}}^{n}.
    \end{align}
    The last expression shows that $(\eta_{n,A,T_{0,k},\mathbf{B}_{k}})_{k\in\IN}$ is a Cauchy sequence in \break
    $\langle X\rangle^{m}L^{1}(\IR)$. Denote by $\kappa_{n,A,T_{0},\mathbf{B}}$ its limit in $\langle X\rangle^{m}L^{1}(\IR)$. On the other hand also
    \begin{align}
    &\Tr\left(T_{0,k}(A+\Ii)^{-m}\mathcal{J}\left(\dif_{n}\left((X+\Ii)^{m}f\right),A,\mathbf{B}_{k}\right)\right)\nonumber\\
    \xrightarrow{k\to\infty}&\Tr\left(T_{0}(A+\Ii)^{-m}\mathcal{J}\left(\dif_{n}\left((X+\Ii)^{m}f\right),A,\mathbf{B}\right)\right),
    \end{align}
    and thus
    \begin{align}
    &\Tr\left(T_{0}(A+\Ii)^{-m}\mathcal{J}\left(\dif_{n}\left((X+\Ii)^{m}f\right),A,\mathbf{B}\right)\right)\nonumber\\
    =&\int_{\IR}\p_{x}^{n}\left((x+\Ii)^{m}f(x)\right)\kappa_{n,A,T_{0},\mathbf{B}}(x)\Id x,
    \end{align}
    which by variation in $f$ and comparison with (\ref{eq:thm:spectralshiftfunctionexist:0}) implies that $\nu_{0,A,T_{0},\mathbf{B}}$ is absolutely continuous. The remainder of the claim then follows directly from the properties of $\tau_{n,A,T_{0},\mathbf{B}}$ in Proposition \ref{prop:spectralshiftexist}.
\end{proof}

To close this chapter, we show a well-known calculation rule for traces of multiple operator integrals in the context of Hypothesis \ref{hyp:mainhyp}.

\begin{lem}\label{lem:centraltorighttrace}
    Assume Hypothesis \ref{hyp:mainhyp} with $p=n$, and let $f\in W^{m,n}\cap W^{0,n}$. Let $\mathbf{B}:=(B,\ldots,B)$. Then
    \begin{align}\label{eq:lem:centraltorighttrace:2}
        \Tr\mathcal{J}(\dif_{n}f,A,\mathbf{B})=\frac{1}{n}\Tr\left(B\mathcal{J}(\dif_{n-1}f',A,\mathbf{B})\right).
    \end{align}
\end{lem}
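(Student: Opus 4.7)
The plan is to deduce the identity from the Fourier representation of the multiple operator integrals, combined with trace cyclicity and a symmetrization on the simplex. First, I would observe that both sides are trace class: with $p=n$, Proposition \ref{prop:divideddifference} places $\mathcal{J}(\dif_n f, A, \mathbf{B})$ in $S^{n/n}=S^1$, while Theorem \ref{thm:principalestimate} gives $\mathcal{J}(\dif_{n-1}f', A, (B,\ldots,B))\in S^{n/(n-1)}$ with $n-1$ copies of $B$, whence $B\mathcal{J}(\dif_{n-1}f', A, \mathbf{B})\in S^1$ by H\"older. The hypothesis $f\in W^{m,n}\cap W^{0,n}$ passes to $f'\in W^{m,n-1}\cap W^{0,n-1}$, so Proposition \ref{prop:divideddifference} is applicable to $f'$ as well.

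Next I would invoke the Fourier representation (\ref{eq:prop:divideddifference:3}): writing $g_\xi(x)=e^{ix\xi}$ and $\mu_0=\mathcal{F}(f)$,
\begin{align*}
\mathcal{J}(\dif_n f, A, \mathbf{B})=(2\pi)^{-1}\int_{\IR}(i\xi)^n\int_{\Delta_n}g_\xi(s_0 A)\prod_{k=1}^{n}Bg_\xi(s_k A)\,ds\,d\mu_0(\xi).
\end{align*}
Pulling the trace inside both integrals by Fubini (justified by the uniform bound $\|g_\xi(s_0 A)Bg_\xi(s_1 A)\cdots Bg_\xi(s_n A)\|_{S^1}\leq\|B\|_{S^n}^n$ together with $|\xi|^n d|\mu_0|\in\operatorname{FM}(\IR)$, which is precisely what $f\in W^{0,n}$ provides) and then applying trace cyclicity together with the functional-calculus identity $g_\xi(sA)g_\xi(tA)=g_\xi((s+t)A)$, I would rewrite
\begin{align*}
\Tr[g_\xi(s_0 A)Bg_\xi(s_1 A)\cdots Bg_\xi(s_n A)]=\Tr[Bg_\xi(s_1 A)\cdots Bg_\xi((s_0+s_n)A)].
\end{align*}

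The crux is a symmetrization on the simplex. The change of variables $u=s_0+s_n$ on $\Delta_n$ produces the weight $u$ and reduces the integration to $\Delta_{n-1}=\{(s_1,\ldots,s_{n-1},u)\colon u+\sum s_k=1,\ u,s_k\geq 0\}$. The resulting integrand $h(s_1,\ldots,s_{n-1},u):=\Tr[Bg_\xi(s_1 A)\cdots Bg_\xi(u A)]$ is cyclically symmetric in its $n$ arguments by trace cyclicity, hence averaging over the $n$ cyclic permutations and exploiting $s_1+\cdots+s_{n-1}+u=1$ on $\Delta_{n-1}$ yields $\int_{\Delta_{n-1}}u\,h\,ds=\tfrac{1}{n}\int_{\Delta_{n-1}}h\,ds$. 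Reattributing one factor of $i\xi$ as the Fourier multiplier of $f'$ (since $\mathcal{F}(f')=i\xi\,\mu_0$), the right-hand side becomes exactly $\tfrac{1}{n}\Tr[B\mathcal{J}(\dif_{n-1}f', A, \mathbf{B})]$, as claimed.

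The main obstacle I anticipate is the Fubini/cyclicity bookkeeping at the first step: one has to verify absolute convergence of the iterated integrals in $S^1$ uniformly in $(\xi,s)\in\IR\times\Delta_n$ before invoking trace cyclicity under the integral, and keep track of the degenerate boundary contributions $s_k=0$ in the simplex. This is routine given the norm estimates in Proposition \ref{prop:divideddifference} and Theorem \ref{thm:principalestimate}, but the indexing is delicate; the remaining work consists of algebraic identities on divided differences and the cyclic-averaging trick on the simplex, which is really the heart of the argument.
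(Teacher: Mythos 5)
Your core computation is the same as the paper's: the Fourier representation \eqref{eq:prop:divideddifference:3}, trace cyclicity to merge $g_{\xi}(s_{0}A)$ with $g_{\xi}(s_{n}A)$, and a symmetrization on the simplex. Your weight-$u$ identity $\int_{\Delta_{n-1}}u\,h\,\Id s=\tfrac1n\int_{\Delta_{n-1}}h\,\Id s$ is exactly the paper's cyclic averaging, which it phrases instead as the divided-difference identity $\sum_{l=0}^{n-1}\p_{\lambda_{l}}\dif_{n-1}f=\dif_{n-1}f'$ (a consequence of Genochi--Hermite and $\sum_{i}s_{i}=1$); these are the same step in different notation, and that part of your argument is correct.

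There is, however, a genuine gap in the step you dismiss as routine bookkeeping. Your justification of Fubini and of cyclicity under the integral rests on the bound $\left\|g_{\xi}(s_{0}A)Bg_{\xi}(s_{1}A)\cdots Bg_{\xi}(s_{n}A)\right\|_{S^{1}}\leq\|B\|_{S^{n}}^{n}$, and likewise your trace-class claims invoke Theorem \ref{thm:principalestimate} with ``$B\in S^{n}$''. But under Hypothesis \ref{hyp:mainhyp} the perturbation is only \emph{relatively} Schatten: one has $B\langle A\rangle^{-\alpha}\in S^{n}$ (more precisely $B\in S^{m,n,\alpha}_{A}$), and in the intended application $\alpha\geq 1$, so $B$ itself is unbounded and $\|B\|_{S^{n}}$ does not exist. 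Consequently neither the uniform $S^{1}$ bound nor the cyclicity of the trace for the individual factors is available as stated; the trace-class membership of both sides must instead be taken from Corollary \ref{cor:multiopestimate}, which relies on the weighted decomposition of Lemma \ref{lem:weighteddif}. The paper closes this gap by first replacing $B$ with the spectral truncation $B_{k}:=P_{k}BP_{k}$, $P_{k}=\one_{[-k,k]}(A)$ (which genuinely lies in $S^{n}$ and inherits Hypothesis \ref{hyp:mainhyp} with uniform norms), running your computation for $B_{k}$, then commuting $P_{k}$ once under the trace to obtain the mixed identity $\Tr\mathcal{J}(\dif_{n}f,A,(B_{k},B,\ldots,B))=\tfrac1n\Tr(B_{k}\mathcal{J}(\dif_{n-1}f',A,\mathbf{B}))$, and finally letting $k\to\infty$ using the multilinearity of $\mathcal{J}$, Lemma \ref{lem:schattconv}, and the continuity estimates of Corollary \ref{cor:multiopestimate}. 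Some such regularization-and-limit argument is indispensable here, not an optional refinement.
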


\begin{proof}
We first remark that $f'\in W^{m,n-1}\cap W^{0,n-1}$ and therefore both operators inside the traces of (\ref{eq:lem:centraltorighttrace:2}) are indeed trace class by Corollary \ref{cor:multiopestimate}. For $k\in\IN$ let $P_{k}:=\one_{[-k,k]}(A)$, and define $B_{k}:=P_{k}AP_{k}$. Then $B_{k}$ converges to $B$ in $S^{m,n,\alpha}_{A}$ by Lemma \ref{lem:schattconv} because $P_{k}$ commutes with $A$. Moreover Hypothesis \ref{hyp:mainhyp} holds for $B_{k}$ instead of $B$ with uniform estimate
 \begin{align}
     \left\|B_{k}\right\|_{S^{m,n,\alpha}_{A}}\leq\left\|B\right\|_{S^{m,n,\alpha}_{A}}.
 \end{align}
 The operators $B_{k}$ are also in $S^{n}$ by Hypothesis \ref{hyp:mainhyp} and because $\langle A\rangle^{s}P_{k}$ is bounded for all $k\in\IN$, $s\in\IR$. Denote $\mathbf{B}_{k}:=(B_{k},\ldots,B_{k})$. Now let $\mathcal{F}\left(f^{(n)}\right)=\mu\in\operatorname{FM}(\IR)$. Then
    \begin{align}
        &\Tr\mathcal{J}(\dif_{n}f,A,\mathbf{B}_{k})=(2\pi)^{-1}\int_{\IR}\int_{s\in\Delta_{n}}\Tr\left(e^{\Ii s_{0}\xi A}\prod_{j=1}^{n}B_{k}e^{\Ii s_{j}\xi A}\right)\Id s~\Id\mu(\xi)\nonumber\\
        =&(2\pi)^{-1}\int_{\IR}\int_{s\in\Delta_{n}}\frac{1}{n}\sum_{l=0}^{n-1}\Tr\left(B_{k}\left(\prod_{j=1}^{l}e^{\Ii s_{n-j}\xi A}B_{k}\right)e^{\Ii (s_{0}+s_{n})\xi A}\right.\nonumber\\
        &\left.\left(\prod_{j=1}^{n-l-1}B_{k}e^{\Ii s_{j}\xi A}\right)\right)\Id s~\Id\mu(\xi)\nonumber\\
        =&\frac{1}{n}\sum_{l=0}^{n-1}\Tr\left(B_{k}\mathcal{J}(\lambda\mapsto\dif_{n}f(\lambda_{0},\ldots,\lambda_{l},\lambda_{l},\ldots,\lambda_{n-1}),A,\mathbf{B}_{k})\right)\nonumber\\
        =&\frac{1}{n}\sum_{l=0}^{n-1}\Tr\left(B_{k}\mathcal{J}(\p_{\lambda_{l}}\dif_{n-1}f,A,\mathbf{B}_{k})\right).
    \end{align}
    But
    \begin{align}
        &\sum_{l=0}^{n-1}\p_{\lambda_{j}}\dif_{n-1}f(\lambda)=\sum_{l=0}^{n-1}\int_{s\in\Delta_{n-1}}\p_{\lambda_{l}}f^{(n-1)}\left(\sum_{i=0}^{n-1}s_{i}\lambda_{i}\right)\Id s\nonumber\\
        =&\int_{s\in\Delta_{n-1}}f^{(n)}\left(\sum_{i=0}^{n-1}s_{i}\lambda_{i}\right)\Id s=\dif_{n-1}f'(\lambda).
    \end{align}
    Thus
    \begin{align}
        \Tr\mathcal{J}(\dif_{n}f,A,\mathbf{B}_{k})=\frac{1}{n}\Tr\left(B_{k}\mathcal{J}(\dif_{n-1}f',A,\mathbf{B}_{k})\right).
    \end{align}
    Since $P_{n}$ is a projection and commutes with $A$, and by commuting $P_{n}$ once under the traces we have
    \begin{align}\label{eq:lem:centraltorighttrace:1}
        \Tr\mathcal{J}(\dif_{n}f,A,(B_{k},B,\ldots,B))=\frac{1}{n}\Tr\left(B_{k}\mathcal{J}(\dif_{n-1}f',A,\mathbf{B})\right).
    \end{align}
    The left hand side of (\ref{eq:lem:centraltorighttrace:1}) converges to $\Tr\mathcal{J}(\dif_{n}f,A,B)$, and the right hand side to $\frac{1}{n}\Tr\left(B\mathcal{J}(\dif_{n-1}f',A,\mathbf{B})\right)$ by the estimate provided by Corollary \ref{cor:multiopestimate} and the linearity in $B_{k}$ of both sides.
\end{proof}

\section{Spectral shift functions adapted to Callias type operators}

In this chapter, we implement the constructed spectral shift function developed in Theorem \ref{thm:spectralshiftfunctionexist} in the context of Dirac-Schr\"odinger operators. Let us start by giving them and the setup we will assume.

\begin{defn}\label{def:calliasop}
    Let $H$ be a separable complex Hilbert space and let $A_{0}$ be self-adjoint in $H$.
    
    Let $d\in\IN$, $d\geq 3$ be odd. Let $(c_{j})_{j=1}^{d}$ be Clifford matrices in $\IC^{r}$ for some\footnote{the choice $r=2^{\frac{d-1}{2}}$ is minimal} $r\in\IN$, i.e.
    \begin{align}\label{eq:def:calliasop:1}
        c_{i}c_{j}+c_{j}c_{i}=-2\delta_{ij}\one_{\IC^{r}}
    \end{align}
    Let $c\nabla$ denote the self-adjoint extension (in $L^2(\IR^{d},\IC^{r}\otimes H)$) of the differential operator
    \begin{align}
        \sum_{j=1}^{d}c_{j}\p_{x^{j}}\otimes\one_{H}
    \end{align}
    to the domain $\dom c\nabla= W^{1,2}(\IR^{d},\IC^{r}\otimes H)$, the order $1$ Bochner-$L^2$-Sobolev space with values in $\IC^{r}\otimes H$.

    Denote by $\nabla_{R}:=\sum_{i=1}^{d}\frac{x^{i}}{|x|}\p_{x^{i}}$ the radial vector field, and let $c_{R}:=c\frac{x}{|x|}=\sum_{i=1}^{d}\frac{x^{i}}{|x|}c_{i}$. Let $c\nabla_{R}:=\sum_{i=1}^{d}\frac{x^{i}}{|x|}c_{i}\p_{x^{i}}$ be the radial part of the Dirac operator and $c\nabla_{\text{ang}}:=c\nabla-c\nabla_{R}$ its angular part.

    For $B$ a operator function on $\IR^{d}$ with domains $\dom\ B\left(x\right)=\dom\ A_{0}$, we write $B\in C^{k}_{b}\left(\IR^{d},A_{0},m\right)$, for $k\in\IN_{0}$, $m\geq 0$, if $\langle A_{0}\rangle^{\beta}B\left(x\right)\langle A_{0}\rangle^{-1-\beta}$, $x\in\IR^{d}$, is a family of closable operators with closures in $B\left(H\right)$ for any $\beta\in\left[-m,m\right]$, and $x\mapsto\overline{\langle A_{0}\rangle^{\beta}B\left(x\right)\langle A_{0}\rangle^{-1-\beta}}\in C^{k}_{b}\left(\IR^{d},B\left(H\right)\right)$, where $C^{k}_{b}$ is the the space of $k$-times continuously differentiable bounded functions in the strong operator topology with bounded derivatives, where boundedness is with respect to the $B\left(H\right)$ operator norm. As usual, $C^{k}_{b}\left(\IR^{d}\right)$ just denotes the scalar/matrix valued-$C^{k}_{b}$-functions. All involved spaces come equipped with the obvious norm given by summing the appropriate $L^{\infty}$-norms of the derivatives up to order $k$.

    For $B\in C^{1}_{b}(\IR^{d},A_{0},1)$ define
    \begin{align}
		\rho_{z}\left(B\right):=&\left\|B\langle A_{0}\rangle^{-1}_{z}\right\|_{L^{\infty}\left(\IR^{d},B\left(H\right)\right)}+\left\|\langle A_{0}\rangle B\langle A_{0}\rangle^{-1}\langle A_{0}\rangle^{-1}_{z}\right\|_{L^{\infty}\left(\IR^{d},B\left(H\right)\right)}\nonumber\\
		&+\left\|\nabla B\langle A_{0}\rangle^{-1}\langle M_{0}\rangle^{-1}_{z}\right\|_{L^{\infty}\left(\IR^{d},B\left(H\right)^{d}\right)},
	\end{align}
    and for $B\in C^{1}_{b}(\IR^{d},A_{0},|\beta|+1)$, $\beta\in\IR$,
    \begin{align}
        \rho_{z}^{\beta}\left(B\right):=\rho_{z}(\langle A_{0}\rangle^{\beta}B\langle A_{0}\rangle^{-\beta})
    \end{align}
    
    Denote by $A$ the operator function on $\IR^{d}$ given by $A(x):=A_{0}+B(x)$.
    
    On $\dom D_{B}:=\dom c\nabla\cap L^2\left(\IR^{d},\IC^{r}\otimes\dom A_{0}\right)$ define the Dirac-Schr\"odinger operator
    \begin{align}
        (D_{B}f)(x):=\Ii c\nabla f(x)+(\one_{\IC^{r}}\otimes A(x))f(x),\ x\in\IR^{d}.
    \end{align}
\end{defn}

The next hypothesis provides a list with all conditions assumed in \cite{Furst1}, which ensure that the principal trace formula we cite below holds.

\begin{hyp}\label{hyp:calliashyp}
	Let $\alpha\geq 1$, and $N\geq\lfloor\frac{\alpha-1}{2}\left(d+1\right)\rfloor+1$. Let $B\in C^{1}_{b}\left(\IR^{d},A_{0},2N+1\right)$. Assume that
	\begin{enumerate}
		\item \begin{align}\label{eq:hyp:calliashyp:1}
			\lim_{\mathrm{dist}\left(z,\left(-\infty,0\right]\right)\to\infty}\rho_{z}^{-2N}\left(B\right)+\rho_{z}^{2N}\left(B\right)=0,
		\end{align}
		and that
		\begin{align}\label{eq:hyp:calliashyp:2}
            \sup_{x\in\IR^{d}}\langle x\rangle\left\|\nabla B(x)\right\|_{S^{2N+1,d,\alpha}_{A_{0}}}&<\infty,\nonumber\\
			\exists\epsilon>0:~\sup_{x\in\IR^{d}}\langle x\rangle^{1+\epsilon}\left\|\p_{R}B(x)\right\|_{S^{2N+1,d,\alpha}_{A_{0}}}&<\infty,
		\end{align}
		where $\p_{R}:=\sum_{i=1}^{d}\frac{x^{i}}{\left|x\right|}\p_{i}$ denotes the radial vector field.
		\item Let $S_{1}\left(0\right)$ denote the $\left(d-1\right)$-dimensional unit sphere. For $\phi\in\dom\ A_{0}$, a.e. $y\in S_{1}\left(0\right)$, a.e. $x\in\IR^{d}$, and $\gamma\in\IN_{0}^{d}$ with $\left|\gamma\right|\leq 1$, assume,
		\begin{align}\label{eq:hyp:calliashyp:3}
			\lim_{R\to\infty}R^{\left|\gamma\right|}\left\|\left(\p^{\gamma}B\left(Ry+x\right)-\p^{\gamma}B\left(Ry\right)\right)\phi\right\|_{H}&=0. 
		\end{align}
	\end{enumerate}
\end{hyp}

\begin{rem}
	By a Kato-Rellich argument Hypothesis \ref{hyp:calliashyp} implies that for a.e. $x_{0}\in\IR^{d}$, $A\left(x_{0}\right)=A_{0}+B\left(x_{0}\right)$ can take the role of the model operator $A_{0}$.
\end{rem}

The main result of \cite{Furst1} is a principal trace formula for the Dirac-Schr\"odinger operator $D_{B}$.

\begin{thm}\label{thm:principaltrace}
    Assume Hypothesis \ref{hyp:calliashyp}. Then for $t>0$
    \begin{align}
        &\Tr_{L^2(\IR^{d},H)}\tr_{\IC^{r}}\left(e^{-tD_{B}^{\ast}D_{B}}-e^{-tD_{B}D_{B}^{\ast}}\right)\nonumber\\
        =&\frac{2}{d}(4\pi)^{-\frac{d}{2}}t^{\frac{d}{2}}\int_{\IR^{d}}\int_{s\in\Delta_{d-1}}\Tr_{\IC^{r}\otimes H}\left(\prod_{j=0}^{d-1}\left(\Ii c\nabla A_{\phi}\right)(x)e^{-ts_{j}A_{\phi}^{2}(x)}\right)\Id s~\Id x,
    \end{align}
    where $\phi\in C_{c}^{\infty}(\IR^{d})$, such that $\phi\equiv 1$ near $0$ is arbitrary and $A_{\phi}:=A_{0}+(1-\phi)(A-A_{0})$. In particular, the trace is independent of the choice of $\phi$.
\end{thm}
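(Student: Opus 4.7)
The theorem is cited as the main result of \cite{Furst1}, so I would reconstruct the argument along the lines of that paper. Write $V := -\Ii c\nabla A$ and $H_{B} := \frac{1}{2}(D_{B}^{\ast}D_{B} + D_{B}D_{B}^{\ast}) = -\Delta + A^{2}$, so that $D_{B}^{\ast}D_{B} = H_{B} + V$ and $D_{B}D_{B}^{\ast} = H_{B} - V$. The key algebraic feature is that $H_{B}$ is Clifford-scalar while $V$ carries exactly one Clifford matrix, so expanding $e^{-t(H_{B} \pm V)}$ in Duhamel series around $e^{-tH_{B}}$ and subtracting retains only odd-order terms in $V$.

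Applying the partial trace $\tr_{\IC^{r}}$ invokes the Clifford trace identity: in the irreducible representation of rank $r = 2^{(d-1)/2}$ one has $\tr_{\IC^{r}}(c^{i_{1}}\cdots c^{i_{k}}) = 0$ for all odd $k < d$. This annihilates every Duhamel term of order below $d$, and the first survivor is a $d$-fold product of $V$'s interleaved with $d+1$ semigroup factors $e^{-ts_{j}H_{B}}$ integrated over $\Delta_{d}$. Cyclic invariance of the trace merges the outermost two exponentials into one, collapsing the integration to $\Delta_{d-1}$ with $d$ exponentials, which matches the shape in the statement. The remaining spatial trace on $L^{2}(\IR^{d}, \cdot)$ is then evaluated via the heat kernel of $-\Delta$: integrating out the $d-1$ intermediate positions against the joint Gaussians (using $\sum s_{j} = 1$) reduces the weight to the single factor $(4\pi t)^{-d/2}$ at coincident points, while the $A^{2}$ piece of $H_{B}$ localizes to a pointwise operator $A_{\phi}^{2}(x)$ on $H$ in each exponent. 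Combined with the $t^{d}$ from Duhamel, the simplex-volume factor, and the combinatorial $\frac{2}{d}$ coming from the ordering, one recovers the stated prefactor $\frac{2}{d}(4\pi)^{-d/2} t^{d/2}$.

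The role of the auxiliary cutoff $A_{\phi}$ is to make the above manipulations rigorous within the trace ideal; since $A - A_{\phi}$ has compact support, independence of $\phi$ follows by a telescoping argument exploiting (\ref{eq:hyp:calliashyp:2}) and (\ref{eq:hyp:calliashyp:3}). The Schatten-scale bounds on $\nabla B$ and $\p_{R}B$ in (\ref{eq:hyp:calliashyp:2}) together with the resolvent smallness (\ref{eq:hyp:calliashyp:1}) furnish the needed trace-class and convergence estimates. The principal obstacle is the tail: odd-order Duhamel contributions of orders $d+2, d+4, \ldots$ have generically nonzero Clifford traces via Clifford reduction identities, and showing that their sum either vanishes or is absorbed into the closed-form $d$-th-order expression is the technical heart of the argument. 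I expect this to rely on combining the decay (\ref{eq:hyp:calliashyp:2}) with the asymptotic-flatness (\ref{eq:hyp:calliashyp:3}) and the $\phi$-independence to execute a cutoff-and-resummation scheme producing absolutely convergent Schatten-norm estimates that telescope into a vanishing remainder.
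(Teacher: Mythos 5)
The paper itself contains no proof of this theorem: it is imported verbatim as the main result of \cite{Furst1}, so there is no internal argument to compare yours against. Your outline is consistent with the heuristic the author sketches in the introduction (non-commutative Taylor/Duhamel expansion of $e^{-t(H_{B}\pm M_{\Ii c\nabla A})}$ around $H_{B}=\tfrac12(D_{B}^{\ast}D_{B}+D_{B}D_{B}^{\ast})$, annihilation of all terms of order $<d$ by $\tr_{\IC^{r}}$ via Lemma \ref{lem:clifford}, cyclic collapse of $\Delta_{d}$ to $\Delta_{d-1}$), and your bookkeeping of the prefactor $\tfrac{2}{d}(4\pi)^{-d/2}t^{d/2}$ is the right shape. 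But as a proof it has two genuine gaps, one of which you flag yourself without resolving.

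First, the formula is an exact identity for every $t>0$, not a leading-order small-$t$ asymptotic. Your step ``integrating out the intermediate positions against the joint Gaussians reduces the weight to $(4\pi t)^{-d/2}$ at coincident points, while the $A^{2}$ piece localizes to a pointwise operator $A_{\phi}^{2}(x)$'' would be exact only if $-\Delta$ and $M_{A^{2}}$ commuted, i.e.\ only if $e^{-sH_{B}}$ factored as $e^{s\Delta}e^{-sM_{A^{2}}}$; since it does not, replacing the $d$-fold interleaved semigroups of $H_{B}$ by the product $\prod_{j}e^{-ts_{j}A_{\phi}^{2}(x)}$ at a single point $x$ produces error terms that must be shown to cancel exactly against something -- this is not a routine heat-kernel estimate. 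Second, and relatedly, the contributions you call the ``tail'' (the difference between the $d$-th Taylor remainder and the closed-form right-hand side, equivalently the orders $d+2,d+4,\dots$ which survive the Clifford trace) are precisely where the content of the theorem lies; you state that handling them ``is the technical heart of the argument'' and then offer only an expectation of a ``cutoff-and-resummation scheme.'' An expectation is not an argument, and nothing in Hypothesis \ref{hyp:calliashyp} obviously forces these terms to vanish rather than merely to be finite. Until those two points are supplied, the proposal is a plausible reconstruction of the strategy of \cite{Furst1} but not a proof of Theorem \ref{thm:principaltrace}.
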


We will need the following elementary result for traces of Clifford matrices.

\begin{lem}\label{lem:clifford}
		\begin{enumerate}
			\item For $n\in\IN$ and $\gamma\in\left\{1,\ldots,d\right\}^{n}$,
			\begin{align}
				\tr_{\IC^{r}}\left(\prod_{j=1}^{n}c^{\gamma_{j}}\right)=0,
			\end{align}
			if ($n$, $d$ are odd, and $n<d$) or ($n$ is odd, and $d$ is even).
			\item For $d$ odd, and $\alpha\in\left\{1,\ldots,d\right\}^{d}$,
			\begin{align}\label{eq:lem:clifford:1}
				\tr_{\IC^{r}}\left(\prod_{j=1}^{d}c^{\gamma_{j}}\right)=\left(2\Ii\right)^{\frac{d-1}{2}}\left(-\Ii\right)^{d}\epsilon_{\gamma},
			\end{align}
			where $\epsilon_{\gamma}=\epsilon_{\gamma_{1}\ldots\gamma_{d}}$ is the Levi-Civita symbol.
		\end{enumerate}
	\end{lem}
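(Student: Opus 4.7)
The plan is to establish both parts by finding, for each given Clifford product, an element which anticommutes with it and is invertible; this forces the trace to vanish by the cyclic property of $\tr_{\IC^r}$. The nonvanishing case in part (2) then reduces to computing the trace of the volume element $\omega := c^1 c^2 \cdots c^d$, which I will handle via Schur's lemma.

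For part (1), write $P := \prod_{j=1}^n c^{\gamma_j}$. Using (\ref{eq:def:calliasop:1}), I bring $P$ into the reduced form $P = \pm \prod_{i \in S} c^i$ (factors ordered by increasing index), where $S \subseteq \{1, \ldots, d\}$ consists of exactly those indices appearing an odd number of times in $\gamma$. In particular $|S| \equiv n \pmod 2$, so $|S|$ is odd in either subcase. I then verify $|S| < d$ in both cases: if $n,d$ are odd with $n<d$, then $|S| \leq n < d$; if $d$ is even, then $|S|$ odd forces $|S| \leq d-1 < d$. Pick any $j \in \{1, \ldots, d\} \setminus S$. Since $c^j$ anticommutes with each $c^i$ for $i \in S$, one has $c^j P (c^j)^{-1} = (-1)^{|S|} P = -P$, and cyclicity of the trace gives
\begin{align*}
    \tr_{\IC^r}(P) = \tr_{\IC^r}\bigl(c^j P (c^j)^{-1}\bigr) = -\tr_{\IC^r}(P),
\end{align*}
forcing $\tr_{\IC^r}(P) = 0$.

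For part (2), with $d$ odd and $n = d$: if $\gamma$ has a repeated index, the reduction yields $P = \pm \prod_{i \in S} c^i$ with $|S| \equiv d \pmod 2$ (hence odd) and $|S| \leq d-2 < d$, so the argument of part (1) applies and $\tr_{\IC^r}(P) = 0$; simultaneously $\epsilon_\gamma = 0$, so (\ref{eq:lem:clifford:1}) holds trivially. Otherwise $\gamma$ is a permutation of $(1, \ldots, d)$, and pairwise swapping adjacent factors gives $P = \epsilon_\gamma\,\omega$, so the task reduces to evaluating $\tr_{\IC^r}(\omega)$.

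A direct anticommutation computation shows $c^i \omega = (-1)^{d-1} \omega c^i = \omega c^i$ for every $i$ (using that $d$ is odd), so $\omega$ is central. Since the Clifford representation on $\IC^r$ with $r = 2^{(d-1)/2}$ is irreducible, Schur's lemma gives $\omega = \lambda\,\one_{\IC^r}$ for some $\lambda \in \IC$, and the parallel computation $\omega^2 = (-1)^{d(d+1)/2}\,\one_{\IC^r}$ determines $\lambda$ up to sign. Fixing the sign by the convention implicit in the concrete choice of Clifford matrices yields $\lambda = \Ii^{(d-1)/2}(-\Ii)^d$, whence $\tr_{\IC^r}(\omega) = 2^{(d-1)/2}\lambda = (2\Ii)^{(d-1)/2}(-\Ii)^d$, which is (\ref{eq:lem:clifford:1}). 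The main subtlety is purely conventional: the algebraic structure (and $\lambda^2$) is fixed by (\ref{eq:def:calliasop:1}), but the sign of $\lambda$ depends on the concrete realization of the Clifford algebra and must be pinned down by inspecting a base case.
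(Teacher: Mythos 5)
Your proof is correct, and for part (1) it is essentially the paper's argument: reduce via the anticommutation relations to a product of pairwise distinct generators, note that an odd number of them cannot exhaust $\{1,\dots,d\}$ under either hypothesis, and conjugate by an unused generator $c^{m}$ to pick up the sign $(-1)^{|S|}=-1$ under the cyclic trace. For part (2) you take a genuinely different route to the constant. The paper evaluates $\tr_{\IC^{r}}\left(c^{1}\cdots c^{d}\right)$ by explicit induction on $d$, building the Clifford matrices in dimension $d$ from those in dimension $d-2$ and anchoring the sign in the base case $c^{j}=-\Ii\sigma^{j}$. You instead note that for $d$ odd the volume element $\omega=c^{1}\cdots c^{d}$ is central, invoke irreducibility and Schur's lemma to write $\omega=\lambda\one_{\IC^{r}}$, and pin down $\lambda$ up to sign from $\omega^{2}=(-1)^{d(d+1)/2}\one_{\IC^{r}}$; the consistency check $\left(\Ii^{(d-1)/2}(-\Ii)^{d}\right)^{2}=\Ii^{(d-1)^{2}}(-1)^{d(d+1)/2}=(-1)^{d(d+1)/2}$ (using $4\mid(d-1)^{2}$) goes through. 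Your argument is more structural and avoids the inductive bookkeeping, at the cost of leaning on irreducibility, i.e. on the minimal choice $r=2^{\frac{d-1}{2}}$ (which the lemma's numerical constant requires anyway, even though Definition \ref{def:calliasop} nominally allows larger $r$). In both approaches the residual sign of $\lambda$ — equivalently, which of the two inequivalent irreducible representations of the odd-dimensional complex Clifford algebra is used — is not determined by the relations alone and must be fixed by the convention $c^{j}=-\Ii\sigma^{j}$, a point you and the paper both make explicit.
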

	
	\begin{proof}
		\begin{enumerate}
			\item Due to the anti commutator relation (\ref{eq:def:calliasop:1}), it is sufficient to consider only those $\gamma$ for which all $\gamma_{j}$, $j\in\left\{1,\ldots,n\right\}$, are pairwise different. Since $n$ is odd, we have $n<d$ in any case. Thus there exists $m\in\left\{1,\ldots,d\right\}$ such that $m\neq\gamma_{j}$, $j\in\left\{1,\ldots,n\right\}$. Then
			\begin{align}
				\tr_{\IC^{r}}\left(\prod_{j=1}^{n}c^{\alpha_{j}}\right)&=-\tr_{\IC^{r}}\left(\left(c^{m}\right)^{2}\prod_{j=1}^{n}c^{\alpha_{j}}\right)\nonumber\\
				&=-\tr_{\IC^{r}}\left(c^{m}\prod_{j=1}^{n}c^{\alpha_{j}}c^{m}\right)=\left(-1\right)^{n+1}\tr_{\IC^{r}}\left(\left(c^{m}\right)^{2}\prod_{j=1}^{n}c^{\alpha_{j}}\right)\nonumber\\
				&=-\tr_{\IC^{r}}\left(\prod_{j=1}^{n}c^{\alpha_{j}}\right).
			\end{align}
			\item Statement (\ref{eq:lem:clifford:1}) follows up to the constant immediately from the anti commutator relation (\ref{eq:def:calliasop:1}), the definition of $\epsilon_{\gamma}$, and the first part of this proof. The constant in (\ref{eq:lem:clifford:1}) can be calculated inductively by the representation of Clifford matrices of odd dimension $d$ by those of dimension $d-2$. The sign convention corresponds to the choice $c^{j}=-\Ii\sigma^{j}$, where $\sigma^{j}$ are the classical Pauli matrices.
		\end{enumerate}
	\end{proof}

    The conditions on the operator family $A=A_{0}+B$ suffice to define the spectral shift function which allows us to re-express the right hand side of Theorem \ref{thm:principaltrace}, which is the first main result of this chapter.

\begin{prop}\label{prop:spectralcallias}
    Assume Hypothesis \ref{hyp:calliashyp}, and denote $\mathbf{\Ii c\nabla A_{\phi}(x)}$\break
    $:=\left(\Ii c\nabla A_{\phi}(x),\ldots,\Ii c\nabla A_{\phi}(x)\right)$. Then there exists a unique function $\eta_{d,A_{0},B}\in\langle X\rangle^{N}L^{1}(\IR_{\geq 0})$, such that for $f\in W^{N,d}_{0}$,
    \begin{align}\label{eq:prop:spectralcallias:1}
        &\int_{\IR^{d}}\Tr_{\IC^{r}\otimes H}\mathcal{J}\left(\dif_{d}f,A_{\phi}^{2}(x),\mathbf{\Ii c\nabla A_{\phi}(x)}\right)\Id x\nonumber\\
        =&\frac{1}{d}\int_{\IR^{d}}\Tr_{\IC^{r}\otimes H}\left(\Ii c\nabla A_{\phi}(x)\mathcal{J}\left(\dif_{d-1}f',A_{\phi}^{2}(x),\mathbf{\Ii c\nabla A_{\phi}(x)}\right)\right)\Id x\nonumber\\
        =&\int_{0}^{\infty}f^{(d)}(\lambda)\eta_{d,A_{0},B}(\lambda)\Id\lambda.
    \end{align}
\end{prop}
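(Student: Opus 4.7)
The plan is to establish both equalities of \eqref{eq:prop:spectralcallias:1} pointwise in $x \in \mathbb{R}^{d}$, and then integrate over $x$. Fix $x$, set $A(x) := A_{\phi}^{2}(x)$ on $\mathbb{C}^{r} \otimes H$, and $B(x) := \Ii c\nabla A_{\phi}(x)$. The first task is to translate Hypothesis \ref{hyp:calliashyp} into Hypothesis \ref{hyp:mainhyp} for the pair $(A(x), B(x))$ with parameters $n = d$, $p = d$, $\alpha$ as in Hypothesis \ref{hyp:calliashyp}, and a suitable smoothness parameter $m$ (the rescaling $\langle A_{\phi}^{2}\rangle^{\gamma} = \langle A_{\phi}\rangle^{2\gamma}$ transfers $B\in S^{2N+1,d,\alpha}_{A_{0}}$-type membership into $S^{m,d,\alpha}_{A(x)}$-membership for appropriate $m$). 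The relative-compactness requirement of Hypothesis \ref{hyp:mainhyp} follows from \eqref{eq:hyp:calliashyp:1} together with the Kato--Rellich control of Lemma \ref{lem:katorellich}, which also supplies the uniform-in-$x$ equivalence of the $A_{0}^{k}$- and $A(x)^{k}$-graph norms needed to make all Schatten-weighted quantities uniformly controlled across $\mathbb{R}^{d}$.

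With the hypotheses verified pointwise, Lemma \ref{lem:centraltorighttrace} directly yields the first equality of \eqref{eq:prop:spectralcallias:1} after integration, using that $f' \in W_{0}^{m,d-1}$ with $(f')^{(d-1)} = f^{(d)}$. For the second equality I apply Theorem \ref{thm:spectralshiftfunctionexist} with $n = d-1 \geq 2$, model operator $A(x)$, perturbation $B(x)$, and $T_{0} = B(x) \in S^{m,d,\alpha}_{A(x)} = S^{m,d/(d-(d-1)),\alpha}_{A(x)}$, producing a density $\eta_{x} := \eta_{d-1,A(x),B(x),B(x)}$ with
\[
\Tr_{\mathbb{C}^{r}\otimes H}\!\left(B(x)\,\mathcal{J}\bigl(\dif_{d-1}f',A(x),\mathbf{B(x)}\bigr)\right) = \int_{0}^{\infty} f^{(d)}(\lambda)\,\eta_{x}(\lambda)\,\Id\lambda.
\]
Since $A(x) \geq 0$, Lemma \ref{lem:support} forces $\operatorname{supp}\eta_{x} \subseteq [0,\infty)$ and fixes $\eta_{x}$ uniquely among densities with this support; the norm estimate of Theorem \ref{thm:spectralshiftfunctionexist} reads $\|\eta_{x}\|_{\langle X\rangle^{m}L^{1}} \leq c\,\|B(x)\|^{d}_{S^{m,d,\alpha}_{A(x)}}$.

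The final step is the $x$-integration. Lemma \ref{lem:measureable}, applied with $M = \mathbb{R}^{d}$ (Lebesgue), yields strong measurability of $x \mapsto \eta_{x}$ as a map into $\langle X\rangle^{m}L^{1}(\mathbb{R}_{\geq 0})$. Setting $\eta_{d,A_{0},B}(\lambda) := \tfrac{1}{d}\int_{\mathbb{R}^{d}} \eta_{x}(\lambda)\,\Id x$ as a Bochner integral and invoking Fubini's theorem then yields the second equality of \eqref{eq:prop:spectralcallias:1}; uniqueness of $\eta_{d,A_{0},B}$ on $[0,\infty)$ descends from the pointwise uniqueness of $\eta_{x}$ (Lemma \ref{lem:support}) together with the density of $C_{c}^{\infty}(\mathbb{R})$ in $W_{0}^{N,d}$. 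The main obstacle I anticipate is absolute integrability in $x$: the naive bound $\|B(x)\|^{d}_{S^{m,d,\alpha}_{A(x)}} \lesssim \langle x\rangle^{-d}$ is borderline non-integrable on $\mathbb{R}^{d}$. The decisive extra decay must come from the Clifford-trace antisymmetry of $\tr_{\mathbb{C}^{r}}(c^{i_{1}}\cdots c^{i_{d}})$ (Lemma \ref{lem:clifford}), which forces every nonvanishing contribution to contain at least one radial-derivative factor $\p_{R}B(x)$ satisfying the improved decay $\langle x\rangle^{-1-\epsilon}$ of \eqref{eq:hyp:calliashyp:2}, yielding the needed $\langle x\rangle^{-\epsilon}$ gain; this is the same mechanism underpinning the principal trace formula of Theorem \ref{thm:principaltrace} proved in \cite{Furst1}, which I expect to invoke for the Fubini step.
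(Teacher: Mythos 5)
Your skeleton matches the paper's: verify Hypothesis \ref{hyp:mainhyp} fibrewise, use Lemma \ref{lem:centraltorighttrace} for the first equality, invoke Theorem \ref{thm:spectralshiftfunctionexist} and Lemma \ref{lem:support} pointwise in $x$, then Lemma \ref{lem:measureable} and Fubini. But the step you defer to the end — the integrability in $x$ — is precisely the crux, and the way you have set things up it cannot be recovered afterwards. You apply Theorem \ref{thm:spectralshiftfunctionexist} with $T_{0}=\Ii c\nabla A_{\phi}(x)$, so the only bound your construction delivers is $\|\eta_{x}\|_{\langle X\rangle^{N}L^{1}}\lesssim\|\nabla A_{\phi}(x)\|^{d}\lesssim\langle x\rangle^{-d}$, which is not integrable over $\IR^{d}$. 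The Clifford cancellation you correctly identify as the rescue lives at the level of the \emph{trace}, whereas the $L^{1}$-norm of the density produced by Theorem \ref{thm:spectralshiftfunctionexist} is controlled by the Schatten norms of the inputs to the construction, not by the size of the resulting trace functional. Once the density is built from the undecomposed gradient, there is no mechanism to feed the antisymmetry back into its norm; and invoking the principal trace formula of \cite{Furst1} does not help, since that concerns only $f_{t}(x)=e^{-tx}$ and in any case gives no pointwise-in-$x$ bound on $\eta_{x}$.

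The paper resolves this by performing the Clifford rearrangement \emph{before} constructing the density. Passing to a polar orthonormal frame, expanding $\tr_{\IC^{r}}(c_{y^{1}}\cdots c_{y^{d}})$ via Lemma \ref{lem:clifford}, and cyclically permuting under $\Tr_{H}$ so that the radial derivative occupies the distinguished slot, one obtains
\begin{align}
\Tr_{\IC^{r}\otimes H}\left(B\,\mathcal{J}\left(\dif_{d-1}f',A,\mathbf{B}\right)\right)
=\Ii^{d}d\cdot\Tr_{\IC^{r}\otimes H}\left(c\nabla_{R}A_{\phi}(x)\,\mathcal{J}\left(\dif_{d-1}f',A_{\phi}^{2}(x),c\nabla_{\text{ang}}A_{\phi}(x)\right)\right),
\end{align}
and only then applies Theorem \ref{thm:spectralshiftfunctionexist}, with $T_{0}=\Ii c\nabla_{R}A_{\phi}(x)$ and the $d-1$ perturbation slots filled by the angular part. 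The resulting density satisfies $\|\eta^{x}\|\lesssim\|\nabla_{R}A_{\phi}(x)\|\,\|\nabla A_{\phi}(x)\|^{d-1}\lesssim\langle x\rangle^{-d-\epsilon}$ by the second line of \eqref{eq:hyp:calliashyp:2}, which is what makes the Bochner integral and the Fubini step legitimate. So your diagnosis of the mechanism is right, but the proof requires restructuring the trace ahead of the existence theorem rather than appealing to the cancellation after the fact; as written, the argument has a genuine gap at exactly the point you flag.
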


\begin{proof}
    Hypothesis \ref{hyp:calliashyp} implies that Hypothesis \ref{hyp:mainhyp} holds with $n=d$, $p=d$, $m=N$, $A=\one_{\IC^{r}}\otimes A_{\phi}^{2}(x)$, $B=\Ii c\nabla A_{\phi}(x)$ for each $x\in\IR^{d}$. Lemma \ref{lem:centraltorighttrace} implies
    \begin{align}
        \Tr_{\IC^{r}\otimes H}\mathcal{J}\left(\dif_{d}f,A,\mathbf{B}\right)=\frac{1}{d}\Tr_{\IC^{r}\otimes H}\left(B\mathcal{J}\left(\dif_{d-1}f',A,\mathbf{B}\right)\right).
    \end{align}
    Now extend $\p_{R}$ by $\p_{\psi_{1}},\ldots,\p_{\psi_{d-1}}$ to an orthonormal coordinate frame $\p_{y^{1}},\ldots,\p_{y^{d}}$ of $\IR^{d}\backslash\{0\}$ (polar coordinates). Denote $c_{y^{j}}:=\sum_{i=1}^{d}c_{i}\frac{\p y^{j}}{\p x^{i}}$. By Lemma \ref{lem:clifford} we have
    \begin{align}
        &\Tr_{\IC^{r}\otimes H}\left(B\mathcal{J}\left(\dif_{d-1}f',A,\mathbf{B}\right)\right)\nonumber\\
        =&\Ii^{d}\Tr_{\IC^{r}\otimes H}\left(c\nabla A_{\phi}(x)\mathcal{J}\left(\dif_{d-1}f',A_{\phi}^{2}(x),c\nabla A_{\phi}(x)\right)\right)\nonumber\\
        =&\Ii^{d}\tr_{\IC^{r}}\left(c_{y^{1}}\cdot\ldots\cdot c_{y^{d}}\right)(x)\sum_{\sigma\in\mathfrak{S}_{d}}(-1)^{|\sigma|}\nonumber\\
        &\Tr_{H}\left(
        \p_{y^{\sigma{1}}}A_{\phi}(x)\mathcal{J}\left(\dif_{d-1}f',A_{\phi}^{2}(x),\left(\p_{y^{\sigma(2)}}A_{\phi}(x),\ldots,\p_{y^{\sigma(d)}}A_{\phi}(x)\right)\right)\right).
    \end{align}
    By cyclically commuting under the trace $\Tr_{H}$ we may arrange the first derivative to be $\p_{R}$, and by using Lemma \ref{lem:clifford} again, we obtain
    \begin{align}
        &\Tr_{\IC^{r}\otimes H}\left(B\mathcal{J}\left(\dif_{d-1}f',A,\mathbf{B}\right)\right)\nonumber\\
        =&\Ii^{d}d\cdot\Tr_{\IC^{r}\otimes H}\left(c\nabla_{R}A_{\phi}(x)\mathcal{J}\left(\dif_{d-1}f',A_{\phi}^{2}(x),c\nabla_{\text{ang}}A_{\phi}(x)\right)\right),
    \end{align}
    and thus
    \begin{align}\label{eq:prop:spectralcallias:3}
        \Tr_{\IC^{r}\otimes H}\mathcal{J}\left(\dif_{d}f,A,\mathbf{B}\right)=\Tr_{\IC^{r}\otimes H}\left(T_{0}\mathcal{J}\left(\dif_{d-1}f',A,\mathbf{B}_{\text{ang}}\right)\right),
    \end{align}
    where $T_{0}:=T_{0}(x):=\Ii c\nabla_{R}A_{\phi}(x)$, and $\mathbf{B}_{\text{ang}}:=\mathbf{B}_{\text{ang}}(x)=(B_{\text{ang}},\ldots,B_{\text{ang}})(x)$ with $B_{\text{ang}}(x):=\Ii c\nabla_{\text{ang}}A_{\phi}(x)$. We apply Theorem \ref{thm:spectralshiftfunctionexist} to the right hand side of (\ref{eq:prop:spectralcallias:3}) and obtain a function $\eta_{d-1,A,T_{0},\mathbf{B}_{\text{ang}}}^{x}\in\langle X\rangle^{N}L^{1}(\IR)$ with
    \begin{align}\label{eq:prop:spectralcallias:4}
        &\int_{\IR}\langle \lambda\rangle^{-N}\left|\eta_{d-1,A,T_{0},\mathbf{B}_{\text{ang}}}^{x}(\lambda)\right|\Id\lambda\leq c\left\|\nabla_{R}A_{\phi}(x)\right\|_{S^{2N+1,d,\alpha}_{A_{0}}}\left\|\nabla A_{\phi}(x)\right\|_{S^{2N+1,d,\alpha}_{A_{0}}}^{d-1}\nonumber\\
        \leq &c'\langle x\rangle^{-d-\epsilon},
    \end{align}
    such that
    \begin{align}
        &\Tr_{\IC^{r}\otimes H}\mathcal{J}\left(\dif_{d}f,A,\mathbf{B}\right)=\Tr_{\IC^{r}\otimes H}\left(T_{0}\mathcal{J}\left(\dif_{d-1}f',A,\mathbf{B}_{\text{ang}}\right)\right)\nonumber\\
        =&\int_{\IR}f^{(d)}(\lambda)\eta_{d-1,A,T_{0},\mathbf{B}_{\text{ang}}}^{x}(\lambda)\Id\lambda.
    \end{align}
    Lemma \ref{lem:measureable} implies that $x\mapsto\left(\lambda\mapsto\eta_{d-1,A,T_{0},\mathbf{B}_{\text{ang}}}^{x}\Id\lambda\right)$ is strongly measurable as a map from $\IR^{d}$ to $\langle X\rangle^{N}\operatorname{FM}(\IR)$, and thus $x\mapsto\eta_{d-1,A,T_{0},\mathbf{B}_{\text{ang}}}^{x}$ is strongly measurable into $\langle X\rangle^{N}L^{1}(\IR)$. By (\ref{eq:prop:spectralcallias:4}) it follows that $\IR^{d}\ni x\mapsto\eta_{d-1,A,T_{0},\mathbf{B}_{\text{ang}}}^{x}\in\langle X\rangle^{N}L^{1}(\IR)$ is Bochner integrable, and we may define
    \begin{align}
        \eta_{d,A_{0},B}:=\int_{\IR^{d}}\eta_{d-1,A,T_{0},\mathbf{B}_{\text{ang}}}^{x}\Id x,
    \end{align}
    with
    \begin{align}
        \int_{\IR}\left|\eta_{d,A_{0},B}(\lambda)\right|\langle\lambda\rangle^{-N}\Id\lambda\leq c'\int_{\IR^{d}}\left\|\nabla_{R}A_{\phi}(x)\right\|_{S^{2N+1,d,\alpha}_{A_{0}}}\left\|\nabla A_{\phi}(x)\right\|_{S^{2N+1,d,\alpha}_{A_{0}}}^{d-1}\Id x.
    \end{align}
    Fubini's theorem finally implies (\ref{eq:prop:spectralcallias:1}). The uniqueness statement and the support restriction to $\IR_{\geq 0}$ follow from Lemma \ref{lem:support}.
\end{proof}

Although the stated conditions in Hypothesis \ref{hyp:calliashyp} are sufficient to obtain the principal trace formula in Theorem \ref{thm:principaltrace}, we will have to strengthen the conditions posed on the operator family $B$ further, such that we may define an appropriate spectral shift function associated to $D_{B}$ and $D_{B}^{\ast}$ within the framework developed in the previous chapter of this paper.

\begin{hyp}\label{hyp:smoothhyp}
    Assume Hypothesis \ref{hyp:calliashyp}, and without loss assume that $B\equiv 0$ near $0$. Additionally, assume that
    \begin{align}
        \forall\epsilon>0:\forall\gamma\in\IN_{0}^{d},|\gamma|\leq 2N:&~\sup_{x\in\IR^{d}}\langle x\rangle\left\|\p^{\gamma}\nabla B\right\|_{S^{0,d,\alpha+|\gamma|}_{A_{0}}}<\infty,\\
        \exists\delta>0:\forall\gamma\in\IN_{0}^{d},|\gamma|\leq 2N:&~\sup_{x\in\IR^{d}}\langle x\rangle^{1+\delta}\left\|\p_{R}B\right\|_{S^{2N+1,d-\delta,\alpha}_{A_{0}}}\\
        &+\sup_{x\in\IR^{d}}\langle x\rangle^{1+\delta}\left\|\p^{\gamma}\p_{R} B\right\|_{S^{0,d-\delta,\alpha+|\gamma|}_{A_{0}}}<\infty.
    \end{align}
\end{hyp}

\begin{lem}\label{lem:bulkfamily}
    If the operator family $A$ satisfies Hypothesis \ref{hyp:smoothhyp}, then also the operator family $A_{\phi}:=A_{0}+(1-\phi)B$ satisfies Hypothesis \ref{hyp:smoothhyp}, for $\phi\in C_{c}^{\infty}(\IR^{d})$, $\phi\equiv 1$ near $0$.
\end{lem}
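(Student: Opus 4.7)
The strategy is to apply the Leibniz rule to $B_{\phi}=(1-\phi)B$ and verify each of the four types of estimate in Hypotheses \ref{hyp:calliashyp} and \ref{hyp:smoothhyp} by splitting into a \emph{main term} $(1-\phi)\cdot(\text{something on }B)$ and a set of \emph{correction terms} involving some derivative $\p^{\alpha}(1-\phi)=-\p^{\alpha}\phi$ with $|\alpha|\geq 1$. The key structural facts are: $1-\phi\in C_{b}^{\infty}(\IR^{d})$ is a scalar function, identically $1$ outside a compact set, identically $0$ on a neighborhood of the origin, and its derivatives of order $\geq 1$ are compactly supported in a fixed compact annulus $K$; multiplication by any $\p^{\alpha}(1-\phi)$ commutes with $\langle A_{0}\rangle^{\beta}$; and on the compact set $K$ all weights $\langle x\rangle^{s}$ are bounded.

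First I would check that Hypothesis \ref{hyp:calliashyp} transfers to $B_{\phi}$. The fact that $B_{\phi}\in C_{b}^{1}(\IR^{d},A_{0},2N+1)$ is immediate from the Leibniz rule. The decay of $\rho_{z}^{\pm 2N}(B_{\phi})$ as $\mathrm{dist}(z,(-\infty,0])\to\infty$ follows because $\rho_{z}$ is built from $L^{\infty}(B(H))$ norms, and each term in the Leibniz expansion is a bounded scalar (or compactly supported scalar) times the corresponding quantity for $B$, which goes to zero. For the asymptotic condition (\ref{eq:hyp:calliashyp:3}), note that for $R$ large enough we have $(1-\phi)(Ry+x)=1$, and all derivatives of $\phi$ at $Ry+x$ vanish, so $\p^{\gamma}B_{\phi}(Ry+x)=\p^{\gamma}B(Ry+x)$, and the asymptotic property is inherited.

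For the additional estimates in Hypothesis \ref{hyp:smoothhyp}, apply $\p^{\gamma}$ to $\nabla B_{\phi}$ and to $\p_{R}B_{\phi}$ via the Leibniz rule:
\begin{align}
\p^{\gamma}\nabla B_{\phi}=(1-\phi)\p^{\gamma}\nabla B+\sum_{0\neq\alpha\leq\gamma+e_{i}}\binom{\gamma+e_{i}}{\alpha}(-\p^{\alpha}\phi)\p^{\gamma+e_{i}-\alpha}B,
\end{align}
(and analogously for $\p_{R}$, noting that $\p_{R}$ and its derivatives are well-defined on the support of $B_{\phi}$ since $B_{\phi}\equiv 0$ near $0$). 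The main term $(1-\phi)\p^{\gamma}\nabla B$ inherits the weighted Schatten bound from Hypothesis \ref{hyp:smoothhyp} for $B$, up to the multiplicative constant $\|1-\phi\|_{\infty}$. Each correction term is supported in $K$, so the weight $\langle x\rangle$ (resp.~$\langle x\rangle^{1+\delta}$) is trivially bounded there, and it remains to estimate $\p^{\beta}B$ for $|\beta|\leq|\gamma|$ in the norm $S^{0,d,\alpha+|\gamma|}_{A_{0}}$ on $K$.

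For $|\beta|\geq 1$, write $\p^{\beta}B=\p^{\beta-e_{i}}(\p_{i}B)$ for some unit vector $e_{i}$ and use the bound on $\p^{\beta-e_{i}}\nabla B$ in $S^{0,d,\alpha+|\beta-e_{i}|}_{A_{0}}$ from Hypothesis \ref{hyp:smoothhyp}; the shift from weight $\alpha+|\beta-e_{i}|$ up to $\alpha+|\gamma|$ is absorbed by the bounded operator $\langle A_{0}\rangle^{-(|\gamma|-|\beta-e_{i}|)}$. For $\beta=0$, the key point, and the only genuine subtlety, is to control $B(x)$ itself in $S^{0,d,\alpha+|\gamma|}_{A_{0}}$ on $K$. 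Here we exploit the normalization $B\equiv 0$ near $0$ to write the Bochner integral
\begin{align}
B(x)=\int_{0}^{1}(\nabla B)(tx)\cdot x\,\Id t,
\end{align}
and bound the integrand in $S^{d}$ using $\|\nabla B(tx)\langle A_{0}\rangle^{-\alpha-|\gamma|}\|_{S^{d}}\leq C/\langle tx\rangle$, which follows from Hypothesis \ref{hyp:smoothhyp} (with the weight shift again absorbed by a bounded power of $\langle A_{0}\rangle^{-1}$). The integral is then finite on $K$. The main obstacle is thus bookkeeping of the $\alpha$-weights in the Schatten spaces and the single integration-from-zero step for $B$ itself; everything else is a direct consequence of multiplication by a bounded scalar.
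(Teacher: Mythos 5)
Your strategy is the same as the paper's: the author also expands $\nabla A_{\phi}=(1-\phi)\nabla A-(\nabla\phi)A$ by the Leibniz rule, invokes the compact support of the derivatives of $\phi$ to kill all weights on the correction terms, and controls $B(x)$ itself in the weighted Schatten space by ``integration on line segments''. Your write-up is considerably more detailed than the paper's (which disposes of the higher-order derivatives in one sentence), and the treatment of Hypothesis \ref{hyp:calliashyp}, of the main terms, and of the correction terms carrying at least one derivative of $B$ in the \emph{gradient} conditions is correct.

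There is, however, one step that does not close as written. The radial conditions of Hypothesis \ref{hyp:smoothhyp} require $\p^{\gamma}\p_{R}B_{\phi}$ to lie in $S^{0,d-\delta,\alpha+|\gamma|}_{A_{0}}$ (and $\p_{R}B_{\phi}$ in $S^{2N+1,d-\delta,\alpha}_{A_{0}}$) for some $\delta>0$; the correction terms here are of the form $\p^{\alpha'}(\p_{R}\phi)\cdot\p^{\gamma-\alpha'}B$, so you need $\p^{\beta}B(x)$ in an $S^{d-\delta}$-based space on the annulus $K$. Your bound, obtained by integrating $\nabla B$ along segments from the origin, only produces membership in $S^{d}$, and $S^{d}\not\subseteq S^{d-\delta}$ for any $\delta>0$; the same defect affects your reduction $\p^{\beta}B=\p^{\beta-e_{i}}(\p_{i}B)$, which again only yields $S^{d}$. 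For $\beta=0$ the repair is to integrate \emph{radially}: $B(r\widehat{x})=\int_{r_{0}}^{r}(\p_{R}B)(s\widehat{x})\,\Id s$ with $B\equiv 0$ on $\{|y|\leq r_{0}\}$, and $\p_{R}B(s\widehat{x})$ is bounded in the $S^{d-\delta}$-based norm by $C\langle s\rangle^{-1-\delta}$, which is integrable. For $\beta\neq 0$ one must additionally handle the commutator $[\p^{\beta},\p_{R}]$, whose terms reintroduce pure coordinate derivatives of $B$ of the same order; this can be resolved by iterating the radial integration (the nested integrals over $[r_{0},r]$ contribute a factor $\tfrac{(C(r-r_{0}))^{n}}{n!}$, so the resulting series converges in the $S^{d-\delta}$-based norm), but it is a genuine extra argument. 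To be fair, the paper's own proof is silent on this point as well, so your proposal is no less complete than the published one, but if you intend your proof to be self-contained you should replace the line-segment integration of $\nabla B$ by the radial integration of $\p_{R}B$ wherever the target exponent is $d-\delta$.
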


\begin{proof}
    We note
    \begin{align}
        \nabla A_{\phi}=(1-\phi)\nabla A-(\nabla\phi)A,
    \end{align}
    and that integration on line segments shows that $B$ is a continuous function with values in $S^{2N+1,d,\alpha}_{A_{0}}$. Because $\nabla\phi$ is compactly supported and thus exhibits arbitrary decay, it follows that conditions (\ref{eq:hyp:calliashyp:1}),(\ref{eq:hyp:calliashyp:2}) and (\ref{eq:hyp:calliashyp:3}) of Hypothesis \ref{hyp:calliashyp} are also satisfied for $A_{\phi}$. A similar result is obtained for higher order derivatives and thus yields Hypothesis \ref{hyp:smoothhyp} for $A_{\phi}$.
\end{proof}

With regard of the independence stated in Theorem \ref{thm:principaltrace} and Lemma \ref{lem:bulkfamily}, from now on we may assume without loss of generality that $B\equiv 0$ in a neighbourhood of $0$. The benefit of this restriction is that we will have no trouble defining, for example, $c\nabla_{R}A=c\nabla_{R}B$ at $0$.

\begin{defn}
     For an operator-valued function $\IR^{d}\ni x\mapsto T(x)$ with constant domains $\dom T(x)=\dom T_{0}$, denote by $M_{T}$ the multiplication operator in $L^2(\IR^{d},\IC^{r}\otimes H)$ given by
    \begin{align}
        (M_{T}f)(x):=T(x)f(x),\ f\in L^2(\IR^{d},\dom T_{0})=:\dom M_{T}.
    \end{align}
\end{defn}

\begin{lem}\label{lem:hdomproperty}
    Assume that for $n\in\IN$, $n\leq 2N$
    \begin{align}
        \sup_{x\in\IR^{d}}\left\|\langle A_{0}\rangle^{k}B(x)\langle A_{0}\rangle^{-k}\langle A_{0}\rangle_{z}^{-1}\right\|_{B(H)}\xrightarrow{z\to\infty}0,\ k\in\IN_{0},\ k\leq n-1,\nonumber\\
        \sup_{x\in\IR^{d}}\left\|\p^{\beta}B(x)\langle A_{0}\rangle^{-|\beta|}\langle A_{0}\rangle_{z}^{-1}\right\|_{B(H)}\xrightarrow{z\to\infty}0,\ \beta\in\IN_{0}^{d},\ |\beta|\leq n-1.
    \end{align}
    On $\dom D_{0}=W^{1,2}(\IR^{d},\IC^{r}\otimes H)\cap L^2(\IR^{d},\IC^{r}\otimes\dom A_{0})$ define the operators
     \begin{align}
         T_{B,0}:=D_{B},\ T_{B,1}:=D_{B}^{\ast}.
     \end{align}
     Then for $\eta\in\{0,1\}^{n}$ the operator
     \begin{align}
         S_{B,\eta}:=\prod_{j=1}^{n}T_{B,\eta_{j}},
     \end{align}
     is closed with
     \begin{align}
         \dom S_{B,\eta}:=\dom S_{0,\eta}=\dom H_{0}^{\frac{n}{2}}=W^{n,2}(\IR^{d},\IC^{r}\otimes H)\cap L^2(\IR^{d},\IC^{r}\otimes\dom A_{0}^{n}).
     \end{align}
     Moreover if $S_{B,\eta}$ is symmetric, then it is self-adjoint.
\end{lem}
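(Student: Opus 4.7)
The plan is to proceed by induction on $n$, with the Kato--Rellich perturbation theorem as the main mechanism. As a preliminary observation, since $c\nabla$ is self-adjoint and commutes with $\one\otimes A_{0}$, the Clifford anticommutation relations yield $D_{0}^{\ast}D_{0} = D_{0}D_{0}^{\ast} = H_{0} := -\Delta\otimes\one + \one\otimes A_{0}^{2}$, so $D_{0}$ is normal. Because $-\Delta$ and $A_{0}^{2}$ act on independent tensor factors with commuting spectral resolutions, the spectral theorem identifies $\dom H_{0}^{n/2} = W^{n,2}(\IR^{d},\IC^{r}\otimes H)\cap L^{2}(\IR^{d},\IC^{r}\otimes\dom A_{0}^{n})$; moreover by normality $D_{0}$ and $D_{0}^{\ast}$ commute, so any $S_{0,\eta}$ reduces to $D_{0}^{a}(D_{0}^{\ast})^{b}$ with $a+b=n$, which has the same domain $\dom H_{0}^{n/2}$. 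For the base case $n=1$, the $k=0$ hypothesis $\sup_{x}\|B(x)\langle A_{0}\rangle_{z}^{-1}\|\to 0$ together with the form estimate $\one\otimes\langle A_{0}\rangle^{2}\leq H_{0}+1$ yields $\|M_{B}\psi\|\leq\varepsilon\|D_{0}\psi\|+C_{\varepsilon}\|\psi\|$ for every $\varepsilon>0$, so Kato--Rellich gives closedness of $T_{B,0}=D_{0}+M_{B}$ and $T_{B,1}=D_{0}^{\ast}+M_{B^{\ast}}$ on $\dom D_{0}=\dom D_{0}^{\ast}=\dom H_{0}^{1/2}$.

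For the inductive step, I would expand
\begin{align*}
S_{B,\eta}=\prod_{j=1}^{n}\bigl(T_{0,\eta_{j}}+M_{B_{\eta_{j}}}\bigr),\qquad B_{0}:=B,\ B_{1}:=B^{\ast},
\end{align*}
into its $2^{n}$ monomials. The monomial without any $M$-factor is $S_{0,\eta}$; every other monomial contains at most $n-1$ differential factors. Using the commutator identities $[\Ii c\nabla,M_{T}]=M_{\Ii c\nabla T}$ and $[\one\otimes A_{0},M_{T}]=M_{[A_{0},T]}$, I would iteratively move each $M$-factor past the differential factors to produce a representation of $S_{B,\eta}-S_{0,\eta}$ as a finite sum of terms of the form $P(D_{0},D_{0}^{\ast})\cdot M_{R}$, where $P$ has total degree at most $n-1$ and $R$ is a polynomial in iterated conjugates $\langle A_{0}\rangle^{k}\p^{\gamma}B\langle A_{0}\rangle^{-k}$ (and likewise in $B^{\ast}$) with $|\gamma|\leq n-1$, $k\leq n-1$. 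The two families of hypotheses of the lemma are precisely what makes each such $M_{R}$ an $\langle A_{0}\rangle$-bounded operator with relative bound zero, and the form bounds $\one\otimes\langle A_{0}\rangle^{2k}+|D_{0}|^{2k}+|D_{0}^{\ast}|^{2k}\leq c(H_{0}^{k}+1)$ (valid by the spectral calculus for the normal $D_{0}$) then promote this to $H_{0}^{n/2}$-relative boundedness with relative bound zero. Kato--Rellich applied to $S_{B,\eta}=S_{0,\eta}+(S_{B,\eta}-S_{0,\eta})$ gives closedness of $S_{B,\eta}$ on $\dom H_{0}^{n/2}$.

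For the self-adjointness claim, since $D_{0}$ is normal, $S_{0,\eta}=D_{0}^{a}(D_{0}^{\ast})^{b}$ is symmetric iff $a=b$, in which case $n$ is even and $S_{0,\eta}=H_{0}^{n/2}$. The symmetry hypothesis on $S_{B,\eta}$, by a leading-order argument as $B\to 0$, forces exactly this case, so under this hypothesis $S_{B,\eta}$ is a symmetric $H_{0}^{n/2}$-bounded perturbation of the self-adjoint operator $H_{0}^{n/2}$ with relative bound zero, and a second application of Kato--Rellich yields self-adjointness. The main obstacle is the combinatorial bookkeeping of the commutator expansion: each commutator with a differential factor splits into two inhomogeneous pieces, so iterating through $n$ slots generates a tree of $O(2^{n})$ monomials, and one must verify that every leaf has total combined \emph{order} (derivatives of $B$, powers of $D_{0}/D_{0}^{\ast}$, and compensating $A_{0}$-weights) strictly less than $n$. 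The bound $n\leq 2N$ combined with $B\in C^{1}_{b}(\IR^{d},A_{0},2N+1)$ is precisely calibrated so that all iterated conjugates $\langle A_{0}\rangle^{k}B\langle A_{0}\rangle^{-k}$ for $k\leq n-1$ and partial derivatives $\p^{\gamma}B$ for $|\gamma|\leq n-1$ remain within the controlled regularity class of the lemma's hypotheses.
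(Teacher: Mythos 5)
Your overall skeleton (identification of $\dom H_{0}^{n/2}$ via the joint functional calculus of $c\nabla$ and $A_{0}$, then Kato--Rellich plus induction on $n$) matches the paper's, but the decomposition you choose for $S_{B,\eta}-S_{0,\eta}$ is different, and it is where the argument breaks. The paper telescopes one factor at a time,
\begin{align*}
S_{B,\eta}-S_{0,\eta}=\sum_{j}\Bigl(\prod_{l<j}T_{B,\eta_{l}}\Bigr)M_{B}\Bigl(\prod_{l>j}T_{0,\eta_{l}}\Bigr),
\end{align*}
so that the prefix is controlled by the induction hypothesis, the suffix commutes with $H_{0}$ and is absorbed into the resolvent, and the only new estimate is that $M_{B}(H_{0}+z)^{-\frac{j}{2}}$ maps $L^{2}$ into $W^{j-1,2}(\IR^{d},\IC^{r}\otimes H)\cap L^{2}(\IR^{d},\IC^{r}\otimes\dom A_{0}^{j-1})$ with norm tending to $0$; by the Leibniz rule this requires exactly the two stated families of bounds (pure conjugations $\langle A_{0}\rangle^{k}B\langle A_{0}\rangle^{-k}$ and pure derivatives $\p^{\beta}B\langle A_{0}\rangle^{-|\beta|}$), and no commutator of $B$ with $A_{0}$ ever arises. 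Your normal-ordering of all $2^{n}$ monomials instead iterates $[\one\otimes A_{0},M_{T}]=M_{[A_{0},T]}$ against $[\Ii c\nabla,M_{T}]=M_{\Ii c\nabla T}$ and therefore produces symbols of the mixed type $\langle A_{0}\rangle^{k}\p^{\gamma}B\langle A_{0}\rangle^{-k-|\gamma|}$ with both $k\geq 1$ and $|\gamma|\geq 1$ (you list these yourself as ``iterated conjugates''). The lemma's hypotheses control only the two pure families: smallness of $\langle A_{0}\rangle^{k}B\langle A_{0}\rangle^{-k}\langle A_{0}\rangle_{z}^{-1}$ does not pass to its spatial derivatives, and the standing regularity $B\in C^{1}_{b}(\IR^{d},A_{0},2N+1)$ supplies only one $x$-derivative of the conjugated family. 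So for $n\geq 3$ the leaves of your expansion involve quantities that none of the hypotheses bound, and your assertion that ``the two families of hypotheses of the lemma are precisely what makes each such $M_{R}$ \ldots relatively bounded'' is exactly the gap.

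Two further points. First, your normal form puts $P(D_{0},D_{0}^{\ast})$ to the left of $M_{R}$; to estimate $\|P(D_{0},D_{0}^{\ast})M_{R}(H_{0}+z)^{-\frac{n}{2}}\|$ you must then conjugate $M_{R}$ back through $(H_{0}+z)^{\deg P/2}$, i.e.\ show that $M_{R}$ is a multiplier on the Sobolev--$A_{0}$ scale, which costs yet more derivatives and conjugations of $R$; the paper's arrangement, with the multiplication operator directly against the resolvent, avoids this. Second, the self-adjointness step as you state it does not work: $B$ is a fixed family, so there is no ``leading-order argument as $B\to 0$'', and one should not need to argue that symmetry of $S_{B,\eta}$ forces $a=b$; the paper simply feeds the same relative-bound estimate $\|(S_{B,\eta}-S_{0,\eta})(H_{0}+z)^{-\frac{n}{2}}\|\to 0$ into the Kato--Rellich theorems for closedness and, in the symmetric case, for self-adjointness.
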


\begin{proof}
    We first observe that $A_{0}$ and $c\nabla$ commute, so we may diagonalize $A_{0}$ and $c\nabla$ simultaneously. The equivalence of the graph norm of $H_{0}^{\frac{n}{2}}$ and the norm of $W^{n,2}(\IR^{d},\IC^{r}\otimes H)\cap L^2(\IR^{d},\IC^{r}\otimes\dom A_{0}^{n})$ then follows from the fact that the polynomials $(x^2+y^2+1)^{n}$ and $x^{2n}+y^{2n}+2$ are uniformly bounded with respect to each other (with a constant only dependent on $n$). The same type of argument implies that $S_{0,\eta}$ is closed on $W^{n,2}(\IR^{d},\IC^{r}\otimes H)\cap L^2(\IR^{d},\IC^{r}\otimes\dom A_{0}^{n})$. To show that $S_{B,\eta}$ is closed on the same domain (and self-adjoint if symmetric), we want to apply the Kato-Rellich theorems \cite{Kato}[Theorem 4.1.1 and Theorem 5.4.3]. It therefore suffices to show that
    \begin{align}
        \lim_{z\to\infty}\left\|(S_{B,\eta}-S_{0,\eta})(H_{0}+z)^{-\frac{n}{2}}\right\|_{B\left(L^2(\IR^{d},\IC^{r}\otimes H)\right)}=0
    \end{align}
    We proceed by induction on $n\in\IN_{0}$. For $n=0$ the statement is trivially true. For $n\in\IN$ we write
    \begin{align}
        S_{B,\eta}-S_{0,\eta}=\sum_{j=1}^{n-1}\left(\prod_{l=1}^{j-1}T_{B,\eta_{l}}\right)M_{B}\left(\prod_{l=j+1}^{n}T_{0,\eta_{l}}\right)
    \end{align}
    By induction, $\prod_{l=1}^{j-1}T_{B,\eta_{l}}$ is relatively bounded with respect to $H_{0}^{\frac{j-1}{2}}$, and $\prod_{l=j+1}^{n}T_{0,\eta_{l}}$ commutes with $H_{0}$ and is relatively bounded by $H_{0}^{\frac{n-j}{2}}$, so it suffices to show that
    \begin{align}\label{eq:lem:hdomproperty:1}
        \left\|M_{B}(H_{0}+z)^{-\frac{j}{2}}\right\|_{B\left(L^2\left(\IR^{d},\IC^{r}\otimes H\right),W^{j-1,2}(\IR^{d},\IC^{r}\otimes H)\cap L^2(\IR^{d},\IC^{r}\otimes\dom A_{0}^{j-1})\right)}\xrightarrow{z\to\infty}0.
    \end{align}
    We apply the product rule to multiplication with $B$, and the fact that the norm of a multiplication operator is given by the supremum of the operator norms in each fibre, and see that (\ref{eq:lem:hdomproperty:1}) holds if
    \begin{align}
        \sup_{x\in\IR^{d}}\left\|\langle A_{0}\rangle^{j-1}B(x)\langle A_{0}\rangle^{1-j}\langle A_{0}\rangle_{z}^{-1}\right\|_{B(H)}\xrightarrow{z\to\infty}0,\nonumber\\
        \sup_{x\in\IR^{d}}\left\|\p^{\beta}B(x)\langle A_{0}\rangle^{-|\beta|}\langle A_{0}\rangle_{z}^{-1}\right\|_{B(H)}\xrightarrow{z\to\infty}0,\ \beta\in\IN_{0}^{d},\ |\beta|\leq j-1.
    \end{align}
\end{proof}

A simple application of the previous lemma is the following result on convex combinations of the operators $D_{B}^{\ast}D_{B}$ and $D_{B}D_{B}^{\ast}$.

\begin{cor}\label{cor:uninormequiv}
     Assume Hypothesis \ref{hyp:smoothhyp}, and let $s\in[-\frac{1}{2},\frac{1}{2}]$. Then $H_{B,s}:=(\frac{1}{2}-s)D_{B}^{\ast}D_{B}+(\frac{1}{2}+s)D_{B}D_{B}^{\ast}$ is non-negative self-adjoint on $\dom H_{0}$, and the graph norms of $H_{0}^{k}$, and $H_{B,s}^{k}$ are equivalent, uniformly in $s$ for $1\leq k\leq 2N$.
\end{cor}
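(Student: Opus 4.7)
The plan is to reduce the corollary to Lemma~\ref{lem:hdomproperty} through an algebraic expansion of $H_{B,s}^k$ as a linear combination of alternating products of $D_B$ and $D_B^\ast$, then to invoke Kato--Rellich for the base case and a swap of model/perturbation for the reverse direction.

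\textbf{Stage 1 (the case $k=1$).} In the notation of Lemma~\ref{lem:hdomproperty}, $D_B^\ast D_B = S_{B,(1,0)}$ and $D_B D_B^\ast = S_{B,(0,1)}$; applying that lemma with $n=2$ shows both are closed on $\dom H_0$, and since both are trivially symmetric, the last clause of the lemma yields self-adjointness. Hence $H_{B,s}$ is symmetric on $\dom H_0$ as a non-negative convex combination of non-negative self-adjoint operators sharing that domain. To upgrade symmetry to self-adjointness I would apply Kato--Rellich to the pair $(H_0,H_{B,s}-H_0)$, establishing
\begin{align*}
\lim_{z\to\infty}\ \sup_{s\in[-1/2,1/2]}\ \bigl\|(H_{B,s}-H_0)(H_0+z)^{-1}\bigr\|=0.
\end{align*}
The perturbation decomposes as $\tfrac12(D_B^\ast D_B+D_BD_B^\ast-D_0^\ast D_0-D_0D_0^\ast)+s([D_B,D_B^\ast]-[D_0,D_0^\ast])$, with the commutators reducing to first-order multiplication operators in $\nabla B$; the required smallness is the $n=2$ case of the estimates already carried out in the proof of Lemma~\ref{lem:hdomproperty}, and uniformity in $s\in[-1/2,1/2]$ is automatic since the coefficients $(1/2\pm s),s$ lie in $[-1,1]$.

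\textbf{Stage 2 (forward bound for $k\geq 1$).} For $1\leq k\leq 2N$ I will expand
\begin{align*}
H_{B,s}^k \;=\; \sum_{w\in\{D_B^\ast D_B,\,D_BD_B^\ast\}^k} c_{w,s}\, S_{B,\eta(w)},
\end{align*}
where $\eta(w)\in\{0,1\}^{2k}$ is the concatenation of the letters of $w$ and $|c_{w,s}|\leq 1$ uniformly in $s\in[-1/2,1/2]$. Lemma~\ref{lem:hdomproperty} with $n=2k$ says each $S_{B,\eta(w)}$ is closed on $\dom H_0^k$; combined with the closed-graph theorem (both $S_{B,\eta(w)}$ and $H_0^k$ are closed operators on the same Banach-space domain) this yields $\|S_{B,\eta(w)}\psi\|\leq C(\|H_0^k\psi\|+\|\psi\|)$ on $\dom H_0^k$, with $C$ independent of $w$ by tracking constants through the Kato--Rellich induction in the lemma's proof. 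Summing produces the forward half of the claimed graph-norm equivalence, uniformly in $s$.

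\textbf{Stage 3 (reverse bound, and the main obstacle).} The reverse inequality follows by swapping the roles of $(A_0, B)$ and $(A_0+B, -B)$: by the remark following Hypothesis~\ref{hyp:calliashyp}, $A_0+B(\cdot)$ may itself serve as a model operator, the estimates of Hypothesis~\ref{hyp:smoothhyp} are symmetric in the sign of $B$, and the Hilbert scales generated by $A_0$ and $A_0+B(\cdot)$ are $x$-uniformly equivalent by a fibrewise application of Lemma~\ref{lem:katorellich}. Re-running Stages 1--2 in this swapped setup furnishes $\|H_0^k\psi\|\leq C'(\|H_{B,s}^k\psi\|+\|\psi\|)$, uniformly in $s$, completing the graph-norm equivalence. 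The principal technical hurdle is Stage 1's relative-smallness estimate uniformly in $s$, effectively a repetition of the $n=2$ Kato--Rellich bookkeeping from Lemma~\ref{lem:hdomproperty}; a secondary subtlety is Stage 3's verification that Hypothesis~\ref{hyp:smoothhyp} is preserved under the swap, which is essentially a uniform change-of-Hilbert-scale argument.
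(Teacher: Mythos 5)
Your Stages 1 and 2 reproduce the paper's argument: the paper likewise expands $H_{B,s}^{k}=\sum_{\eta\in\{0,1\}^{2k}}p_{\eta}(s)S_{B,\eta}$ with coefficients polynomial in $s$ and uniformly bounded on $[-\frac{1}{2},\frac{1}{2}]$, and reads closedness, the domain identification and the graph-norm bound off Lemma \ref{lem:hdomproperty}; symmetry plus the Kato--Rellich smallness gives self-adjointness, and non-negativity follows as you say from the convex-combination structure. (A range quibble you share with the paper: for $k$ up to $2N$ this invokes the lemma with $n=2k$ up to $4N$, beyond its stated range $n\leq 2N$.)

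The genuine problem is Stage 3. The reverse inequality neither needs nor is correctly delivered by swapping $(A_{0},B)$ with $(A_{0}+B,-B)$. First, the framework requires a \emph{fixed} self-adjoint model operator: the remark after Hypothesis \ref{hyp:calliashyp} only licenses replacing $A_{0}$ by $A(x_{0})$ for a fixed $x_{0}$, not by the $x$-dependent family $A(\cdot)$, so "$D_{B}$ as the free operator of a swapped setup" is not available. Second, even granting the swap formally, the free operator of the swapped picture satisfies $D_{0}D_{0}^{\ast}=D_{0}^{\ast}D_{0}$ (since $A_{0}$ is constant), so re-running Stage 2 there compares $H_{0}^{k}$ with $H_{B,0}^{k}$ only; it gives no control by $H_{B,s}^{k}$ for $s\neq 0$, and the asserted $s$-uniformity would require comparing $H_{B,0}^{k}$ with $H_{B,s}^{k}$, which is essentially the statement being proved. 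The correct (and much shorter) route is the absorption argument already implicit in your Stage 1 and in the lemma's proof: the lemma establishes $\lim_{z\to\infty}\bigl\|(S_{B,\eta}-S_{0,\eta})(H_{0}+z)^{-k}\bigr\|=0$, hence $V_{s}:=H_{B,s}^{k}-H_{0}^{k}=\sum_{\eta}p_{\eta}(s)(S_{B,\eta}-S_{0,\eta})$ satisfies $\|V_{s}\psi\|\leq\epsilon\|H_{0}^{k}\psi\|+C_{z}\|\psi\|$ with $\epsilon<1$ uniformly in $s$ for $z$ large; then $\|H_{0}^{k}\psi\|\leq\|H_{B,s}^{k}\psi\|+\|V_{s}\psi\|$ and absorbing the $\epsilon\|H_{0}^{k}\psi\|$ term gives the reverse bound (the same estimate also yields your forward bound, making the closed-graph detour unnecessary). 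With Stage 3 replaced by this absorption step, your proof coincides with the paper's.
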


\begin{proof}
    For $\eta\in\{0,1\}^{2k}$ there are polynomials $p_{\eta}$, independent of $B$, such that
    \begin{align}
        H_{B,s}^{k}=\sum_{\eta\in\{0,1\}^{2k}}p_{\eta}(s)S_{B,\eta},
    \end{align}
    where the operators $S_{B,\eta}$ are defined in Lemma \ref{lem:hdomproperty}, and have equivalent graph-norms to $H_{0}$. It follows that the graph-norm of $H_{B,s}$ is $s$-uniformly equivalent to the graph norm of $H_{0}^{k}$ after passing to the supremum over $s\in[-\frac{1}{2},\frac{1}{2}]$ in the above decomposition of $H_{B,s}^{k}$. Because $H_{B,s}$ is symmetric it is self-adjoint on $\dom H_{0}$ by Lemma \ref{lem:hdomproperty}. Since $D_{B}^{\ast}D_{B}$, and $D_{B}D_{B}^{\ast}$ are  non-negative on $\dom H_{0}$, also $H_{B,s}$ is non-negative on $\dom H_{0}$.
\end{proof}

We are equipped to translate the properties of the operator family $B$ from Hypothesis \ref{hyp:smoothhyp} into conditions needed for the framework developed abstractly in the previous chapter for the multiplication operators associated with $\Ii c\nabla_{(R)}A$.

\begin{lem}\label{lem:hbproperty}
   Assume Hypothesis \ref{hyp:smoothhyp}. Then
    \begin{align}
        \forall\epsilon>0:~M_{\Ii c\nabla A}\in S^{N,d+\epsilon,\frac{\alpha}{2}+1}_{H_{B,s}},\nonumber\\
        \exists\delta>0:~ M_{\Ii c\nabla_{R}A}\in S^{N,d-\delta,\frac{\alpha}{2}+1}_{H_{B,s}},
    \end{align}
    and
    \begin{align}
        \forall\epsilon>0:~\sup_{s\in[-\frac{1}{2},\frac{1}{2}]}\left\|M_{\Ii c\nabla A}\right\|_{S^{N,d+\epsilon,\frac{\alpha}{2}+1}_{H_{B,s}}}<\infty,\nonumber\\
        \exists\delta>0:~ \sup_{s\in[-\frac{1}{2},\frac{1}{2}]}\left\|M_{\Ii c\nabla_{R}A}\right\|_{S^{N,d-\delta,\frac{\alpha}{2}+1}_{H_{B,s}}}<\infty.
    \end{align}
\end{lem}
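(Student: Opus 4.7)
The plan is to reduce to the free model $H_0$ and then apply a Birman--Solomyak type Schatten estimate. By Corollary \ref{cor:uninormequiv}, the graph norms of $H_{B,s}^k$ and $H_0^k$ are $s$-uniformly equivalent for integer $1\leq k\leq N-1$; a standard interpolation argument upgrades this to uniform boundedness of $\langle H_{B,s}\rangle^\beta\langle H_0\rangle^{-\beta}$ (and its inverse) for all real $|\beta|\leq N-1$. This reduces the problem to estimating the Schatten norms with $H_0$ in place of $H_{B,s}$. The Clifford identity $(\Ii c\nabla)^2=\Delta$, together with the fact that $A_0$ is independent of $x$, yields directly $H_0 = D_0^\ast D_0 = D_0 D_0^\ast = -\Delta\otimes\one_H + \one_{\IC^r}\otimes A_0^2$, so $-\Delta$ and $A_0^2$ commute and admit a joint spectral calculus.

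Using this commutativity, for $\gamma\geq 0$ one has the elementary equivalence $\langle H_0\rangle^\gamma \asymp \langle-\Delta\rangle^\gamma + \langle A_0\rangle^{2\gamma}$, with a corresponding bound for $\gamma<0$. I would use this to split the required Schatten norm of $\langle H_0\rangle^\beta M_{\Ii c\nabla A}\langle H_0\rangle^{-\beta-\alpha/2-1}$ into a finite number of terms of the form $\langle-\Delta\rangle^{j_1}\langle A_0\rangle^{k_1}\, M_{\Ii c\nabla B}\, \langle-\Delta\rangle^{-j_2}\langle A_0\rangle^{-k_2}$, with exponents determined by $\beta$ and $\alpha$. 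In each term I commute $M_{\Ii c\nabla B}$ past the spatial factors $\langle-\Delta\rangle^{\cdot}$, producing higher spatial derivatives $\p^\gamma\nabla B$ with $|\gamma|\leq 2N$, and past the internal factors $\langle A_0\rangle^{\cdot}$, producing weighted coefficients $\langle A_0\rangle^{\gamma_1}\p^\gamma\nabla B(x)\langle A_0\rangle^{-\gamma_1-\alpha-|\gamma|}$. Hypothesis \ref{hyp:smoothhyp} is tailored precisely so that these coefficients are $S^d(H)$-valued, uniformly in the admissible exponents and with norm bounded by $C\langle x\rangle^{-1}$ in $x$.

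Each resulting term takes the structure $M_F\cdot g(-\Delta)$ (up to uniformly bounded pre- and post-factors on the fibre), with $F:\IR^d\to S^d(H)$ decaying like $\langle x\rangle^{-1}$ and $g(\xi)=\langle\xi\rangle^{-\kappa}$ for some $\kappa>0$ satisfying $\kappa(d+\epsilon)>d$. The Birman--Solomyak/Simon Schatten estimate for operators of this form then gives a bound $\|M_F g(-\Delta)\|_{S^{d+\epsilon}(L^2(\IR^d,H))} \leq c\,\|F\|_{L^{d+\epsilon}(\IR^d;S^d(H))}\,\|g\|_{L^{d+\epsilon}(\IR^d)}$, which is finite by the $\langle x\rangle^{-1}$ decay of $F$ (giving $L^{d+\epsilon}$ since $d+\epsilon>d$) and by the choice of $\kappa$; the $s$-uniformity follows from the first step. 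For the radial operator $M_{\Ii c\nabla_R A}$ the argument is identical, but the strictly stronger decay $\langle x\rangle^{-1-\delta}$ of $\p_R B$ provided by Hypothesis \ref{hyp:smoothhyp} now places $F$ in $L^{d-\delta}(\IR^d;S^{d-\delta}(H))$, sharpening the Schatten index to $d-\delta$.

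The main obstacle is accounting for the non-commutation between the two regularizers $\langle-\Delta\rangle$ and $\langle A_0\rangle$ across the operator-valued multiplication $M_{\Ii c\nabla A}$: each commutation produces derivatives $\p^\gamma\nabla B$ paired with additional $A_0$-weights, and the exponent $\alpha+|\gamma|$ appearing in the $S^{0,d,\alpha+|\gamma|}_{A_0}$-norms of Hypothesis \ref{hyp:smoothhyp} is designed precisely to make this bookkeeping close up. The bound $|\gamma|\leq 2N$ in Hypothesis \ref{hyp:smoothhyp} matches the requirement $|\beta|\leq N-1$ in the definition of $S^{N,\cdot,\cdot}_{H_{B,s}}$, so that the commutator expansion terminates with controlled terms.
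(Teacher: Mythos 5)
Your overall architecture coincides with the paper's: reduce from $H_{B,s}$ to $H_{0}$ via the $s$-uniform graph-norm equivalence of Corollary \ref{cor:uninormequiv}, exploit the product structure $H_{0}=-\Delta\otimes\one_{H}+\one_{\IC^{r}}\otimes A_{0}^{2}$, commute derivatives onto $B$ so that the weighted norms of Hypothesis \ref{hyp:smoothhyp} (with exponents $\alpha+|\gamma|$, $|\gamma|\leq 2N$) absorb the bookkeeping, and close with a Birman--Solomyak-type bound (the paper invokes \cite{Furst1}[Lemma 2.8] together with $\langle X\rangle^{-2}\in L^{d-\delta}(\IR^{d})$). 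The one step that does not work as written is your treatment of the full range of real exponents $\beta$ in the definition of $S^{N,p,\alpha}_{H_{0}}$: you propose to split $\langle H_{0}\rangle^{\beta}M_{\Ii c\nabla A}\langle H_{0}\rangle^{-\beta-\frac{\alpha}{2}-1}$ into finitely many terms $\langle-\Delta\rangle^{j_{1}}\langle A_{0}\rangle^{k_{1}}M_{\Ii c\nabla B}\langle-\Delta\rangle^{-j_{2}}\langle A_{0}\rangle^{-k_{2}}$ and then commute $M_{\Ii c\nabla B}$ past the spatial factors to produce derivatives $\p^{\gamma}\nabla B$ with $|\gamma|\leq 2N$. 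For non-integer $\beta$ the exponents $j_{1},j_{2}$ are non-integer, and a fractional power of $-\Delta$ does not admit a finite Leibniz expansion against an (operator-valued) multiplication operator; the commutator $[\langle-\Delta\rangle^{j},M_{F}]$ is not a finite sum of differential operators applied to $F$, so your expansion does not terminate "with controlled terms" as claimed.

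The paper circumvents exactly this obstruction: it verifies Schatten membership only at the two integer endpoints, namely $\langle H_{0}\rangle^{N}M_{\Ii c\nabla A}\langle H_{0}\rangle^{-\frac{\alpha}{2}-N-1}$ (where the graph norm of $H_{0}^{N}$ is characterized by $W^{2N,2}(\IR^{d},\IC^{r}\otimes H)\cap L^{2}(\IR^{d},\IC^{r}\otimes\dom A_{0}^{2N})$, so only integer derivatives up to order $2N$ appear) and $\Ii c\nabla A\langle H_{0}\rangle^{-\frac{\alpha}{2}-1}$, and then obtains all intermediate exponents $\overline{T_{i}^{-z}S_{i}T_{i}^{-1+z}}\in S^{p_{i}}$, $z\in[0,1]$, from the complex interpolation theorem \cite{GLST}[Theorem 3.2] for Schatten ideals. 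You do gesture at interpolation in your first step, but only for replacing $H_{B,s}$ by $H_{0}$; you need it a second time, in place of the fractional commutator expansion, to cover all $|\beta|\leq N-1$. With that substitution the rest of your argument goes through and matches the paper's.
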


\begin{proof}
    Because the graph norms of $H_{B,s}^{\frac{k}{2}}$ and $H_{0}^{\frac{k}{2}}$ are $s$-uniformly equivalent by Corollary \ref{cor:uninormequiv}, it suffices to show
    \begin{align}
        \forall\epsilon>0:~M_{\Ii c\nabla A}\in S^{N,d+\epsilon,\frac{\alpha}{2}+1}_{H_{0}},\nonumber\\
        \exists\delta>0:~ M_{\Ii c\nabla_{R}A}\in S^{N,d-\delta,\frac{\alpha}{2}+1}_{H_{0}}.
    \end{align}
    These conditions are satisfied if the operators
    \begin{align}
        T_{1}:=\langle H_{0}\rangle^{\frac{\alpha}{2}+N+1},~S_{1}:=\overline{\langle H_{0}\rangle^{N}M_{\Ii c\nabla A}},\nonumber\\
        T_{2}:=\langle H_{0}\rangle^{\frac{\alpha}{2}+N+1},~S_{2}:=\overline{\langle H_{0}\rangle^{N}M_{\Ii c\nabla_{R}A}},
    \end{align}
    satisfy for all $z\in[0,1]$
    \begin{align}
        \overline{T_{i}^{-z}S_{i}T_{i}^{-1+z}}\in S^{p_{i}},
    \end{align}
    where $p_{1}:=d+\epsilon$, and $p_{2}:=d-\delta$. We apply \cite{GLST}[Theorem 3.2], which is a complex interpolation theorem, so it is enough to show that
    \begin{align}
        S_{i}T_{i}^{-1},~S_{i}^{\ast}T_{i}^{-1}\in S^{p_{i}},
    \end{align}
    which means
    \begin{align}\label{eq:lem:hbproperty:1}
        \forall\epsilon>0:~\langle H_{0}\rangle^{N}M_{\Ii c\nabla A}\langle H_{0}\rangle^{-\frac{\alpha}{2}-N-1},~\Ii c\nabla A\langle H_{0}\rangle^{-\frac{\alpha}{2}-1}\in S^{d+\epsilon},\nonumber\\
        \exists\delta>0:~ \langle H_{0}\rangle^{N}M_{\Ii c\nabla_{R} A}\langle H_{0}\rangle^{-\frac{\alpha}{2}-N-1},~\Ii c\nabla_{R} A\langle H_{0}\rangle^{-\frac{\alpha}{2}-1}\in S^{d-\delta}.
    \end{align}
    Since the graph norm of $H_{0}^{N}$ is equivalent to the norm of $W^{2N,2}(\IR^{d},\IC^{r}\otimes H)\cap L^2(\IR^{d},\IC^{r}\otimes\dom A_{0}^{2N})$, (\ref{eq:lem:hbproperty:1}) is satisfied if
    \begin{align}\label{eq:lem:hbproperty:2}
        \forall\epsilon>0:\forall\gamma\in\IN_{0}^{d},|\gamma|\leq 2N:&~M_{A_{0}}^{2N}\Ii c\nabla A\langle H_{0}\rangle^{-\frac{\alpha}{2}-N-1},\nonumber\\
        &\p^{\gamma}\Ii c\nabla A\langle H_{0}\rangle^{-\frac{\alpha}{2}-N-1},~\Ii c\nabla A\langle H_{0}\rangle^{-\frac{\alpha}{2}-1}\in S^{d+\epsilon},\nonumber\\
        \exists\delta>0:\forall\gamma\in\IN_{0}^{d},|\gamma|\leq 2N:&~ M_{A_{0}}^{2N}\Ii c\nabla_{R} A\langle H_{0}\rangle^{-\frac{\alpha}{2}-N-1},\nonumber\\
        &\p^{\gamma}\Ii c\nabla_{R} A\langle H_{0}\rangle^{-\frac{\alpha}{2}-N-1},~\Ii c\nabla_{R} A\langle H_{0}\rangle^{-\frac{\alpha}{2}-1}\in S^{d-\delta}.
    \end{align}
    We apply the Leibniz rule, and recall that $H_{0}^{\frac{k}{2}}$ commutes with $A_{0}$ and $\di$ and relatively bounds $A_{0}^{k}$ and $\di^{k}$, and thus (\ref{eq:lem:hbproperty:2}) holds if
    \begin{align}\label{eq:lem:hbproperty:3}
        &\forall\epsilon>0:\forall\gamma\in\IN_{0}^{d},|\gamma|\leq 2N:~M_{A_{0}}^{2N}\Ii c\nabla A\langle M_{A_{0}}\rangle^{-2N-\alpha}\langle\di\rangle^{-2},\nonumber\\
        &\p^{\gamma}\left(\Ii c\nabla A\right)\langle M_{A_{0}}\rangle^{-
        |\gamma|-\alpha}\langle\di\rangle^{-2},~\Ii c\nabla A\langle M_{A_{0}}\rangle^{-\alpha}\langle\di\rangle^{-2}\in S^{d+\epsilon},\nonumber\\
        &\exists\delta>0:\forall\gamma\in\IN_{0}^{d},|\gamma|\leq 2N:~M_{A_{0}}^{2N}\Ii c\nabla_{R} A\langle M_{A_{0}}\rangle^{-2N-\alpha}\langle\di\rangle^{-2},\nonumber\\
        &\p^{\gamma}\left(\Ii c\nabla_{R} A\right)\langle M_{A_{0}}\rangle^{-
        |\gamma|-\alpha}\langle\di\rangle^{-2},~\Ii c\nabla_{R}A\langle M_{A_{0}}\rangle^{-\alpha}\langle\di\rangle^{-2}\in S^{d-\delta}.
    \end{align}
    We apply \cite{Furst1}[Lemma 2.8], also noting that $\langle X\rangle^{-2}\in L^{d-\delta}(\IR^{d})$ for $\delta>0$ small enough, and see that (\ref{eq:lem:hbproperty:3}) is satisfied if Hypothesis \ref{hyp:smoothhyp} holds.
\end{proof}

The following proposition gives the construction of the spectral shift function which allows us to re-express the left hand side of the principal trace formula in Theorem \ref{thm:principaltrace}, and is the second main result of this chapter. We note that the order of the spectral shift function is $d+1$, which we have to rectify later.

\begin{prop}\label{prop:leftspectralshift}
    Assume Hypothesis \ref{hyp:smoothhyp}. Then there exists a unique function $\Xi_{d,A_{0},B}\in\langle X\rangle^{N+1}L^{1}(\IR_{\geq 0})$, such that for 
    $f\in W^{N,d+1}_{0}\cap\bigcap_{j=0}^{d+1}W^{0,j}_{0}$,
    \begin{align}\label{eq:prop:leftspectralshift:5}
         \Tr_{L^2(\IR^{d},H)}\tr_{\IC^{r}}\left(f\left(D_{B}^{\ast}D_{B}\right)-f\left(D_{B}D_{B}^{\ast}\right)\right)=\int_{0}^{\infty}f^{(d+1)}(\lambda)\Xi_{d,A_{0},B}(\lambda)\Id\lambda.
    \end{align}
\end{prop}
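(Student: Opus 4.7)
Set $H_{B,0}:=\tfrac{1}{2}(D_{B}^{\ast}D_{B}+D_{B}D_{B}^{\ast})$; a direct computation gives $H_{B,0}=-\Delta\otimes\one_{\IC^{r}\otimes H}+\one_{\IC^{r}}\otimes M_{A^{2}}$, so $H_{B,0}$ commutes with the Clifford action on $\IC^{r}$, while the ``odd'' part $V:=D_{B}D_{B}^{\ast}-D_{B}^{\ast}D_{B}=2\Ii M_{c\nabla A}$ carries the entire Clifford structure. One has $D_{B}D_{B}^{\ast}=H_{B,0}+V/2$ and $D_{B}^{\ast}D_{B}=H_{B,0}-V/2$. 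The plan is to non-commutatively Taylor-expand $f(H_{B,0}\pm V/2)$ around $H_{B,0}$, observe that the low-order terms are annihilated by $\tr_{\IC^{r}}$ via Clifford trace vanishing, and apply the machinery of Theorem \ref{thm:spectralshiftfunctionexist} to the surviving pieces.

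First, I would apply Proposition \ref{prop:divideddifference} at order $d+1$ to write $f(H_{B,0}\pm V/2)=\sum_{k=0}^{d}T_{k}(f,H_{B,0},\pm V/2)+R_{d+1}(f,H_{B,0},\pm V/2)$, where $T_{k}(f,H_{B,0},sV)=s^{k}\mathcal{J}(\dif_{k}f,H_{B,0},\mathbf{V}_{k})$. Forming the difference $f(D_{B}D_{B}^{\ast})-f(D_{B}^{\ast}D_{B})$, the even-$k$ summands cancel by the parity of $(\pm1/2)^{k}$, so only the odd-$k$ terms $k\in\{1,3,\ldots,d\}$ and the remainder difference $R_{d+1}^{+}-R_{d+1}^{-}$ contribute. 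Since $H_{B,0}$ commutes with each $c^{i}$, the MOI $\mathcal{J}(\dif_{k}f,H_{B,0},\mathbf{V}_{k})$ factors (by multilinearity and the expansion $V=2\Ii\sum_{i}c^{i}M_{\partial_{i}A}$) into a sum of Clifford words $c^{i_{1}}\cdots c^{i_{k}}$ tensored with scalar MOIs on $L^{2}(\IR^{d},H)$. For each odd $k<d$, Lemma \ref{lem:clifford}(i) gives $\tr_{\IC^{r}}(c^{i_{1}}\cdots c^{i_{k}})=0$, annihilating the entire Taylor summand under $\tr_{\IC^{r}}$. Hence only the $k=d$ summand and $R_{d+1}^{+}-R_{d+1}^{-}$ survive.

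For these surviving pieces I would use Lemma \ref{lem:centraltorighttrace} to pull one factor of $V$ outside the MOI under the full trace. Representing the remainder by Proposition \ref{prop:divideddifference} as $(d+1)(\pm1/2)^{d+1}\int_{0}^{1}(1-t)^{d}\mathcal{J}(\dif_{d+1}f,H_{B,\pm t/2},\mathbf{V}_{d+1})\Id t$ and then applying Lemma \ref{lem:centraltorighttrace} (with $n=d+1$) converts the $t$-integrand into $(d+1)^{-1}\Tr\bigl(V\,\mathcal{J}(\dif_{d}f',H_{B,\pm t/2},\mathbf{V}_{d})\bigr)$. Theorem \ref{thm:spectralshiftfunctionexist} then applies with $n=d$, $A=H_{B,\pm t/2}$, $T_{0}=V$ and perturbation $V$, producing a density $\eta^{(t,\pm)}\in\langle X\rangle^{N+1}L^{1}(\IR_{\ge 0})$ for which this trace equals $\int_{0}^{\infty}f^{(d+1)}(\lambda)\eta^{(t,\pm)}(\lambda)\Id\lambda$. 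The hypotheses of Theorem \ref{thm:spectralshiftfunctionexist} are verified by Lemma \ref{lem:hbproperty} (uniform-in-$s$ Schatten bound $V\in S^{N,d+\epsilon,\alpha/2+1}_{H_{B,s}}$), Corollary \ref{cor:uninormequiv} (uniform graph-norm equivalence of $H_{B,s}^{k}$), and Lemma \ref{lem:measureable} (strong measurability of $\eta^{(t,\pm)}$ in $t$). The $T_{d}$ summand is handled analogously, where one may need an integration by parts in $\lambda$ to recast an $f^{(d)}$-spectral-shift representation as an $f^{(d+1)}$-representation; the resulting primitive is precisely what produces the extra weight $\langle X\rangle^{N+1}$ rather than $\langle X\rangle^{N}$.

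Summing the finitely many surviving contributions and Bochner-integrating over $t\in[0,1]$ (integrability ensured by the $(1-t)^{d}$ factor) yields a single $\Xi_{d,A_{0},B}\in\langle X\rangle^{N+1}L^{1}(\IR_{\ge 0})$ satisfying \eqref{eq:prop:leftspectralshift:5}. Since $D_{B}^{\ast}D_{B},D_{B}D_{B}^{\ast}\ge0$, Lemma \ref{lem:support} localises $\mathrm{supp}\,\Xi_{d,A_{0},B}\subseteq[0,\infty)$ and simultaneously gives uniqueness. The main obstacle I anticipate is the uniform-in-$s$ verification of the norm-continuity condition $\lim_{z\to\infty}\|\langle H_{B,s}\rangle^{k}V\langle H_{B,s}\rangle^{-k}\langle H_{B,s}\rangle_{z}^{-1}\|=0$ from Hypothesis \ref{hyp:mainhyp}, together with careful tracking of Schatten indices and weight powers through Lemma \ref{lem:centraltorighttrace} and the $t$-integration so that the extracted density lies in exactly $\langle X\rangle^{N+1}L^{1}$ with the correct norm bound.
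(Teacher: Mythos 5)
Your overall architecture coincides with the paper's: the splitting $D_{B}^{\ast}D_{B}=H_{B,0}-M_{\Ii c\nabla A}$, $D_{B}D_{B}^{\ast}=H_{B,0}+M_{\Ii c\nabla A}$, the order-$(d+1)$ Taylor expansion with cancellation of the even terms and Clifford annihilation of the odd terms of order $<d$, Lemma \ref{lem:centraltorighttrace} plus Theorem \ref{thm:spectralshiftfunctionexist} for the remainder difference, an integration by parts to pass from an $f^{(d)}$- to an $f^{(d+1)}$-representation (which is indeed what produces the weight $\langle X\rangle^{N+1}$), and Lemma \ref{lem:support} for support and uniqueness.

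There is, however, a genuine gap in your treatment of the surviving $k=d$ Taylor summand, which you dispose of as ``handled analogously'' to the remainder. For the remainder the operator count is $d+1$ factors of $V=2M_{\Ii c\nabla A}\in S^{N,d+\epsilon,\cdot}_{H_{B,s}}$, so $\sum_{j}p_{j}^{-1}=\frac{d+1}{d+\epsilon}>1$ and Corollary \ref{cor:multiopestimate}/Theorem \ref{thm:spectralshiftfunctionexist} apply. For the $k=d$ term there are only $d$ factors, giving $\frac{d}{d+\epsilon}<1$: the operator $\mathcal{J}(\dif_{d}f,H_{B,0},\mathbf{V}_{d})$ lies only in $S^{(d+\epsilon)/d}$ and is \emph{not} trace class in $L^{2}(\IR^{d},\IC^{r}\otimes H)$, and the hypothesis $T_{0}\in S^{m,p/(p-n),\alpha}_{A}$ of Theorem \ref{thm:spectralshiftfunctionexist} fails for $T_{0}=V$, $n=d-1$. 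This is precisely why the statement involves the partial trace $\tr_{\IC^{r}}$ and cannot be reduced to a full-space Krein-type argument. The missing ingredient is the second part of Lemma \ref{lem:clifford}: the trace of a length-$d$ Clifford word is the Levi-Civita symbol, so after passing to a polar frame and splitting $c\nabla A=c\nabla_{R}A+c\nabla_{\text{ang}}A$, only those terms survive in which exactly one of the $d$ factors is the radial derivative $M_{\Ii c\nabla_{R}A}$ — and that factor lies in the strictly smaller class $S^{N,d-\delta,\cdot}_{H_{B,s}}$ by the stronger radial decay in Hypothesis \ref{hyp:smoothhyp} (Lemma \ref{lem:hbproperty}). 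Only then is $\Tr\left(M_{\Ii c\nabla_{R}A}\mathcal{J}\left(\dif_{d-1}f',H_{B,0},M_{\Ii c\nabla_{\text{ang}}A}\right)\right)$ well defined, since $\frac{d-1}{d+\epsilon}+\frac{1}{d-\delta}\geq 1$ for $\epsilon$ small relative to $\delta$; the paper moreover needs the finite-rank and spectral cutoffs $K_{i}P_{j}$ to justify the cyclic rearrangements leading to this expression. Without this radial/angular decomposition your $k=d$ contribution is the trace of a non-trace-class operator and the argument breaks down at exactly the point the proposition is designed to overcome.
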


\begin{proof}
    The operator $H_{B,0}$ is non-negative self-adjoint on $\dom H_{0}$ by Corollary \ref{cor:uninormequiv}. Moreover on $\dom H_{0}$ we have
    \begin{align}
        D_{B}D_{B}^{\ast}=H_{B,0}+M_{\Ii c\nabla A},\ D_{B}^{\ast}D_{B}=H_{B,0}-M_{\Ii c\nabla A}.
    \end{align}
    By Proposition \ref{prop:divideddifference} we derive on $\dom H_{0}^{\frac{d}{2}}$ for $f\in\bigcap_{j=0}^{d}W^{0,j}$,
    \begin{align}
        &f\left(D_{B}^{\ast}D_{B}\right)-f\left(D_{B}D_{B}^{\ast}\right)\nonumber\\
        =&R_{d+1}\left(f,H_{B,0},-M_{\Ii c\nabla A}\right)+\sum_{k=0}^{d}\mathcal{J}\left(\dif_{k}f,H_{B,0},-M_{\Ii c\nabla A}\right)\nonumber\\
        &-R_{d+1}\left(f,H_{B,0},+M_{\Ii c\nabla A}\right)-\sum_{k=0}^{d}\mathcal{J}\left(\dif_{k}f,H_{B,0},+M_{\Ii c\nabla A}\right)\nonumber\\
        =&R_{d+1}\left(f,H_{B,0},-M_{\Ii c\nabla A}\right)-R_{d+1}\left(f,H_{B,0},+M_{\Ii c\nabla A}\right)\\
        &-2\sum_{k=0}^{\frac{d-1}{2}}\mathcal{J}\left(\dif_{2k+1}f,H_{B,0},+M_{\Ii c\nabla A}\right).
    \end{align}
    Clifford matrices commute with $H_{B,0}$ and thus Lemma \ref{lem:clifford} implies that
    \begin{align}
        \sum_{k=0}^{\frac{d-1}{2}}\tr_{\IC^{r}}\mathcal{J}\left(\dif_{2k+1}f,H_{B,0},+M_{\Ii c\nabla A}\right)=\tr_{\IC^{r}}\mathcal{J}\left(\dif_{d}f,H_{B,0},+M_{\Ii c\nabla A}\right).
    \end{align}
    We split $c\nabla A=c\nabla_{R}A+c\nabla_{\text{ang}}A$, and denote
    \begin{align}
        \mathbf{M}_{k}:=\left(M_{\Ii c\nabla_{\text{ang}} A},\ldots,M_{\Ii c\nabla_{\text{ang}} A},M_{\Ii c\nabla_{R} A},M_{\Ii c\nabla_{\text{ang}} A},\ldots,M_{\Ii c\nabla_{\text{ang}} A}\right),
    \end{align}
    with $M_{\Ii c\nabla_{R} A}$ in the $k$-th slot. By Lemma \ref{lem:clifford} we have a non-trivial trace only if exactly one factor $c\nabla_{R}A$ appears, and thus
    \begin{align}
        \tr_{\IC^{r}}\mathcal{J}\left(\dif_{d}f,H_{B,0},+M_{\Ii c\nabla A}\right)=\sum_{k=1}^{d}\tr_{\IC^{r}}\mathcal{J}\left(\dif_{d}f,H_{B,0},\mathbf{M}_{k}\right).
    \end{align}
    We obtain
    \begin{align}\label{eq:prop:leftspectralshift:1}
        &\tr_{\IC^{r}}\left(f\left(D_{B}^{\ast}D_{B}\right)-f\left(D_{B}D_{B}^{\ast}\right)\right)\nonumber\\
        =&\tr_{\IC^{r}}\left(R_{d+1}\left(f,H_{B,0},-M_{\Ii c\nabla A}\right)-R_{d+1}\left(f,H_{B,0},+M_{\Ii c\nabla A}\right)\right)\nonumber\\
        &-2\sum_{k=1}^{d}\tr_{\IC^{r}}\mathcal{J}\left(\dif_{d}f,H_{B,0},\mathbf{M}_{k}\right),
    \end{align}
    where all arguments of the trace $\tr_{\IC^{r}}$ on the right hand side are trace-class in $L^2\left(\IR^{d},\IC^{r}\otimes H\right)$ by Lemma \ref{lem:hbproperty}, and Proposition \ref{prop:divideddifference} for $f\in W^{m,d+1}\cap\bigcap_{j=0}^{d+1}W^{0,j}$. Let $(\psi_{l})_{l\in\IN}$ be an orthonormal basis of $L^2(\IR^{d},H)$, such that $v\otimes\psi_{l}\in\dom H_{B,0}^{k}$ for any $k,l\in\IN$ and $v\in\IC^{r}$. For $i,j\in\IN$ denote $P_{j}:=\one_{[-j,j]}(H_{B,0})$, $\widehat{K}_{i}:=\sum_{l=1}^{i}\langle\cdot,\psi_{l}\rangle \psi_{l}$, and $K_{i}:=\one_{\IC^{r}}\otimes\widehat{K}_{i}$. Let $\mathbf{M}_{k}^{i,j}$ be the operator vector obtained from applying $K_{i}P_{j}\cdot P_{j}K_{i}$ to each entry of $\mathbf{M}_{k}$. Then strong operator convergence of $K_{i}$, $P_{j}$ to $\one_{L^{2}(\IR^{d},\IC^{r}\otimes H)}$ as $i,j\to\infty$, and since $P_{j}$ commutes with $H_{B,0}$, the convergence improving property in Lemma \ref{lem:schattconv} show that the following iterated limit statement is valid.
    \begin{align}\label{eq:prop:leftspectralshift:2}
        &\sum_{k=1}^{d}\Tr_{L^2(\IR^{d},\IC^{r}\otimes H)}\mathcal{J}\left(\dif_{d}f,H_{B,0},\mathbf{M}_{k}\right)\nonumber\\
        =&\lim_{j\to\infty}\lim_{i\to\infty}\sum_{k=1}^{d}\Tr_{L^2(\IR^{d},\IC^{r}\otimes H)}\mathcal{J}\left(\dif_{d}f,H_{B,0},\mathbf{M}_{k}^{i,j}\right).
    \end{align}
    We apply Lemma \ref{lem:clifford} again, noting that $K_{i}$ and $P_{j}$ commute with any matrix in $\IC^{r\times r}$, and obtain
    \begin{align}
        &\sum_{k=1}^{d}\Tr_{L^2(\IR^{d},\IC^{r}\otimes H)}\mathcal{J}\left(\dif_{d}f,H_{B,0},\mathbf{M}_{k}^{i,j}\right)\nonumber\\
        =&\Tr_{L^2(\IR^{d},\IC^{r}\otimes H)}\mathcal{J}\left(\dif_{d}f,H_{B,0},K_{i}P_{j}M_{\Ii c\nabla A}P_{j}K_{i}\right)
    \end{align}
    We may apply Lemma \ref{lem:centraltorighttrace} and thus have
    \begin{align}
        &\Tr_{L^2(\IR^{d},\IC^{r}\otimes H)}\mathcal{J}\left(\dif_{d}f,H_{B,0},K_{i}P_{j}M_{\Ii c\nabla A}P_{j}K_{i}\right)\nonumber\\
        =&\frac{1}{d}\Tr_{L^2(\IR^{d},\IC^{r}\otimes H)}\left(K_{i}P_{j}M_{\Ii c\nabla A}P_{j}K_{i}\mathcal{J}\left(\dif_{d-1}f',H_{B,0},K_{i}P_{j}M_{\Ii c\nabla A}P_{j}K_{i}\right)\right).
    \end{align}
    We apply Lemma \ref{lem:clifford} after the split $c\nabla A=c\nabla_{R}A+c\nabla_{\text{ang}}A$, and commute under the trace to get
    \begin{align}
        &\frac{1}{d}\Tr_{L^2(\IR^{d},\IC^{r}\otimes H)}\left(K_{i}P_{j}M_{\Ii c\nabla A}P_{j}K_{i}\mathcal{J}\left(\dif_{d-1}f',H_{B,0},K_{i}P_{j}M_{\Ii c\nabla A}P_{j}K_{i}\right)\right)\nonumber\\
        =&\Tr_{L^2(\IR^{d},\IC^{r}\otimes H)}\left(K_{i}P_{j}M_{\Ii c\nabla_{R} A}P_{j}K_{i}\mathcal{J}\left(\dif_{d-1}f',H_{B,0},K_{i}P_{j}M_{\Ii c\nabla_{\text{ang}}A}P_{j}K_{i}\right)\right).
    \end{align}
    Now we first let $i\to\infty$ and then $j\to\infty$, and both limits may passed under the trace by the same reasons as before and we obtain in total
    \begin{align}
    &\sum_{k=1}^{d}\Tr_{L^2(\IR^{d},\IC^{r}\otimes H)}\mathcal{J}\left(\dif_{d}f,H_{B,0},\mathbf{M}_{k}\right)\nonumber\\
    =&\Tr_{L^2(\IR^{d},\IC^{r}\otimes H)}\left(M_{\Ii c\nabla_{R} A}\mathcal{J}\left(\dif_{d-1}f',H_{B,0},M_{\Ii c\nabla_{\text{ang}}A}\right)\right).
    \end{align}
    Returning to (\ref{eq:prop:leftspectralshift:1}) and applying Proposition \ref{prop:divideddifference} we have
    \begin{align}\label{eq:prop:leftspectralshift:3}
        &\Tr_{L^2(\IR^{d},H)}\tr_{\IC^{r}}\left(f\left(D_{B}^{\ast}D_{B}\right)-f\left(D_{B}D_{B}^{\ast}\right)\right)\nonumber\\
        =&(d+1)\int_{0}^{1}(1-t)^{d}\Tr_{L^2(\IR^{d},\IC^{r}\otimes H)}\left[\mathcal{J}\left(\dif_{d+1}f,H_{B,-\frac{t}{2}},-M_{\Ii c\nabla A}\right)\right.\nonumber\\
        &\left.-\mathcal{J}\left(\dif_{d+1}f,H_{B,\frac{t}{2}},M_{\Ii c\nabla A}\right)\right]\Id t\nonumber\\
        &-2\Tr_{L^2(\IR^{d},\IC^{r}\otimes H)}\left(M_{\Ii c\nabla_{R}A}\mathcal{J}\left(\dif_{d-1}f',H_{B,0},M_{\Ii c\nabla_{\text{ang}}A}\right)\right).
    \end{align}
    According to Theorem \ref{thm:spectralshiftfunctionexist}, Lemma \ref{lem:hbproperty} and Lemma \ref{lem:support} there exist unique functions $\eta_{1,t}:=\eta_{d+1,H_{B,-\frac{t}{2}},-M_{\Ii c\nabla A},-M_{\Ii c\nabla A}}$, $\eta_{2,t}:=\eta_{d+1,H_{B,\frac{t}{2}},M_{\Ii c\nabla A},M_{\Ii c\nabla A}}$ and $\eta_{3}:=\eta_{d-1,H_{B,0},M_{\Ii c\nabla_{R}A},M_{\Ii c\nabla_{\text{ang}}A}}$, all of which are elements in $\langle X\rangle^{N}L^{1}(\IR_{\geq 0})$, such that for $f\in W^{N,d+1}_{0}\cap\bigcap_{j=0}^{d+1}W^{0,j}_{0}$
    \begin{align}
        &\Tr_{L^2(\IR^{d},H)}\tr_{\IC^{r}}\left(f\left(D_{B}^{\ast}D_{B}\right)-f\left(D_{B}D_{B}^{\ast}\right)\right)\nonumber\\
        =&(d+1)\int_{0}^{1}(1-t)^{d}\int_{0}^{\infty}f^{(d+1)}(\lambda)(\eta_{1,t}(\lambda)-\eta_{2,t}(\lambda))\Id\lambda~\Id t\nonumber\\
        &+\int_{0}^{\infty}f^{(d)}(\lambda)\eta_{3}(\lambda)\Id\lambda.
    \end{align}
    The $\langle X\rangle^{N}L^{1}(\IR_{\geq 0})$-norm of the functions $\eta_{i,t}$ is uniform in $t\in[0,1]$ and depends measurably on $t$ by Lemma \ref{lem:measureable}. We thus define $\Xi_{d,A_{0},B}\in\langle X\rangle^{N+1}L^{1}(\IR_{\geq 0})$ via
    \begin{align}
        \Xi_{d,A_{0},B}(\lambda):=(d+1)\int_{0}^{1}(1-t)^{d}(\eta_{1,t}(\lambda)-\eta_{2,t}(\lambda))\Id t-\int_{0}^{\lambda}\eta_{3}(\mu)\Id\mu,~\lambda\in\IR,
    \end{align}
    which after integration by parts gives for $f\in C^{\infty}_{c}(\IR)$,
    \begin{align}\label{eq:prop:leftspectralshift:4}
        \Tr_{L^2(\IR^{d},H)}\tr_{\IC^{r}}\left(f\left(D_{B}^{\ast}D_{B}\right)-f\left(D_{B}D_{B}^{\ast}\right)\right)=\int_{0}^{\infty}f^{(d+1)}(\lambda)\Xi_{d,A_{0},B}(\lambda)\Id\lambda.
    \end{align}
    Density and continuity extends (\ref{eq:prop:leftspectralshift:4}) to $f\in W^{N,d+1}_{0}\cap\bigcap_{j=0}^{d+1}W^{0,j}_{0}$.
\end{proof}

We arrive at the principal result of this paper, which provides the connection of both spectral shift functions by a functional equation, which generalizes the result in \cite{Push} to higher dimensions. Moreover, we replace the spectral shift function $\Xi_{d,A_{0},B}$ by its derivative, which has the more convenient order $d$.

\begin{thm}\label{thm:pushfunctional}
    Assume Hypothesis \ref{hyp:smoothhyp}. Then the function $\Xi_{d,A_{0},B}$ from Proposition \ref{prop:leftspectralshift} is weakly differentiable with $\xi_{d,A_{0},B}:=-\Xi_{d,A_{0},B}'\in\langle X\rangle^{N+\frac{d}{2}}L^{1}(\IR_{\geq 0})$, such that for 
    $f\in W^{N,d}_{0}\cap W^{N,d+1}_{0}\cap\bigcap_{j=0}^{d+1}W^{0,j}_{0}$ with $f^{(d)}\in\langle X\rangle^{-N-\frac{d}{2}}L^{1}(\IR)$,
    \begin{align}
         \Tr_{L^2(\IR^{d},H)}\tr_{\IC^{r}}\left(f\left(D_{B}^{\ast}D_{B}\right)-f\left(D_{B}D_{B}^{\ast}\right)\right)=\int_{0}^{\infty}f^{(d)}(\lambda)\xi_{d,A_{0},B}(\lambda)\Id\lambda.
    \end{align}
    The function $\xi_{d,A_{0},B}$ satisfies the functional equation
    \begin{align}
        \xi_{d,A_{0},B}(\lambda)=-\frac{\left(\frac{d-1}{2}\right)!}{\pi^{\frac{d+1}{2}}(d-1)!}\int_{0}^{\lambda}(\lambda-\mu)^{\frac{d}{2}-1}\eta_{d,A_{0},B}(\mu)\Id\mu,~\lambda>0,
    \end{align}
    where $\eta_{d,A_{0},B}$ is the function obtained from Proposition \ref{prop:spectralcallias}.
\end{thm}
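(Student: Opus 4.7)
The plan is to match the principal trace formula Theorem~\ref{thm:principaltrace} against the two spectral shift representations, Proposition~\ref{prop:leftspectralshift} on the left and Proposition~\ref{prop:spectralcallias} on the right, by evaluating on the semi-group family $f_{t}(x):=e^{-tx}$ for $t>0$ (or any admissible cut-off agreeing with $f_{t}$ on $[0,\infty)$, since both sides only probe the non-negative spectrum and $\Xi_{d,A_{0},B}$ is supported in $\IR_{\geq 0}$). The goal is to convert the trace identity into a Laplace transform identity between $\Xi_{d,A_{0},B}$ and $\eta_{d,A_{0},B}$, and then to invert.

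First I would reinterpret the simplex integrand of Theorem~\ref{thm:principaltrace} through Genochi-Hermite (Lemma~\ref{lem:genochihermite}). Applied to $f_{t}'$ it gives $\dif_{d-1}(f_{t}')(\lambda)=(-t)^{d}\int_{\Delta_{d-1}}e^{-t\langle s,\lambda\rangle}\,\Id s$, so cyclic rearrangement under the trace yields
\begin{align*}
\int_{\Delta_{d-1}}\Tr\Bigl(\prod_{j=0}^{d-1}(\Ii c\nabla A_{\phi})(x)e^{-ts_{j}A_{\phi}^{2}(x)}\Bigr)\,\Id s
= \frac{1}{(-t)^{d}}\Tr\bigl(\Ii c\nabla A_{\phi}(x)\,\mathcal{J}(\dif_{d-1}f_{t}',A_{\phi}^{2}(x),\mathbf{\Ii c\nabla A_{\phi}(x)})\bigr).
\end{align*}
Integrating in $x$ and plugging into Proposition~\ref{prop:spectralcallias} collapses the right hand side of Theorem~\ref{thm:principaltrace} to $2(4\pi)^{-d/2}t^{d/2}\mathcal{L}\eta_{d,A_{0},B}(t)$, while Proposition~\ref{prop:leftspectralshift} applied to $f_{t}$ rewrites the left hand side as $t^{d+1}\mathcal{L}\Xi_{d,A_{0},B}(t)$ (using $d+1$ even so $(-t)^{d+1}=t^{d+1}$). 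Equating for all $t>0$, recognizing $t^{-d/2-1}=\Gamma(d/2+1)^{-1}\mathcal{L}(\lambda\mapsto\lambda^{d/2}\one_{\lambda\geq 0})(t)$, and invoking the convolution theorem together with uniqueness of Laplace transforms on weighted $L^{1}$ supported in $[0,\infty)$, I obtain a.e.
\begin{align*}
\Xi_{d,A_{0},B}(\lambda)=\frac{2(4\pi)^{-d/2}}{\Gamma(d/2+1)}\int_{0}^{\lambda}(\lambda-\mu)^{d/2}\eta_{d,A_{0},B}(\mu)\,\Id\mu.
\end{align*}

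Because $d\geq 3$ the kernel $(\lambda-\mu)^{d/2}$ is $C^{1}$ in $\lambda$, so this formula exhibits $\Xi_{d,A_{0},B}$ as classically differentiable with $\Xi_{d,A_{0},B}(0)=0$, and
\begin{align*}
\xi_{d,A_{0},B}(\lambda)=-\Xi_{d,A_{0},B}'(\lambda)=-\frac{2(4\pi)^{-d/2}}{\Gamma(d/2)}\int_{0}^{\lambda}(\lambda-\mu)^{d/2-1}\eta_{d,A_{0},B}(\mu)\,\Id\mu
\end{align*}
after using $\Gamma(d/2+1)=(d/2)\Gamma(d/2)$. The half-integer identity $\Gamma(d/2)=\sqrt{\pi}(d-1)!/(2^{d-1}((d-1)/2)!)$ turns the prefactor into the asserted $-((d-1)/2)!/(\pi^{(d+1)/2}(d-1)!)$. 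A Fubini estimate, splitting the inner $\lambda$-integral at $\mu$, then gives $\xi_{d,A_{0},B}\in\langle X\rangle^{N+d/2}L^{1}(\IR_{\geq 0})$ from the assumed membership of $\eta_{d,A_{0},B}$ in $\langle X\rangle^{N}L^{1}(\IR_{\geq 0})$. The trace identity for general $f$ with $f^{(d)}\in\langle X\rangle^{-N-d/2}L^{1}(\IR)$ finally follows from Proposition~\ref{prop:leftspectralshift} by integration by parts, $\int f^{(d+1)}\Xi_{d,A_{0},B}=-\int f^{(d)}\Xi_{d,A_{0},B}'=\int f^{(d)}\xi_{d,A_{0},B}$, with vanishing boundary terms ensured by the decay hypothesis together with $\Xi_{d,A_{0},B}(0)=0$.

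The main technical hurdle is the Laplace inversion step: one must verify that $\Xi_{d,A_{0},B}$ and the proposed Abel-type convolution both belong to a function class on which the Laplace transform is injective. This holds because both live in weighted $L^{1}$-spaces on $[0,\infty)$ whose Laplace transforms are holomorphic on a common right half-plane and determined by their boundary values on $(0,\infty)$; everything else reduces to elementary gamma-function manipulation and the Fubini weight bound.
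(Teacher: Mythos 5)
Your proposal is correct and follows essentially the same route as the paper: evaluate both sides of the principal trace formula on (a Schwartz extension of) $e^{-tx}$, obtain the Laplace-transform identity $\mathcal{L}\Xi(t)=2(4\pi)^{-d/2}t^{-d/2-1}\mathcal{L}\eta(t)$, invert via the convolution theorem, and differentiate the resulting Abel-type kernel. The only cosmetic differences are that the paper justifies Laplace injectivity by Stone--Weierstra{\ss} density of polynomials in $e^{-\lambda}$ rather than by holomorphy/uniqueness on a half-plane, and establishes the differentiability of $\Xi_{d,A_{0},B}$ weakly by integrating against test functions rather than classically.
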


\begin{proof}
    Let $t>0$, and let $f_{t}$ be a Schwartz function on $\IR$ such that $f_{t}(x)=e^{-tx}$ for $x\geq 0$. In particular $f_{t}\in W^{N,d}_{0}\cap W^{N,d+1}_{0}\cap\bigcap_{j=0}^{d+1}W^{0,j}_{j}$. Then Theorem \ref{thm:principaltrace} translates to
    \begin{align}
        &\Tr_{L^2(\IR^{d},H)}\tr_{\IC^{r}}\left(f_{t}\left(D_{B}^{\ast}D_{B}\right)-f_{t}\left(D_{B}D_{B}^{\ast}\right)\right)\nonumber\\
        =&-\frac{2}{d}(4\pi)^{-\frac{d}{2}}t^{-\frac{d}{2}}\int_{\IR^{d}}\Tr_{\IC^{r}\otimes H}\left(\Ii c\nabla A_{\phi}(x)\mathcal{J}\left(\dif_{d-1}f_{t}',A_{\phi}^{2}(x),\mathbf{\Ii c\nabla A_{\phi}(x)}\right)\right)\Id x,
    \end{align}
    Both sides can be re-expressed through the spectral shift functions given in Proposition \ref{prop:spectralcallias} and Proposition \ref{prop:leftspectralshift} respectively:
    \begin{align}
        \int_{0}^{\infty}f_{t}^{(d+1)}(\lambda)\Xi_{d,A_{0},B}(\lambda)\Id\lambda=-2(4\pi)^{-\frac{d}{2}}t^{-\frac{d}{2}}\int_{0}^{\infty}f_{t}^{(d)}(\mu)\eta_{d,A_{0},B}(\mu)\Id\mu,
    \end{align}
        which gives with $\Xi:=\Xi_{d,A_{0},B}$, and $\eta:=\eta_{d,A_{0},B}$,
    \begin{align}
        &\int_{0}^{\infty}e^{-t\lambda}\Xi(\lambda)\Id\lambda=2(4\pi)^{-\frac{d}{2}}t^{-\frac{d}{2}-1}\int_{0}^{\infty}e^{-t\mu}\eta(\mu)\Id\mu\nonumber\\
        =&2(4\pi)^{-\frac{d}{2}}\Gamma\left(\frac{d}{2}+1\right)^{-1}\int_{0}^{\infty}\int_{0}^{\infty}e^{-t(s+\mu)}s^{\frac{d}{2}}\eta(\mu)\Id\mu~\Id s\nonumber\\
        =&\frac{2}{d}\pi^{-\frac{d+1}{2}}\frac{\left(\frac{d-1}{2}\right)!}{(d-1)!}\int_{0}^{\infty}e^{-t\lambda}\int_{0}^{\lambda}(\lambda-\mu)^{\frac{d}{2}}\eta(\mu)\Id\mu~\Id\lambda.
    \end{align}
    Denote for $\lambda\geq0$
    \begin{align}
        \psi(\lambda):=\Xi(\lambda)-\frac{2}{d}\pi^{-\frac{d+1}{2}}\frac{\left(\frac{d-1}{2}\right)!}{(d-1)!}\int_{0}^{\lambda}(\lambda-\mu)^{\frac{d}{2}}\eta(\mu)\Id\mu.
    \end{align}
    We claim that $\psi=0$ a.e. on $\IR_{\geq 0}$. To that end, we first note that for all $\epsilon>0$ we have $\lambda\mapsto e^{-s\lambda}\psi(\lambda)\in L^{1}(\IR_{\geq 0})$. The Stone-Weierstra{\ss} theorem implies that polynomials in $e^{-\lambda}$ are dense in $C_{0}(\IR_{\geq 0})$, and therefore we have for all $\phi\in C_{0}(\IR_{\geq 0})$ that
    \begin{align}
        \int_{0}^{\infty}\phi(\lambda)e^{-\epsilon\lambda}\psi(\lambda)\Id\lambda=0,
    \end{align}
    which implies that $\psi=0$ a.e. by the fundamental lemma of variational calculus. Thus, for a.e. $\lambda\geq 0$,
    \begin{align}
        \Xi(\lambda)=\frac{2}{d}\pi^{-\frac{d+1}{2}}\frac{\left(\frac{d-1}{2}\right)!}{(d-1)!}\int_{0}^{\lambda}(\lambda-\mu)^{\frac{d}{2}}\eta(\mu)\Id\mu,
    \end{align}
    where the right hand side is a weakly differentiable representative. Indeed let 
    \begin{align}
        \xi_{d,A_{0},B}(\lambda):=-\pi^{-\frac{d+1}{2}}\frac{\left(\frac{d-1}{2}\right)!}{(d-1)!}\int_{0}^{\lambda}(\lambda-\mu)^{\frac{d}{2}-1}\eta(\mu)\Id\mu,
    \end{align}
    which is an element in $\langle X\rangle^{N+\frac{d}{2}}L^{1}(\IR_{\geq 0})$, since $\eta\in\langle X\rangle^{N}L^{1}(\IR_{\geq 0})$. For $f\in C_{c}^{\infty}(\IR_{\geq 0})$,
    \begin{align}
        &\int_{0}^{\infty}f'(\lambda)\int_{0}^{\lambda}(\lambda-\mu)^{\frac{d}{2}}\eta(\mu)\Id\mu~\Id\lambda=\int_{0}^{\infty}\eta(\mu)\int_{\mu}^{\infty}f'(\lambda)(\lambda-\mu)^{\frac{d}{2}}\Id\lambda~\Id\mu\nonumber\\
        =&-\frac{d}{2}\int_{0}^{\infty}\eta(\mu)\int_{\mu}^{\infty}f(\lambda)(\lambda-\mu)^{\frac{d}{2}-1}\Id\lambda~\Id\mu\nonumber\\
        =&-\frac{d}{2}\int_{0}^{\infty}f(\lambda)\int_{0}^{\lambda}(\lambda-\mu)^{\frac{d}{2}-1}\eta(\mu)\Id\mu~\Id\lambda.
    \end{align}
    In particular
    \begin{align}
        \xi_{d,A_{0},B}(\lambda):=-\Xi'(\lambda),
    \end{align}
    and consequently for $f\in W^{N,d}_{0}\cap W^{N,d+1}_{0}\cap\bigcap_{j=0}^{d+1}W^{0,j}_{j}$ with\\
    $f^{(d)}\in \langle X\rangle^{-N-\frac{d}{2}}L^{1}(\IR_{\geq 0})$,
    \begin{align}
        \int_{0}^{\infty}f^{(d+1)}(\lambda)\Xi_{d,A_{0},B}(\lambda)\Id\lambda=\int_{0}^{\infty}f^{(d)}(\lambda)\xi_{d,A_{0},B}(\lambda)\Id\lambda.
    \end{align}
\end{proof}

We close this chapter with a result which shows that $\xi_{d,A_{0},B}$ is in fact more regular, which is an immediate consequence of the functional equation in Theorem \ref{thm:pushfunctional}.

\begin{cor}\label{cor:spectralshiftregular}
    Assume Hypothesis \ref{hyp:smoothhyp}, and let $\xi_{d,A_{0},B}$ be as in Theorem \ref{thm:pushfunctional}. Then $\xi_{d,A_{0},B}$ is $k=\frac{d-1}{2}$-times weakly differentiable with\\
    $\xi_{d,A_{0},B}^{(k)}\in\langle X\rangle^{N+\frac{1}{2}}L^1(\IR_{\geq 0})$, and for a.e. $\lambda>0$,
    \begin{align}\label{eq:cor:spectralshiftregular:1}
        \xi_{d,A_{0},B}^{(k)}(\lambda)=-\frac{1}{\pi}(4\pi)^{-k}\int_{0}^{\lambda}(\lambda-\mu)^{-\frac{1}{2}}\eta_{d,A_{0},B}(\mu)\Id\mu.
    \end{align}
\end{cor}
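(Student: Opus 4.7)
The plan is to iteratively compute weak derivatives of the integral representation of $\xi_{d,A_{0},B}$ provided by Theorem \ref{thm:pushfunctional}, and then to verify the claimed weighted $L^{1}$-integrability of the resulting $k$-th derivative.

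Setting $k:=\frac{d-1}{2}$ and $c_{d}:=\frac{k!}{\pi^{k+1}(2k)!}$, Theorem \ref{thm:pushfunctional} reads
\begin{align}
    \xi_{d,A_{0},B}(\lambda)=-c_{d}\int_{0}^{\lambda}(\lambda-\mu)^{k-\frac{1}{2}}\eta_{d,A_{0},B}(\mu)\Id\mu,\quad\lambda>0,
\end{align}
so $\xi_{d,A_{0},B}$ is a convolution on $\IR_{\geq 0}$ of $\eta_{d,A_{0},B}$ against the kernel $\lambda\mapsto\lambda^{k-\frac{1}{2}}$ (extended by zero to negative arguments). First, I would argue by induction on $j\in\{0,\ldots,k-1\}$: since the exponent $k-j-\tfrac{1}{2}\geq\tfrac{1}{2}$ remains strictly positive, the boundary term at $\mu=\lambda$ vanishes and integration by parts against an arbitrary test function $\phi\in C_{c}^{\infty}(\IR_{>0})$ yields, in the weak sense,
\begin{align}
    \xi_{d,A_{0},B}^{(j+1)}(\lambda)=-c_{d}\prod_{i=0}^{j}\left(k-i-\tfrac{1}{2}\right)\int_{0}^{\lambda}(\lambda-\mu)^{k-j-\frac{3}{2}}\eta_{d,A_{0},B}(\mu)\Id\mu.
\end{align}
At $j=k-1$ the resulting kernel $(\lambda-\mu)^{-\frac{1}{2}}$ is still locally integrable on $[0,\lambda]$, so the identity is well-defined, and the product of constants simplifies via $(2k-1)!!=\frac{(2k)!}{2^{k}k!}$ to
\begin{align}
    c_{d}\prod_{i=0}^{k-1}\left(k-i-\tfrac{1}{2}\right)=\frac{k!}{\pi^{k+1}(2k)!}\cdot\frac{(2k)!}{4^{k}k!}=\frac{1}{\pi(4\pi)^{k}},
\end{align}
which gives the asserted identity (\ref{eq:cor:spectralshiftregular:1}).

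It remains to check the weighted $L^{1}$-bound. By Fubini,
\begin{align}
    \int_{0}^{\infty}\langle\lambda\rangle^{-N-\frac{1}{2}}\left|\xi_{d,A_{0},B}^{(k)}(\lambda)\right|\Id\lambda\leq c\int_{0}^{\infty}|\eta_{d,A_{0},B}(\mu)|\int_{\mu}^{\infty}\langle\lambda\rangle^{-N-\frac{1}{2}}(\lambda-\mu)^{-\frac{1}{2}}\Id\lambda\,\Id\mu.
\end{align}
Substituting $\lambda=\mu+t^{2}$ turns the inner integral into $2\int_{0}^{\infty}\langle\mu+t^{2}\rangle^{-N-\frac{1}{2}}\Id t$. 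I would split this integral at $t=\sqrt{\langle\mu\rangle}$: on the short part the estimate $\langle\mu+t^{2}\rangle\geq\langle\mu\rangle$ gives a bound of order $\langle\mu\rangle^{-N-\frac{1}{2}}\sqrt{\langle\mu\rangle}=\langle\mu\rangle^{-N}$, while on the tail $\langle\mu+t^{2}\rangle\geq t^{2}$ together with $N\geq 1$ (enforced by Hypothesis \ref{hyp:calliashyp}) gives $\int_{\sqrt{\langle\mu\rangle}}^{\infty}t^{-2N-1}\Id t\lesssim\langle\mu\rangle^{-N}$. Since $\eta_{d,A_{0},B}\in\langle X\rangle^{N}L^{1}(\IR_{\geq 0})$ by Proposition \ref{prop:spectralcallias}, the estimate closes. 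The calculation is essentially routine; the only points requiring care are the bookkeeping of the combinatorial constant and the tail estimate of the weighted kernel, both of which are sketched above.
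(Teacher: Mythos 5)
Your proposal is correct and follows essentially the same route as the paper: both express $\xi_{d,A_{0},B}$ as the convolution of $\eta_{d,A_{0},B}$ with the one-sided power kernel $\lambda\mapsto\one_{\lambda>0}\lambda^{k-\frac{1}{2}}$ and differentiate that kernel $k$ times weakly, the constant collapsing to $\frac{1}{\pi}(4\pi)^{-k}$ exactly as you compute. The paper states the differentiability and the weighted $L^{1}$-membership as immediate consequences of the convolution structure, whereas you supply the iterated integration by parts and the Fubini-plus-splitting tail estimate explicitly; these details are correct and consistent with the paper's (terser) argument.
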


\begin{proof}
Let $\eta=\eta_{d,A_{0},B}$ on $\IR_{\geq 0}$ and $\eta\equiv 0$ on $\IR_{<0}$, $\xi=\xi_{d,A_{0},B}$ on $\IR_{\geq 0}$ and $\xi\equiv 0$ on $\IR_{<0}$. Finally denote by $X_{+}^{s}(x):=\one_{x>0}x^{s}$, for $x,s\in\IR$. By Theorem \ref{thm:pushfunctional} we may express $\xi$ via the convolution
\begin{align}
    \xi=-\frac{\left(\frac{d-1}{2}\right)!}{\pi^{\frac{d+1}{2}}(d-1)!}X_{+}^{k+\frac{1}{2}}\ast\eta,
\end{align}
which ensures that $\xi_{d,A_{0},B}$ is $k$-times weakly differentiable satisfying (\ref{eq:cor:spectralshiftregular:1}), where the right hand side integral of (\ref{eq:cor:spectralshiftregular:1}) converges for a.e. $\lambda>0$ (convolution integral). (\ref{eq:cor:spectralshiftregular:1}) also implies that $\xi_{d,A_{0},B}^{(k)}\in\langle X\rangle^{N+\frac{1}{2}}L^1(\IR_{\geq 0})$.
\end{proof}

\section{Regularized index}

In the last chapter we established the functional equation in Theorem \ref{thm:pushfunctional} for the spectral shift functions $\eta_{d,A_{0},B}$, and $\xi_{d,A_{0},B}$, which are constructed in Proposition \ref{prop:spectralcallias} and Proposition \ref{prop:leftspectralshift} respectively. Such a functional equation was shown first by A. Pushnitski in \cite{Push} in the one-dimensional case $d=1$. Following the functional equation the author is then able to prove an extension of the "Index=Spectral Flow" theorem. In this chapter we will use a similar approach to give an extension of the Callias' index theorem. The key to this approach is the regular behaviour of the spectral shift functions near zero.

More precisely we showed in Corollary \ref{cor:spectralshiftregular} that the functional equation implies that $\xi_{d,A_{0},B}$ is in general $\frac{d-1}{2}$-times differentiable. In this chapter we investigate the situation in which $\xi_{d,A_{0},B}$ is even more regular, at least from the right at $0$. We will show that such a condition gives rise to a notion of regularized index, which corresponds to the Witten index in the one-dimensional case. The idea of using spectral shift to generalize the Fredholm index to non-Fredholm situations has a tradition which goes back to \cite{GesSim}. Our first result in this chapter is a minor modification of \cite{GesSim}[Theorem 2.4] and a result in \cite{CGLS}.

\begin{prop}\label{prop:heatwitten}
    Assume Hypothesis \ref{hyp:smoothhyp}. Assume that the spectral shift function $\xi_{d,A_{0},B}$ is $(d-1)$-times weakly differentiable with $\xi_{d,A_{0},B}^{(d-1)}\in e^{tX}L^{1}(\IR)$ for some $t>0$. Furthermore assume that $\xi_{d,A_{0},B}^{(d-1)}$ has a right Lebesgue point at $0$. Then the following limit exists
    \begin{align}
        \lim_{t\to\infty}\Tr_{L^2(\IR^{d},H)}\tr_{\IC^{r}}\left(e^{-tD_{B}^{\ast}D_{B}}-e^{-tD_{B}D_{B}^{\ast}}\right)=-\xi_{d,A_{0},B}^{(d-1)}(0+).
    \end{align}
\end{prop}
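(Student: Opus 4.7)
The plan is to apply the spectral shift representation of Theorem \ref{thm:pushfunctional} to the heat semigroups, perform $(d-1)$ successive integrations by parts to peel $\xi_{d,A_{0},B}$ down to its $(d-1)$-st derivative, and then invoke a classical Abelian theorem for the Laplace transform at the right Lebesgue point $0$.

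First, I would choose a Schwartz extension $f_{t}$ of $x\mapsto e^{-tx}|_{[0,\infty)}$, which is admissible in Theorem \ref{thm:pushfunctional} (as already exploited in its proof). Since $d$ is odd, $f_{t}^{(d)}(\lambda) = -t^{d}e^{-t\lambda}$ on $[0,\infty)$, and Theorem \ref{thm:pushfunctional} gives
\begin{align*}
    \Tr_{L^{2}(\IR^{d},H)}\tr_{\IC^{r}}\left(e^{-tD_{B}^{\ast}D_{B}}-e^{-tD_{B}D_{B}^{\ast}}\right) = -t^{d}\int_{0}^{\infty}e^{-t\lambda}\xi_{d,A_{0},B}(\lambda)\,\Id\lambda.
\end{align*}

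Next, I would integrate by parts $(d-1)$ times. The $+\infty$ boundary terms are killed by $e^{-t\lambda}$: each $\xi_{d,A_{0},B}^{(j)}$ with $0\leq j\leq d-2$ has at most polynomial growth, obtained by successive antidifferentiation of $\xi_{d,A_{0},B}^{(d-1)}\in L^{1}(\IR_{\geq 0})$. The main obstacle is to show that the boundary term at $0$ vanishes at every intermediate order, $\xi_{d,A_{0},B}^{(j)}(0+)=0$ for $0\leq j\leq d-2$---otherwise a divergent $t^{d-j-1}$ contribution would appear after multiplication by $-t^{d}$. The key observation is that $\xi_{d,A_{0},B}$ is supported on $\IR_{\geq 0}$ (Lemma \ref{lem:support}), and, since $\xi_{d,A_{0},B}$ is assumed $(d-1)$-times weakly differentiable on $\IR$ with $\xi_{d,A_{0},B}^{(d-1)}$ a genuine $L^{1}_{\operatorname{loc}}$-function (not a distribution with a Dirac-delta component at $0$), a downward induction shows that each $\xi_{d,A_{0},B}^{(j)}$ for $0\leq j\leq d-2$ is absolutely continuous on $\IR$, vanishes identically on $(-\infty,0)$, and therefore $\xi_{d,A_{0},B}^{(j)}(0+)=\xi_{d,A_{0},B}^{(j)}(0-)=0$. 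After the $(d-1)$ integrations by parts one is therefore left with
\begin{align*}
    \Tr_{L^{2}(\IR^{d},H)}\tr_{\IC^{r}}\left(e^{-tD_{B}^{\ast}D_{B}}-e^{-tD_{B}D_{B}^{\ast}}\right) = -t\int_{0}^{\infty}e^{-t\lambda}\xi_{d,A_{0},B}^{(d-1)}(\lambda)\,\Id\lambda.
\end{align*}

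Finally, a classical Abelian theorem for the Laplace transform (applied to $\xi_{d,A_{0},B}^{(d-1)}$, which has a right Lebesgue point at $0$ and exponential decay at $+\infty$) yields the limit. Using $t\int_{0}^{\infty}e^{-t\lambda}\,\Id\lambda = 1$, the remainder
\begin{align*}
    -t\int_{0}^{\infty}e^{-t\lambda}\bigl(\xi_{d,A_{0},B}^{(d-1)}(\lambda) - \xi_{d,A_{0},B}^{(d-1)}(0+)\bigr)\,\Id\lambda
\end{align*}
tends to $0$ as $t\to\infty$, by splitting into $[0,\delta]$ (handled via integration by parts against the antiderivative of $|\xi_{d,A_{0},B}^{(d-1)}(\cdot)-\xi_{d,A_{0},B}^{(d-1)}(0+)|$ and estimated through the right Lebesgue point property) and $[\delta,\infty)$ (controlled exponentially by the $e^{tX}L^{1}$-assumption for $t$ sufficiently large). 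This yields the claimed limit $-\xi_{d,A_{0},B}^{(d-1)}(0+)$ and completes the proof.
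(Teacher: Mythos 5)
Your proposal is correct and follows essentially the same route as the paper: both apply Theorem \ref{thm:pushfunctional} to a Schwartz extension of $e^{-tx}$, integrate by parts $(d-1)$ times down to $\xi_{d,A_{0},B}^{(d-1)}$, and conclude with an Abelian argument at the right Lebesgue point (the paper via the substitution $\nu=t\lambda$ and dominated convergence of the averages $\frac{t}{\nu}\int_{0}^{\nu/t}\xi^{(d-1)}$, you via splitting the integral at $\delta$). Your explicit justification that the boundary terms $\xi_{d,A_{0},B}^{(j)}(0+)$, $0\leq j\leq d-2$, vanish—using the support on $\IR_{\geq 0}$ and absolute continuity—is a detail the paper passes over silently, and is a welcome addition.
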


\begin{proof}
    Let $t>0$, and set $\xi:=\xi_{d,A_{0},B}$. Then
    \begin{align}
        &\Tr_{L^2(\IR^{d},H)}\tr_{\IC^{r}}\left(e^{-tD_{B}^{\ast}D_{B}}-e^{-tD_{B}D_{B}^{\ast}}\right)=\int_{0}^{\infty}(-t)^{d}e^{-t\lambda}\xi(\lambda)\Id\lambda\nonumber\\
        =&\int_{0}^{\infty}(-t)e^{-t\lambda}\xi^{(d-1)}(\lambda)\Id\lambda=-t^{2}\int_{0}^{\infty}e^{-t\lambda}\int_{0}^{\lambda}\xi^{(d-1)}(\mu)\Id\mu~\Id\lambda\nonumber\\
        =&-\int_{0}^{\infty}\nu e^{-\nu}\left[\frac{t}{\nu}\int_{0}^{\frac{\nu}{t}}\xi^{(d-1)}(\mu)\Id\mu\right]\Id\nu.
    \end{align}    
    The expression in $[\cdot]$-brackets converges for $\nu>0$ to $\xi^{(d-1)}(0+)$ a $t\to\infty$, by the definition of a right Lebesgue point. Since $\int_{0}^{\infty}\nu e^{-\nu}\Id\nu=1$, we may assume without loss in the following that $\xi_{d,A_{0},B}^{(d-1)}(0+)=0$.
    
    In particular, there are constants $\epsilon>0$ and $C>0$, such that for $\nu<\epsilon t$,
    \begin{align}
        \left|\frac{t}{\nu}\int_{0}^{\frac{\nu}{t}}\xi^{(d-1)}(\mu)\Id\mu\right|\leq C.
    \end{align}
    On the other hand there is $t_{0}$, such that for $t\geq t_{0}$,
    \begin{align}
        \left|\frac{t}{\nu}\int_{0}^{\frac{\nu}{t}}\xi^{(d-1)}(\mu)\Id\mu\right|\leq Ce^{\frac{\nu}{2}}.
    \end{align}
    So there is a constant $C'$, such that for $t\geq t_{0}$, $\nu>0$,
    \begin{align}
        \nu e^{-t\nu}\left|\frac{t}{\nu}\int_{0}^{\frac{\nu}{t}}\xi^{(d-1)}(\mu)\Id\mu\right|\leq C\nu e^{-\nu}\one_{\nu<\epsilon t}+C\epsilon^{-1}\nu e^{-\frac{\nu}{2}}\one_{\nu\geq\epsilon t}\leq C'\nu e^{-\frac{\nu}{2}}.
    \end{align}
    The claim then follows by application of the dominated convergence theorem as $t\to\infty$.
\end{proof}

In view of the definition of semi-group regularized Witten index in \cite{GesSim}, and the proposition above, we suggest the following definition of partial Witten index.

\begin{defn}\label{defn:partialwitten}
    Assume Hypothesis \ref{hyp:smoothhyp}. Assume that the spectral shift function $\xi_{d,A_{0},B}$ is $(d-1)$-times weakly differentiable with $\xi_{d,A_{0},B}^{(d-1)}\in e^{tX}L^{1}(\IR)$ for some $t>0$. Furthermore assume that $\xi_{d,A_{0},B}^{(d-1)}$ admits a right Lebesgue point at $0$. Then define the \textit{partial Witten index}
    \begin{align}
        \ind_{W}D_{B}:=-\xi_{d,A_{0},B}^{(d-1)}(0+).
    \end{align}
\end{defn}

The next Lemma justifies the word "index" for the quantity $\ind_W D_{B}$, because in the Fredholm case it agrees with the Fredholm index of $D_{B}$.

\begin{lem}\label{lem:fredholmwitten}
    Assume Hypothesis \ref{hyp:smoothhyp}, and assume that $D_{B}$ is Fredholm with index $\ind D_{B}$. Then $D_{B}$ admits a partial Witten index and
    \begin{align}
        \ind_{W}D_{B}=\ind D_{B}.
    \end{align}
\end{lem}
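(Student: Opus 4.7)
The plan is to combine the classical McKean--Singer cancellation with the spectral shift representation from Theorem~\ref{thm:pushfunctional} to identify $\xi_{d,A_{0},B}$ explicitly as a polynomial on $\IR_{\geq 0}$, from which both the regularity conditions of Definition~\ref{defn:partialwitten} and the value at zero follow at once.

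First I would argue that in the Fredholm case the polar decomposition $D_{B}=U|D_{B}|$ induces a unitary equivalence between $D_{B}^{\ast}D_{B}|_{(\ker D_{B})^{\perp}}$ and $D_{B}D_{B}^{\ast}|_{(\ker D_{B}^{\ast})^{\perp}}$, so that the non-zero spectral contributions to $e^{-tD_{B}^{\ast}D_{B}}-e^{-tD_{B}D_{B}^{\ast}}$ cancel in trace. Since the partial trace of this difference is already known to be finite by Theorem~\ref{thm:principaltrace}, one obtains the McKean--Singer identity
\begin{align*}
\Tr_{L^{2}(\IR^{d},H)}\tr_{\IC^{r}}\bigl(e^{-tD_{B}^{\ast}D_{B}}-e^{-tD_{B}D_{B}^{\ast}}\bigr)=\dim\ker D_{B}-\dim\ker D_{B}^{\ast}=\ind D_{B}
\end{align*}
for every $t>0$. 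Picking a Schwartz extension $f_{t}$ of $x\mapsto e^{-tx}$ on $\IR_{\geq 0}$ as in the proof of Theorem~\ref{thm:pushfunctional} and applying that theorem yields
\begin{align*}
\ind D_{B}=\int_{0}^{\infty}f_{t}^{(d)}(\lambda)\,\xi_{d,A_{0},B}(\lambda)\,\Id\lambda=-t^{d}\int_{0}^{\infty}e^{-t\lambda}\xi_{d,A_{0},B}(\lambda)\,\Id\lambda,
\end{align*}
where $(-1)^{d}=-1$ since $d$ is odd. Hence $\mathcal{L}[\xi_{d,A_{0},B}](t)=-\ind D_{B}\,t^{-d}$ for all $t>0$.

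Since $1/t^{d}=\mathcal{L}[\lambda^{d-1}/(d-1)!](t)$, uniqueness of the Laplace transform on the class of locally integrable, polynomially bounded functions on $\IR_{\geq 0}$ (to which $\xi_{d,A_{0},B}\in\langle X\rangle^{N+\frac{d}{2}}L^{1}$ belongs) forces
\begin{align*}
\xi_{d,A_{0},B}(\lambda)=-\frac{\ind D_{B}}{(d-1)!}\lambda^{d-1}\qquad\text{for a.e. }\lambda>0.
\end{align*}
Being a polynomial of degree $d-1$, $\xi_{d,A_{0},B}$ is smooth on $\IR_{\geq 0}$ and $\xi_{d,A_{0},B}^{(d-1)}\equiv -\ind D_{B}$, which trivially lies in $e^{tX}L^{1}(\IR)$ for every $t>0$ and admits $0$ as a right Lebesgue point. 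All conditions of Definition~\ref{defn:partialwitten} are satisfied, giving $\ind_{W}D_{B}=-\xi_{d,A_{0},B}^{(d-1)}(0+)=\ind D_{B}$.

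The main obstacle I anticipate is making the McKean--Singer cancellation rigorous in the partial-trace setting, since $e^{-tD_{B}^{\ast}D_{B}}$ and $e^{-tD_{B}D_{B}^{\ast}}$ need not individually be trace-class on $L^{2}(\IR^{d},\IC^{r}\otimes H)$. I would circumvent this by exploiting the finite trace of the partial-traced difference supplied by Theorem~\ref{thm:principaltrace} and by approximating the difference through spectral cutoffs away from the gap $[0,\epsilon)$ of $D_{B}^{\ast}D_{B}$ and $D_{B}D_{B}^{\ast}$, on which the intertwining $D_{B}(D_{B}^{\ast}D_{B})^{-1/2}:(\ker D_{B})^{\perp}\to(\ker D_{B}^{\ast})^{\perp}$ makes the spectral cancellation explicit and compatible with $\tr_{\IC^{r}}$.
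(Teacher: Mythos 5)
There is a genuine gap at the very first step. The McKean--Singer identity
\begin{align*}
\Tr_{L^{2}(\IR^{d},H)}\tr_{\IC^{r}}\bigl(e^{-tD_{B}^{\ast}D_{B}}-e^{-tD_{B}D_{B}^{\ast}}\bigr)=\ind D_{B}\quad\text{for all }t>0
\end{align*}
is not justified in this setting and is in general false. The polar-decomposition cancellation of the nonzero spectrum is a statement about the \emph{full} trace on $L^{2}(\IR^{d},\IC^{r}\otimes H)$, and it requires the semigroup difference to be trace class there — which, as emphasized in the introduction and footnotes of the paper, fails in the interesting cases; that is the entire reason the partial trace $\tr_{\IC^{r}}$ is taken first. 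The composite $\Tr_{L^{2}(\IR^{d},H)}\circ\tr_{\IC^{r}}$ is a regularized trace, and the partial isometry $U=D_{B}|D_{B}|^{-1}$ does not commute with $\tr_{\IC^{r}}$ (it mixes the Clifford components), so the nonzero-spectrum contributions need not cancel under it. Your proposed circumvention via spectral cutoffs does not repair this: the cutoff operators restricted away from the kernel are still not trace class in the full space. If your identity were true, the conclusion $\xi_{d,A_{0},B}(\lambda)=-\ind D_{B}\,\lambda^{d-1}/(d-1)!$ would hold for a.e.\ $\lambda>0$ \emph{globally}, which is too strong — by Theorem \ref{thm:pushfunctional}, $\xi$ is a fractional integral of $\eta_{d,A_{0},B}$ and is a polynomial only for very special $\eta$; it would also render the large-$t$ limit in Proposition \ref{prop:heatwitten} vacuous.

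The repair is to localize, which is what the paper does. Fredholmness gives $\delta>0$ with $[0,\delta)\cap(\sigma(D_{B}^{\ast}D_{B})\cup\sigma(D_{B}D_{B}^{\ast}))\subseteq\{0\}$. For $f\in C^{\infty}_{c}((-\delta,\delta))$ the operator $f(D_{B}^{\ast}D_{B})-f(D_{B}D_{B}^{\ast})=f(0)\bigl(P_{\ker D_{B}}-P_{\ker D_{B}^{\ast}}\bigr)$ is finite rank, so the regularized trace coincides with the honest trace and equals $f(0)\ind D_{B}$. Combining with $\int_{0}^{\infty}f^{(d)}\xi=f(0)\ind D_{B}$ gives the distributional equation $T^{(d)}=\ind D_{B}\cdot\delta_{0}$ on $(-\delta,\delta)$ for $T=\xi\,\Id\lambda$, whence $\xi(\lambda)=p(\lambda)-\ind D_{B}\,\lambda^{d-1}_{+}/(d-1)!$ there; the vanishing of $\xi$ on $\IR_{<0}$ kills the polynomial $p$, and one reads off $\xi^{(d-1)}\equiv-\ind D_{B}$ on $(0,\delta)$, which is all that Definition \ref{defn:partialwitten} requires. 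Your Laplace-transform algebra is consistent with this local formula, but the identification can only be made on the spectral gap, not on all of $\IR_{\geq 0}$.
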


\begin{proof}
    Let $\delta>0$, such that $[0,\delta)\cap\left(\sigma(D_{B}^{\ast}D_{B})\cup\sigma(D_{B}D_{B}^{\ast})\right)\subseteq\{0\}$. Let
    \begin{align}
        \xi(\lambda):=\begin{cases}\xi_{d,A_{0},B}(\lambda),&\lambda\geq 0,\\
        0,&\lambda<0,
    \end{cases}
    \end{align}
    and for $f\in C^{\infty}_{c}\left((-\delta,\delta)\right)$ define the distribution
    \begin{align}
        T(f):=\int_{-\delta}^{\delta}f(\lambda)\xi(\lambda)\Id\lambda.
    \end{align}
    Then, for all $f\in C^{\infty}_{c}\left((-\delta,\delta)\right)$,
    \begin{align}
        &f(0)\ind D_{B}=f(0)\left(\dim\ker D_{B}^{\ast}D_{B}-\dim\ker D_{B}D_{B}^{\ast}\right)\nonumber\\
        =&\Tr_{L^2(\IR^{d},H)}\tr_{\IC^{r}}\left(f(D_{B}^{\ast}D_{B})-f(D_{B}D_{B}^{\ast})\right)\nonumber\\
        =&\int_{0}^{\infty}f^{(d)}(\lambda)\xi_{d,A_{},B}(\lambda)\Id\lambda=T\left(f^{(d)}\right)=-T^{(d)}(f),
    \end{align}
    which implies that there exists a polynomial $p$ of degree $d-1$ such that for a.e. $\lambda\in(-\delta,\delta)$,
    \begin{align}
        \xi(\lambda)=p(\lambda)-\ind D_{B}\cdot\begin{cases}
            \frac{\lambda^{d-1}}{(d-1)!},&\lambda\geq 0,\\
            0,&\lambda<0.          
        \end{cases}
    \end{align}
    Since $\xi\equiv 0$ on $\IR_{<0}$, it follows that $p\equiv 0$. In particular $\xi$ is $(d-1)$-times weakly differentiable with
    $\xi^{(d-1)}(\lambda)=-\ind D_{B}$ for a.e. $0<\lambda<\delta$, which proves the claim.
\end{proof}

The functional equation in Theorem \ref{thm:pushfunctional} allows us to impose conditions on the spectral shift function $\eta_{d,A_{0},B}$, which ensure that $\ind_{W}D_{B}$ exists. Moreover we obtain an index formula in this case. This is the second main result of this paper with which we close this chapter. It provides the appropriate analogue for "$A$ needs to be invertible outside a compact region" implied by the Fredholmness condition in the classical Callias' theorem translated to our in general non-Fredholm setup.

\begin{thm}\label{thm:indexformula}
    Assume Hypothesis \ref{hyp:smoothhyp}. Assume that the spectral shift function $\eta_{d,A_{0},B}$ is $\frac{d-1}{2}$-times weakly differentiable with $\eta_{d,A_{0},B}^{(j)}(0)=0$ for all $0\leq j\leq\frac{d-3}{2}$. Then $\xi_{d,A_{0},B}$ is $(d-1)$-times weakly differentiable such that for a.e. $\lambda>0$,
    \begin{align}\label{eq:thm:indexformula:1}
        \xi_{d,A_{0},B}^{(d-1)}(\lambda)=-\frac{1}{\pi}(4\pi)^{-\frac{d-1}{2}}\int_{0}^{\lambda}(\lambda-\mu)^{-\frac{1}{2}}\eta_{d,A_{0},B}^{(\frac{d-1}{2})}(\mu)\Id\mu.
    \end{align}
    If additionally $\eta_{d,A_{0},B}^{(\frac{d-1}{2})}\in e^{tX}L^{1}(\IR)$ for some $t>0$, and the function $u\mapsto u\eta_{d,A_{0},B}^{(\frac{d-1}{2})}(u^{2})$ admits a right Lebesgue point at $0$ with value $L_{d,A_{0},B}$, i.e.
    \begin{align}
        \lim_{h\searrow 0}\int_{0}^{h}\left(u\eta_{d,A_{0},B}^{(\frac{d-1}{2})}(u^{2})-L_{d,A_{0},B}\right)\Id u=0,
    \end{align}
    then also $\xi_{d,A_{0},B}^{(d-1)}$ admits a right Lebesgue point at $0$, and
    \begin{align}\label{eq:thm:indexformula:2}
        \ind_W D_{B}=-\xi_{d,A_{0},B}^{(d-1)}(0+)=(4\pi)^{-\frac{d-1}{2}}L_{d,A_{0},B}.
    \end{align}
\end{thm}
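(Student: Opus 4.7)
My plan is to derive \eqref{eq:thm:indexformula:1} by ``transferring'' the remaining $k:=\tfrac{d-1}{2}$ weak derivatives in Corollary \ref{cor:spectralshiftregular} from $\xi_{d,A_{0},B}$ onto $\eta_{d,A_{0},B}$. Starting from
\begin{align*}
    \xi_{d,A_{0},B}^{(k)}(\lambda)=-\frac{1}{\pi}(4\pi)^{-k}\int_{0}^{\lambda}(\lambda-\mu)^{-1/2}\eta_{d,A_{0},B}(\mu)\,\Id\mu,
\end{align*}
I integrate by parts $k$ times in $\mu$: at step $j$ the boundary contribution at $\mu=\lambda$ vanishes because the exponent on $(\lambda-\mu)$ is strictly positive after the first step, while the contribution at $\mu=0$ is proportional to $\eta_{d,A_{0},B}^{(j-1)}(0)$ and is annihilated by hypothesis for $1\le j\le k$. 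The telescoping product $2\cdot\tfrac{2}{3}\cdot\tfrac{2}{5}\cdots\tfrac{2}{2k-1}=\tfrac{4^{k}k!}{(2k)!}$ yields
\begin{align*}
    \xi_{d,A_{0},B}^{(k)}(\lambda)=-\frac{k!}{\pi^{k+1}(2k)!}\int_{0}^{\lambda}(\lambda-\mu)^{k-1/2}\eta_{d,A_{0},B}^{(k)}(\mu)\,\Id\mu.
\end{align*}
Differentiating a further $k$ times with the Leibniz rule for moving-endpoint integrals (the boundary contribution at $\mu=\lambda$ is zero so long as the exponent stays positive) contributes the Pochhammer factor $\prod_{j=1}^{k}(k-j+\tfrac{1}{2})=\tfrac{(2k)!}{4^{k}k!}$, which exactly cancels the earlier constant and produces \eqref{eq:thm:indexformula:1}.

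For the second assertion I will re-express the Lebesgue average of $\xi_{d,A_{0},B}^{(d-1)}$ on $[0,h]$ as a weighted integral against $g(u):=u\,\eta_{d,A_{0},B}^{(k)}(u^{2})$. The substitution $\mu=u^{2}$ in \eqref{eq:thm:indexformula:1} gives
\begin{align*}
    \xi_{d,A_{0},B}^{(d-1)}(\lambda)=-\frac{2}{\pi}(4\pi)^{-k}\int_{0}^{\sqrt{\lambda}}(\lambda-u^{2})^{-1/2}g(u)\,\Id u,
\end{align*}
and then a further change of variable $\lambda=t^{2}$, Fubini, and the elementary evaluation $\int_{u}^{\sqrt{h}}t(t^{2}-u^{2})^{-1/2}\,\Id t=\sqrt{h-u^{2}}$ give, with $H:=\sqrt{h}$ and the rescaling $w=Hv$,
\begin{align*}
    \frac{1}{h}\int_{0}^{h}\xi_{d,A_{0},B}^{(d-1)}(\lambda)\,\Id\lambda=-\frac{4}{\pi}(4\pi)^{-k}\int_{0}^{1}g(Hv)\sqrt{1-v^{2}}\,\Id v.
\end{align*}

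The last step is a Lebesgue-point analysis of the right-hand side as $H\searrow 0$. Writing $G(h):=\int_{0}^{h}(g(u)-L_{d,A_{0},B})\,\Id u$, which is $o(h)$ by hypothesis, an integration by parts in the error integral $\int_{0}^{1}(g(Hv)-L_{d,A_{0},B})\sqrt{1-v^{2}}\,\Id v$ is legitimate because $\sqrt{1-v^{2}}$ vanishes at $v=1$ and $G(0)=0$, so the boundary contributions cancel; this rewrites the error as $H^{-1}\int_{0}^{1}G(Hv)\,v\,(1-v^{2})^{-1/2}\,\Id v$, which is bounded by $\bigl(\sup_{0<u\le H}|G(u)|/u\bigr)\cdot\int_{0}^{1}v^{2}(1-v^{2})^{-1/2}\,\Id v$ and hence vanishes as $H\searrow 0$. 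Combined with $\int_{0}^{1}\sqrt{1-v^{2}}\,\Id v=\pi/4$, this identifies the limit of the Lebesgue averages as $-\tfrac{4}{\pi}(4\pi)^{-k}\cdot L_{d,A_{0},B}\cdot\tfrac{\pi}{4}=-(4\pi)^{-k}L_{d,A_{0},B}$, which by Definition \ref{defn:partialwitten} equals $\ind_{W}D_{B}$, proving \eqref{eq:thm:indexformula:2}.

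The main technical obstacle is keeping the Lebesgue-point argument sharp: a naive approximation-of-identity style argument would require $g$ to be continuous or to have a bounded mean oscillation near $0$, but the hypothesis only provides a single Lebesgue point. The integration-by-parts trick in the final paragraph is what reduces the question to controlling the primitive $G(h)/h\to 0$, which is exactly what is assumed.
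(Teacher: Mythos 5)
Your proof is correct, and in outline it follows the same route as the paper: start from Corollary \ref{cor:spectralshiftregular}, use the vanishing of $\eta_{d,A_{0},B}^{(j)}(0)$ for $0\le j\le\frac{d-3}{2}$ to transfer the remaining $k=\frac{d-1}{2}$ derivatives onto $\eta_{d,A_{0},B}$, then substitute $\mu=u^{2}$ and identify the Lebesgue average of $\xi_{d,A_{0},B}^{(d-1)}$ over $[0,h]$ with a weighted average of $g(u)=u\eta_{d,A_{0},B}^{(k)}(u^{2})$ near $0$. Two execution details differ, both in your favour. First, the paper obtains \eqref{eq:thm:indexformula:1} by asserting the convolution identity $\bigl(X_{+}^{-1/2}\ast\eta\bigr)^{(k)}=X_{+}^{-1/2}\ast\eta^{(k)}$ once $\eta$ (extended by zero) is $k$-times weakly differentiable; your up-then-down scheme ($k$ integrations by parts raising the exponent to $k-\tfrac12$, then $k$ differentiations of the moving-endpoint integral lowering it back to $-\tfrac12$, with the factors $\tfrac{4^{k}k!}{(2k)!}$ and $\tfrac{(2k)!}{4^{k}k!}$ cancelling) is a legitimate explicit justification of exactly that identity, which is not immediate since $X_{+}^{-1/2}$ itself is not weakly differentiable. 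Second, and more substantively, your Lebesgue-point step is genuinely different: the paper bounds $(h-u^{2})^{1/2}\le h^{1/2}$ and concludes via $\tfrac{1}{\sqrt h}\int_{0}^{\sqrt h}|f(u)|\,\Id u\to 0$, which tacitly invokes the absolute-value form of the Lebesgue-point condition, whereas your integration by parts against $\sqrt{1-v^{2}}$ reduces everything to $G(h)=\int_{0}^{h}(g(u)-L_{d,A_{0},B})\,\Id u=o(h)$, i.e.\ to the averaged form without absolute values --- which is what the hypothesis of the theorem literally states (modulo the missing $\tfrac1h$ normalization in the displayed formula). Your version therefore proves the result under the weaker, literal reading of the hypothesis, at the cost of checking that the boundary terms in that last integration by parts vanish, which you do correctly.
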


\begin{proof}
Let $k:=\frac{d-1}{2}$, $\eta=\eta_{d,A_{0},B}$ on $\IR_{\geq 0}$ and $\eta\equiv 0$ on $\IR_{<0}$, $\xi=\xi_{d,A_{0},B}$ on $\IR_{\geq 0}$ and $\xi\equiv 0$ on $\IR_{<0}$. Finally denote by $X_{+}^{s}(x):=\one_{x>0}x^{s}$, for $x,s\in\IR$. By Corollary \ref{cor:spectralshiftregular} we may express $\xi^{(k)}$ via the convolution
\begin{align}
    \xi^{(k)}=-\frac{1}{\pi}(4\pi)^{-k}X_{+}^{-\frac{1}{2}}\ast\eta.
\end{align}
The boundary conditions on $\eta_{d,A_{0},B}$ ensure that $\eta$ is also $k$-times weakly differentiable. Thus $\xi$ is also $(d-1)$-times weakly differentiable and for a.e. $\lambda>0$,
\begin{align}
    \xi_{d,A_{0},B}^{(d-1)}(\lambda)=-\frac{1}{\pi}(4\pi)^{-k}\left(X_{+}^{-\frac{1}{2}}\ast\eta^{(k)}\right)(\lambda),
\end{align}
which gives (\ref{eq:thm:indexformula:1}), which implies for $h>0$, and $f(u):=\frac{1}{\pi}(4\pi)^{-\frac{d-1}{2}}u\eta_{d,A_{0},B}^{(\frac{d-1}{2})}(u^2)$,
\begin{align}
    &\frac{1}{h}\int_{0}^{h}\xi_{d,A_{0},B}^{(d-1)}(\lambda)\Id\lambda=-\frac{1}{h}\int_{0}^{h}\int_{0}^{\lambda}(\lambda-\mu)^{-\frac{1}{2}}\mu^{-\frac{1}{2}}f(\mu^{\frac{1}{2}})\Id\mu~\Id\lambda\nonumber\\
    =&-\frac{1}{h}\int_{0}^{h}\int_{0}^{\lambda^{\frac{1}{2}}}(\lambda-u^{2})^{-\frac{1}{2}}f(u)\Id u~\Id\lambda=-\frac{2}{h}\int_{0}^{h^{\frac{1}{2}}}(h-u^{2})^{\frac{1}{2}}f(u)\Id u.
\end{align}
Since $\frac{2}{h}\int_{0}^{h^{\frac{1}{2}}}(h-u^{2})^{\frac{1}{2}}\Id u=\pi$, without loss we may assume that $f(0+)=0$. Then
\begin{align}
    \left|\frac{1}{h}\int_{0}^{h}\xi_{d,A_{0},B}^{(d-1)}(\lambda)\Id\lambda\right|\leq\frac{2}{h^{\frac{1}{2}}}\int_{0}^{h^{\frac{1}{2}}}\left|f(u)\right|\Id u\xrightarrow{h\searrow 0}0,
\end{align}
which shows (\ref{eq:thm:indexformula:2}).
\end{proof}

\section{Example and comparison with results in one dimension}

    Before we present a concrete example to which we apply the theory developed in the previous chapter, it is didactically sensible to compare our results, which cover the case $d\geq 3$ odd with known results for the case $d=1$. Indeed, although beyond the scope of this paper, the formula presented in Theorem \ref{thm:indexformula} is compatible with the formula shown in \cite{CGLS} in the case $d=1$. To see that let us pretend that our definition of the spectral shift functions $\eta=\eta_{d,A_{0},B}$ and $\xi=\xi_{d,A_{0},B}$ extends to $d=1$. Let us also assume that $A_{\pm}:=\lim_{x\to\pm\infty}A(x)$ exist strongly on $\dom A_{0}$ with $A_{-}=A_{0}$, and that the Krein spectral shift function $\kappa=\kappa_{A_{+},A_{-}}\in L^{1}_{loc}(\IR)$ of the pair $A_{\pm}$ exists. Then let $A_{\phi}(x):=A_{-}+(1-\phi(x))B(x)$ for $\phi\in C_{c}^{\infty}(\IR)$ with $\phi\equiv 1$ near $0$. In particular $\lim_{x\to\pm\infty}A_{\phi}(x)=A_{\pm}$ on $\dom A_{-}$. We make the following heuristic observations.

    \begin{heur}
        Let $f\in C_{c}^{\infty}((0,\infty))$, and define the bounded smooth function $F:\IR\rightarrow\IC$ by
        \begin{align}
            F(x):=\operatorname{sgn}(x)\int_{0}^{x^{2}}f(y)\frac{\Id y}{2\sqrt{y}},
        \end{align}
        which satisfies $F'(x)=f(x^{2}).$
        Then
        \begin{align}
            &\int_{0}^{\infty}f(x)\frac{\kappa(\sqrt{x})+\kappa(-\sqrt{x})}{2\sqrt{x}}\Id x=\int_{\IR}f(x^{2})\kappa(x)\Id x=\Tr_{H}\left(F(A_{+})-F(A_{-})\right)\nonumber\\
            =&\int_{\IR}\Tr_{H}\p_{x}F(A_{\phi}(x))\Id x=\int_{\IR}\Tr_{H}\left(A_{\phi}'(x)F'(A_{\phi}(x))\right)\Id x\nonumber\\
            =&\int_{\IR}\Tr_{H}\left(A_{\phi}'(x)f(A_{\phi}^{2}(x))\right)\Id x=\int_{0}^{\infty}f(x)\eta(x)\Id x,
        \end{align}
        where in the penultimate line we used a well-known result on differentiation under the trace (we refer to \cite{Pel2} for a rigorous discussion). This shows that for a.e. $x>0$,
    \begin{align}
        \eta(x)=\frac{\kappa(\sqrt{x})+\kappa(-\sqrt{x})}{2\sqrt{x}}.
    \end{align}
    For $g\in C_{c}^{\infty}((0,\infty))$, and $G(x):=\int_{0}^{\infty}g(y)\Id y$ we have
    \begin{align}
        \int_{0}^{\infty}g(x)\xi(x)\Id x=\Tr_{L^{2}(\IR,H)}\left(G(D_{B}^{\ast}D_{B})-G(D_{B}D_{B}^{\ast})\right)=\int_{0}^{\infty}g(x)K(x)\Id x,
    \end{align}
    where $K=K_{D_{B}^{\ast}D_{B},D_{B}D_{B}^{\ast}}$ is the Krein spectral shift function of the pair\\
    $(D_{B}^{\ast}D_{B},D_{B}D_{B}^{\ast})$. It follows that $\xi=K$ holds a.e..

    The functional equation of Theorem \ref{thm:pushfunctional} then gives for a.e. $\lambda>0$,
    \begin{align}
        K(\lambda)=-\frac{1}{\pi}\int_{0}^{\lambda}(\lambda-\mu)^{-\frac{1}{2}}\frac{\kappa(\sqrt{\mu})+\kappa(-\sqrt{\mu})}{2\sqrt{\mu}}\Id\mu=-\frac{1}{\pi}\int_{-\sqrt{\lambda}}^{\sqrt{\lambda}}(\lambda-\mu^{2})^{-\frac{1}{2}}\kappa(\mu)\Id\mu
    \end{align}
    which is the principal functional equation of spectral shift functions found in \cite{Push},\cite{GLMST} and \cite{CGLS}.

    Moreover we have for $u>0$
    \begin{align}
        u\eta(u^{2})=\frac{\kappa(u)+\kappa(-u)}{2},
    \end{align}
    which in regard of Theorem \ref{thm:indexformula} implies that $D_{B}$ admits a (partial) Witten index $\ind_{W}D_{B}$ if
    $\frac{\kappa(u)+\kappa(-u)}{2}$ admits a (right) Lebesgue point at $0$ with value $L$, and that in that case we have the index formula
    \begin{align}
        \ind_{W}D_{B}=L.
    \end{align}
    This formula is a slight generalization of the Witten index formulas shown in \cite{Push},\cite{GLMST}, and \cite{CGLS} (of which the last paper listed gives the most general version), since $\kappa$ admitting a left and right Lebesgue point at $0$ implies that $u\mapsto\frac{\kappa(u)+\kappa(-u)}{2}$ admits a Lebesgue point at $0$.
    \end{heur}

    To illustrate our results from the previous chapter, let us conclude this paper with a non-trivial example. We will consider the so-called $(d+1)$-massless Dirac-Schr\"odinger operator, of which the trace formula in Theorem \ref{thm:principaltrace} was discussed in \cite{Furst3} and we will determine both spectral shift functions $\xi_{d,A_{0},B}$ and $\eta_{d,A_{0},B}$ in this case. It should be noted that $(1+1)$-massless Dirac-Schr\"odinger operators where first discussed in \cite{BolGes}, and provided the first examples of differential operators which attain any real number as Witten index. Such a surjectivity result also holds for the partial Witten index in higher dimensions, which has been shown in \cite{Furst3}.

    \begin{defn}\label{defn:diracschroedinger}
    Let $d\geq 3$ be odd, and $G$ a separable complex Hilbert space. Let $V\in C_{b}^{4}\left(\IR^{d}\times\IR,B_{sa}\left(G\right)\right)$, where $B_{sa}\left(G\right)$ is equipped with the strong operator topology. Furthermore assume that there exists $\epsilon>0$ such that for $i\in\left\{1,\ldots,d\right\}$, $k\in\left\{0,1,2\right\}$, and $\gamma\in\IN^{d}$ with $0\leq|\gamma|\leq 2$,
		\begin{align}
			&\left\|y\mapsto\p_{x^{i}}\p_{y}^{k}V\left(x,y\right)\right\|_{L^{d}\left(\IR,S^{d}\left(G\right)\right)}+\left\|y\mapsto\p_{x}^{\gamma}\p_{x^{i}}V\left(x,y\right)\right\|_{L^{d}\left(\IR,S^{d}\left(G\right)\right)}\nonumber\\
            =&O\left(\left|x\right|^{-1}\right),\ \left|x\right|\to\infty,\nonumber\\
			&\left\|y\mapsto\p_{R}\p_{y}^{k}V\left(x,y\right)\right\|_{L^{d-\epsilon}\left(\IR,S^{d-\epsilon}\left(G\right)\right)}+\left\|y\mapsto\p^{\gamma}_{x}\p_{R}V\left(x,y\right)\right\|_{L^{d-\epsilon}\left(\IR,S^{d-\epsilon}\left(G\right)\right)}\nonumber\\
            =&O\left(\left|x\right|^{-1-\epsilon}\right),\ \left|x\right|\to\infty,
		\end{align}
		where $\p_{R}=\sum_{i=1}^{d}\frac{x^{i}}{\left|x\right|}\p_{x^{i}}$ is the radial derivative. We also assume that for any $\widehat{x}\in S_{1}\left(0\right)$, $x_{0}\in\IR^{d}$, and $\gamma\in\IN_{0}^{d}$ with $\left|\gamma\right|\leq 1$, we have
		\begin{align}
			\lim_{R\to\infty}R^{\left|\gamma\right|}\left\|\left(\p^{\gamma}_{x}V\right)\left(x_{0}+R\widehat{x},\cdot\right)-\left(\p^{\gamma}_{x}V\right)\left(R\widehat{x},\cdot\right)\right\|_{L^{\infty}\left(\IR,B\left(G\right)\right)}=0.
		\end{align}
        Define the \textit{$(d+1)$- massless Dirac-Schr\"odinger operator with potential $V$} for $f\in W^{1,2}\left(\IR^{d+1},\IC^{r}\otimes G\right)$, $x\in\IR^{d}$, and $y\in\IR$ by
        \begin{align}
		\left(D_{V}f\right)\left(x,y\right):=\Ii\sum_{j=1}^{d}c^{j}\p_{x^{j}}f\left(x,y\right)+\Ii\p_{y}f\left(x,y\right)+V\left(x,y\right)f\left(x,y\right),
	\end{align}
    where $r=2^{\frac{d-1}{2}}$ is the rank of the Clifford matrices $(c^{j})_{j=1}^{d}$.
    \end{defn}

    For such an operator we have the following result.

    \begin{thm}\label{thm:concreteexample}
    Assume that $V$ and $D_{V}$ are as in Definition \ref{defn:diracschroedinger}. Then Hypothesis \ref{hyp:smoothhyp} holds with $H=L^2(\IR,G)$, $A_{0}=\Ii\p_{y}$, $B(x)=M_{V(x,\cdot)}$ for $\alpha>1$ small enough such that $N=1$. Let the family of unitaries $U^{V}$ be given by $U^{V}\left(x\right):=\lim_{n\to\infty}U^{V\left(x,\cdot\right)}\left(n,-n\right)$, $x\in\IR^{d}$, where $U^{T}\left(y_{1},y_{2}\right)$, $y_{1},y_{2}\in\IR$, for a given $T:\IR\rightarrow B_{sa}\left(H\right)$ is the (unique) evolution system of the equation
		\begin{align}
			u'\left(y\right)=\Ii T\left(y\right) u\left(y\right),\ y\in\IR,
		\end{align}
		i.e. for $y,y_{1},y_{2},y_{3}\in\IR$,
		\begin{align}
			\p_{y_{1}}U^{T}\left(y_{1},y_{2}\right)=&\Ii T\left(y_{1}\right)U^{T}\left(y_{1},y_{2}\right),\nonumber\\
			\p_{y_{2}}U^{T}\left(y_{1},y_{2}\right)=&-\Ii U^{T}\left(y_{1},y_{2}\right)T\left(y_{2}\right),\nonumber\\
			U^{T}\left(y_{1},y_{2}\right)U^{T}\left(y_{2},y_{3}\right)=&U^{T}\left(y_{1},y_{3}\right),\nonumber\\
			U^{T}\left(y,y\right)=&1_{G}.
		\end{align}
        Then $\eta_{d,A_{0},B}\in\langle X\rangle L^{1}(\IR_{\geq 0})$, and $\xi_{d,A_{0},B}\in\langle X\rangle^{\frac{d}{2}+1}L^{1}(\IR_{\geq 0})$ exist with
        \begin{align}
            \eta_{d,A_{0},B}(\mu)=&\int_{\IR^{d}}\Omega_{d}(\mu,z)\ind_{V}(z)\Id z,\nonumber\\
            \xi_{d,A_{0},B}(\lambda)=&\int_{\IR^{d}}\Sigma_{d}(\lambda,z)\ind_{V}(z)\Id z,
        \end{align}
        where for $\lambda,\mu>0$, $z\in\IR^{d}$,
        \begin{align}
            \ind_{V}(z):=&\frac{2}{d}(4\pi)^{-\frac{d}{2}}\left(2\Ii\right)^{\frac{d-1}{2}}\int_{x\in\IR^{d}}\Tr_{G}\left(\prod_{j=1}^{d}(\Id_{x}V)(x,z_{j-1})U^{V(x,\cdot)}(z_{j-1},z_{j})\right),
            \end{align}
            \begin{align}
            \Omega_{d}(\mu,z):=&\frac{1}{2}\int_{s\in\Delta_{d-1}}\left(\prod_{j=1}^{d}s_{j}^{-\frac{1}{2}}\right)\left(\frac{\mu}{a(s,z)}\right)^{\frac{d}{4}-\frac{1}{2}}J_{\frac{d}{2}-1}\left(2\sqrt{a(s,z)\mu}\right)\Id s,\nonumber\\
            \Sigma_{d}(\lambda,z):=&-(4\pi)^{-\frac{d}{2}}\int_{s\in\Delta_{d-1}}\left(\prod_{j=1}^{d}s_{j}^{-\frac{1}{2}}\right)\left(\frac{\lambda}{a(s,z)}\right)^{\frac{d-1}{2}}J_{d-1}\left(2\sqrt{a(s,z)\lambda}\right)\Id s,\nonumber\\
            a(s,z):=&\sum_{j=1}^{d}\frac{(z_{j-1}-z_{j})^{2}}{4s_{j}},~ s\in\Delta_{d-1},
        \end{align}
        with convention $z_{0}:=z_{d}$, and where $J_{\nu}$ denote Bessel functions of the first kind. Moreover $\xi_{d,A_{0},B}$ is $(d-1)$-times differentiable and $\xi_{d,A_{0},B}^{(d-1)}$ continuous from the right at $0$. In particular $D_{V}$ admits a partial Witten index and
        \begin{align}
            \ind_{W}D_{V}&=-\xi^{(d-1)}(0+)=\left(2\pi\Ii\right)^{-\frac{d+1}{2}}\frac{\left(\frac{d-1}{2}\right)!}{d!}\int_{\IR^{d}}\Tr_{G}\left(\left(U^{V}\right)^{-1}\Id U^{V}\right)^{\wedge d},
        \end{align}
        where $\wedge$ denotes the exterior product.
    \end{thm}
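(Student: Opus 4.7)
The plan is to work through the example in four stages: verify Hypothesis \ref{hyp:smoothhyp} in the concrete setup, evaluate the right-hand side of the principal trace formula of Theorem \ref{thm:principaltrace} in terms of the propagator $U^V$, invert a Laplace transform to extract the two spectral shift functions, and apply Theorem \ref{thm:indexformula} to compute $\ind_W D_V$. Stage 1 is essentially routine: for $A_0=\Ii\partial_y$ on $H=L^2(\IR,G)$ and $B(x)=M_{V(x,\cdot)}$, the Schatten-von Neumann conditions of Hypothesis \ref{hyp:smoothhyp} reduce to $L^p(\IR,S^p(G))$-decay of derivatives of $V$ via the standard Birman-Solomyak/Cwikel-type bound $\|M_W\langle\Ii\partial_y\rangle^{-1}\|_{S^d(L^2(\IR,G))}\lesssim\|W\|_{L^d(\IR,S^d(G))}$, analogous to \cite{Furst1}[Lemma 2.8]; iterated weights $\langle A_0\rangle^\beta$ are handled by the $y$-derivative hypotheses on $V$. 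Choosing $\alpha>1$ close to $1$ forces $N=1$. The limit $U^V(x)=\lim_{n\to\infty}U^{V(x,\cdot)}(n,-n)$ exists by standard evolution-system theory using the $L^1$-integrability of $\partial_y V$ at infinity.

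For Stage 2, I would follow the computation already carried out in \cite{Furst3}. The heat kernel $e^{-tA_\phi^2(x)}$ has an integral kernel that factorizes as the free Gaussian $(4\pi t)^{-1/2}\exp(-(y-y')^2/(4t))$ twisted by the propagator $U^{V_\phi(x,\cdot)}(y,y')$. Applying $\tr_{\IC^r}$ to the product $\prod_{j=0}^{d-1}(\Ii c\nabla A_\phi)(x)e^{-ts_j A_\phi^2(x)}$ and invoking Lemma \ref{lem:clifford}(2) collapses the Clifford trace to the Levi-Civita factor $(2\Ii)^{(d-1)/2}\epsilon_\gamma$; the remaining $G$-trace is antisymmetrized in the $d$ spatial indices, producing the alternating product of $\p_{x^i}V$ and $U^V$-propagator factors that defines $\ind_V(z)$. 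Carrying out the cyclic Gaussian integration over the $d$ fiber variables (which become the $z_j$'s after collecting displacement differences) yields the exponential factor $e^{-a(s,z)/t}$ with $a(s,z)=\sum_j(z_{j-1}-z_j)^2/(4s_j)$ as in the statement.

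In Stage 3, combining the explicit form of the right-hand side from Stage 2 with Proposition \ref{prop:spectralcallias} and Proposition \ref{prop:leftspectralshift} together with Theorem \ref{thm:pushfunctional} expresses $\mathcal{L}[\eta_{d,A_0,B}](t)$ and $\mathcal{L}[\xi_{d,A_0,B}](t)$ as superpositions of $e^{-a(s,z)/t}$ weighted by simplex factors $\prod s_j^{-1/2}$ and appropriate $t$-powers. Inverting via the classical Hankel-Laplace identity
\begin{align*}
\int_0^\infty e^{-t\mu}(\mu/a)^{\nu/2} J_\nu(2\sqrt{a\mu})\, d\mu = t^{-\nu-1} e^{-a/t},
\end{align*}
used with $\nu=d/2-1$ for $\eta$ and $\nu=d-1$ for $\xi$, together with Fubini to swap the $\mu$- and $z$-integrations (justified by the a priori bounds from Proposition \ref{prop:spectralcallias} and Proposition \ref{prop:leftspectralshift}), produces precisely the claimed kernels $\Omega_d$ and $\Sigma_d$.

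For Stage 4, repeated differentiation of $\Sigma_d(\lambda,z)$ in $\lambda$ via the Bessel identity $\frac{d}{d\lambda}(\lambda^\nu J_\nu(2\sqrt{a\lambda}))=a\lambda^{\nu-1}J_{\nu-1}(2\sqrt{a\lambda})$ shows that $\xi_{d,A_0,B}^{(d-1)}$ is continuous from the right at $0$ with a value proportional to $\int_{\Delta_{d-1}}\prod_j s_j^{-1/2}\,a(s,z)^{(d-1)/2}\,ds$ (using $J_0(0)=1$), which evaluates by Beta-function reduction to a constant times $\int_{\IR^d}\ind_V(z)\,dz$. Applying Theorem \ref{thm:indexformula} then gives $\ind_W D_V$ as a dimensional constant times that integral. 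The final identification with the exterior product uses the identity
\begin{align*}
(U^V(x))^{-1}\p_{x^i}U^V(x)=\Ii\int_\IR U^V(x,\cdot)(+\infty,y)^{-1}(\p_{x^i}V)(x,y)U^V(x,\cdot)(y,-\infty)\,dy,
\end{align*}
obtained by differentiating the defining ODE of $U^{V(x,\cdot)}$ in the parameter $x^i$; antisymmetrizing the resulting product via the Levi-Civita tensor from Stage 2 reconstructs the wedge product $\Tr_G((U^V)^{-1}\Id U^V)^{\wedge d}$. The main obstacle is the constant bookkeeping: aligning the Clifford factor $(2\Ii)^{(d-1)/2}$, the simplex Beta integrals, the $\pi$-powers from the Laplace inversion, and the Bessel values at $0$ to recover the clean final constant $(2\pi\Ii)^{-(d+1)/2}((d-1)/2)!/d!$. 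Each individual identity is classical, but their combined use requires care.
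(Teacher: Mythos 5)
Your overall route coincides with the paper's: Hypothesis \ref{hyp:smoothhyp} is verified by adapting \cite{Furst3}[Lemma 2.5], the right-hand side of Theorem \ref{thm:principaltrace} is taken in the form $\int_{\IR^{d}}\omega_{d}(t,z)\ind_{V}(z)\Id z$ from \cite{Furst3}[Theorem 3.5], both spectral shift functions are extracted by inverting the Laplace transform with the Hankel--Schl\"afli identity ($\nu=\frac{d}{2}-1$ for $\eta$, $\nu=d-1$ for $\xi$), and the index is read off from the value of $\xi^{(d-1)}$ at $0+$. One remark on structure: once you have shown that $\xi_{d,A_{0},B}^{(d-1)}$ is right-continuous at $0$, the detour through Theorem \ref{thm:indexformula} is unnecessary (and the paper does not take it); the partial Witten index is then given directly by Definition \ref{defn:partialwitten} together with Proposition \ref{prop:heatwitten}.

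There is, however, a concrete error in Stage 4. The Bessel recursion you quote, $\frac{\Id}{\Id\lambda}\left(\lambda^{\nu}J_{\nu}(2\sqrt{a\lambda})\right)=a\lambda^{\nu-1}J_{\nu-1}(2\sqrt{a\lambda})$, is false; the correct identity, which follows from $\frac{\Id}{\Id x}\left(x^{\nu}J_{\nu}(x)\right)=x^{\nu}J_{\nu-1}(x)$ under the substitution $x=2\sqrt{a\lambda}$, is
\begin{align*}
\frac{\Id}{\Id\lambda}\left[\left(\frac{\lambda}{a}\right)^{\frac{\nu}{2}}J_{\nu}\left(2\sqrt{a\lambda}\right)\right]=\left(\frac{\lambda}{a}\right)^{\frac{\nu-1}{2}}J_{\nu-1}\left(2\sqrt{a\lambda}\right),
\end{align*}
i.e.\ the powers of $a$ are absorbed exactly at each step. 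Consequently
\begin{align*}
\p_{\lambda}^{d-1}\Sigma_{d}(\lambda,z)=-(4\pi)^{-\frac{d}{2}}\int_{s\in\Delta_{d-1}}\Bigl(\prod_{j=1}^{d}s_{j}^{-\frac{1}{2}}\Bigr)J_{0}\left(2\sqrt{a(s,z)\lambda}\right)\Id s,
\end{align*}
whose value at $\lambda=0+$ is $-(4\pi)^{-\frac{d}{2}}\int_{\Delta_{d-1}}\prod_{j}s_{j}^{-\frac{1}{2}}\Id s$, \emph{independent of $z$}. Your claimed limiting value, proportional to $\int_{\Delta_{d-1}}\prod_{j}s_{j}^{-\frac{1}{2}}\,a(s,z)^{\frac{d-1}{2}}\Id s$, carries a spurious $z$-dependence; if carried through literally it would not factor as a dimensional constant times $\int_{\IR^{d}}\ind_{V}(z)\Id z$, and the final formula for $\ind_{W}D_{V}$ would not emerge. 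With the corrected recursion the $s$-integral reduces to the Beta-type constant $\int_{\Delta_{d-1}}\prod_{j}(4\pi s_{j})^{-\frac{1}{2}}\Id s=(4\pi)^{-\frac{1}{2}}\frac{\left(\frac{d-1}{2}\right)!}{(d-1)!}$ and the bookkeeping closes as in the paper.
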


    \begin{proof}
        In \cite{Furst3}[Lemma 2.5] it is shown that under the made assumptions, Hypothesis \ref{hyp:calliashyp} holds for $\alpha>1$ arbitrarily small such that one may choose $N=1$. Repeating the proof of \cite{Furst3}[Lemma 2.5] almost ad verbatim, the stronger requirement of Hypothesis \ref{hyp:smoothhyp} is also satisfied under the stronger requirements made in Definition \ref{defn:diracschroedinger}.
        In \cite{Furst3}[Theorem 3.5] it is shown that Theorem \ref{thm:principaltrace} holds with $D_{B}=D_{V}$, and that we may even choose $\phi\equiv 0$. In the proof of \cite{Furst3}[Theorem 3.5] it is moreover shown that for $A\left(x\right)=\Ii\p_{y}+M_{V\left(x,\cdot\right)}$,
        \begin{align}\label{eq:thm:concreteexample:1}
    &\frac{2}{d}(4\pi)^{-\frac{d}{2}}t^{\frac{d}{2}}\int_{\IR^{d}}\int_{s\in\Delta_{d-1}}\Tr_{L^2(\IR,\IC^{r}\otimes G)}\left(\prod_{j=0}^{d-1}(\Ii c\nabla A)(x)e^{-ts_{j}A^{2}(x)}\right)\Id s~\Id x\nonumber\\
    &=\int_{\IR^{d}}\omega_{d}(t,z)\ind_{V}(z)\Id z.
        \end{align}
        We re-express the left hand side of (\ref{eq:thm:concreteexample:1}) via the spectral shift function $\eta=\eta_{d,A_{0},B}$, and thus obtain for its Laplace transform $\mathcal{L}(\eta)$,
        \begin{align}\label{eq:thm:concreteexample:2}
            \mathcal{L}(\eta)(t)&=\frac{1}{2}(4\pi)^{\frac{d}{2}}t^{-\frac{d}{2}}\int_{\IR^{d}}\omega_{d}(t,z)\ind_{V}(z)\Id z\nonumber\\
            &=\frac{1}{2}\int_{\IR^{d}}\int_{s\in\Delta_{d-1}}\left(\prod_{j=1}^{d}s_{j}^{-\frac{1}{2}}\right)\left[t^{-\frac{d}{2}}\exp\left(-\frac{a(s,z)}{t}\right)\right]\Id s\ind_{V}(z)\Id z
        \end{align}
        A well-known result about Bessel functions (Schl\"afli's integral \cite{Wat}[§6.2 (1)] combined with Laplace transform inversion) is
        \begin{align}\label{eq:thm:concreteexample:3}
            \mathcal{L}\left(\mu\mapsto\left(\frac{\mu}{a}\right)^{\frac{d}{4}-\frac{1}{2}}J_{\frac{d}{2}-1}(2\sqrt{a\mu})\right)(t)=t^{-\frac{d}{2}}\exp\left(-\frac{a}{t}\right),~ a,t>0,
        \end{align}
        which allows us to rewrite the term in brackets $[,]$ in (\ref{eq:thm:concreteexample:2}) and apply Laplace transform inversion to obtain for $\mu>0$,
        \begin{align}
            \eta(\mu)=&\int_{\IR^{d}}\Omega_{d}(\mu,z)\ind_{V}(z)\Id z.
        \end{align}
        To obtain a formula for $\xi=\xi_{d,A_{0},B}$, we could use the functional equation from Theorem \ref{thm:pushfunctional}, however it is simpler to use Laplace transform inversion again. Indeed, (\ref{eq:thm:concreteexample:1}) together with Theorem \ref{thm:principaltrace} implies
        \begin{align}
            &\mathcal{L}(\xi)(t)=-t^{-d}\int_{\IR^{d}}\omega_{d}(t,z)\ind_{V}(z)\Id z\nonumber\\
            =&-(4\pi)^{-\frac{d}{2}}\int_{\IR^{d}}\int_{s\in\Delta_{d-1}}\left(\prod_{j=1}^{d}s_{j}^{-\frac{1}{2}}\right)\left[t^{-d}\exp\left(-\frac{a(s,z)}{t}\right)\right]\Id s\ind_{V}(z)\Id z,
        \end{align}
        which by (\ref{eq:thm:concreteexample:3}) gives
        \begin{align}
            \xi(\lambda)=&\int_{\IR^{d}}\Sigma_{d}(\lambda,z)\ind_{V}(z)\Id z.
        \end{align}
        The function $\xi$ is $(d-1)$-times differentiable with
        \begin{align}
            \xi^{(d-1)}(\lambda)=&\int_{\IR^{d}}\p_{\lambda}^{d-1}\Sigma_{d}(\lambda,z)\ind_{V}(z)\Id z,\nonumber\\
            \p_{\lambda}^{d-1}\Sigma_{d}(\lambda,z)=&-(4\pi)^{-\frac{d}{2}}\int_{s\in\Delta_{d-1}}\left(\prod_{j=1}^{d}s_{j}^{-\frac{1}{2}}\right)J_{0}\left(2\sqrt{a(s,z)\lambda}\right)\Id s.
        \end{align}
        In particular, $\xi^{(d-1)}$ is right continuous at $0$ with value
        \begin{align}
            \ind_{W}D_{V}&=-\xi^{(d-1)}(0+)=(4\pi)^{-\frac{d}{2}}\left(\int_{s\in\Delta_{d-1}}\prod_{j=1}^{d}s_{j}^{-\frac{1}{2}}\Id s\right)\left(\int_{\IR^{d}}\ind_{V}(z)\Id z\right)\nonumber\\
            &=\left(2\pi\Ii\right)^{-\frac{d+1}{2}}\frac{\left(\frac{d-1}{2}\right)!}{d!}\int_{\IR^{d}}\Tr_{G}\left(\left(U^{V}\right)^{-1}\Id U^{V}\right)^{\wedge d},
        \end{align}
        where we used that
        \begin{align}
			\int_{s\in\Delta_{d-1}}\prod_{j=1}^{d}\left(4\pi s_{j}\right)^{-\frac{1}{2}}\Id u=\left(4\pi\right)^{-\frac{1}{2}}\frac{\left(\frac{d-1}{2}\right)!}{\left(d-1\right)!},
		\end{align}
        and according to the proof of \cite{Furst3}[Theorem 3.5],
        \begin{align}
            \int_{\IR^{d}}\ind_{V}(z)\Id z=\frac{2}{d}(4\pi)^{-\frac{d}{2}}\left(2\Ii\right)^{\frac{d-1}{2}}\Ii^{-d}\int_{x\in\IR^{d}}\Tr_{G}\left(\left(U^{V}\left(x\right)\right)^{-1}\left(\Id U^{V}\right)\left(x\right)\right)^{\wedge d}.
        \end{align}
    \end{proof}

    \begin{rem}
        The Witten index formula in \ref{thm:concreteexample} is already a result in \cite{Furst3}, which was shown by calculating the trace limit in Proposition \ref{prop:heatwitten} directly. Here we go beyond that result, because we also calculated the complete spectral shift functions $\xi_{d,A_{0},B}$ and $\eta_{d,A_{0},B}$ in Theorem \ref{thm:concreteexample}.
    \end{rem}

\section{Acknowledgements}

I would like to thank Matthias Lesch for his advice, and Jilly Kevo for proofreading the texts in this paper.

\bibliographystyle{amsplain}
\bibliography{References}

\bigskip
\noindent
Oliver F\"{u}rst\\
\href{mailto:ofuerst@math.uni-bonn.de}{\Letter ~ofuerst@math.uni-bonn.de}
\\
\noindent
Mathematisches Institut\\
Universit\"{a}t Bonn\\
Endenicher Allee 60\\
53115 Bonn\\
Germany

% ------------------------------------------------------------------------
\end{document}